\theoremstyle{definition}
\newtheorem{df}{Definition}[section]
\newtheorem{thm}[df]{Theorem}
\newtheorem{lem}[df]{Lemma}
\newtheorem{cor}[df]{Corollary}
\newtheorem{prop}[df]{Proposition}
\newtheorem{rem}[df]{Remark}
\newtheorem{eg}[df]{Example}
\newtheorem{nt}[df]{Notation}
\newtheorem{claim}{Claim}
\newcommand\resetclaim{\setcounter{claim}{0}}
\newcommand\mc{\mathclose}
\newcommand\ph{\varphi}
\newcommand\rest{\upharpoonright}
\newcommand\ra{\rightarrow}
\newcommand\iso{{\simeq}}
\newcommand\id{{\rm id}}
\newcommand\sm{\setminus}
\newcommand\N{\mathbb{N}}
\newcommand\Con{{{\rm Con}}}
\newcommand\Clo{{\rm Clo}}
\newcommand\Inv{{\rm Inv}}
\newcommand\Pol{{\rm Pol}}
\newcommand\J{{\rm J}}
\newcommand\M{{\rm M}}
\newcommand\eps{\varepsilon}
\newcommand\Ess{{\rm Ess}}
\newcommand\Ir{{\rm Ir}}
\title{A ``classification" of congruence primal arithmetical algebras}
\author{Shohei Izawa%\footnote{sa9m02@math.tohoku.ac.jp}
}
\date{}
\begin{document}
\maketitle
\begin{abstract}
We classify essential algebras whose irredundant non-refinable covers consist of primal algebras.
The proof is obtained by constructing one to one
correspondence between such algebras and partial orders on finite sets.
Further, we prove that for a finite algebra, it has an irredundant non-refinable cover consists of primal algebras if and only if
it is the both congruence primal and arithmetical.
Finally, we obtain combinatorial description of congruence primal arithmetical algebras.
\end{abstract}

%%%%%%%%%%%%%%%%%%%%%%%%%
\section{Introduction.}
\label{s-introduction}
%%%%%%%%%%%%%%%%%%%%%%%%%%

Relational structure theory is a branch of universal algebra, particularly studying structure of finite algebras.
In relational structure theory, we consider decomposition of algebras into a family of smaller algebras, which is called an irredundant non-refinable cover.
We may aim to classify finite algebras by considering the converse process, composition of ``irreducible algebras".
As pointed out in \cite{Iza}, the composition process is divided into two parts:
\begin{enumerate}
\item
For a given family ${\cal U}$ of irreducible algebras, classifying matrix product $A$ of ${\cal U}$. (Such algebras are said to be essential.)
\item
For each essential algebra $A$, classifying algebras categorically equivalent to $A$.
\end{enumerate}

Categorically equivalent algebras have very many common properties and
structures, and hence, obtaining classification of algebras compatible with categorical structure is an important task.

In this article, we consider the case that the family ${\cal U}$ of irreducible algebras consists of several two-element primal algebras.
Primality is a condition that means categorical structure is ``the simplest".
In this sense, the topic of this paper may be said a study of
the easiest case of application of relational structure theory to
classification of algebras.

In Section \ref{s-matrix-product-primal}, we consider Step (1) and
in Section \ref{s-cong-primal-arith}, we consider Step (2).
In the both steps, we obtain some ``classification" of objective algebras.
Concretely, in Section \ref{s-matrix-product-primal}, we show essential algebras constructed from primal algebras are bijectively corresponded to finite posets.
In Section \ref{s-cong-primal-arith}, we show ``inductive description" of finite algebras that have irredundant non-refinable covers that consist of primal algebras.

We also show in Section \ref{s-cong-primal-arith} that a finite algebra $A$ has an irredundant non-refinable cover that consists of primal algebras if and only if the algebra $A$ satisfies two algebraic conditions; congruence primality and arithmeticity.
Thus the main result of Section \ref{s-cong-primal-arith} also gives a classification of congruence primal arithmetical algebras.

%%%%%%%%%%%%%%%%%%%%%%%%%%%%
\section{Preliminaries.}\label{s-preliminaries}
%%%%%%%%%%%%%%%%%%%%%%%%%%%%
In this section, we quickly review basic definitions used in the following sections.
In this paper, we write $A\subset B$ the statement
$\forall x; x\in A\Rightarrow x\in B$.
The statement $A\subset B$ and $A\neq B$ is written by $A\subsetneq B$.
We write $\N$ the set of non-negative integers.
We write $|X|$ the cardinality of a set $X$.
We write ${\cal P}(X)$ the power set of a set $X$, namely, ${\cal P}(X):=\{Y\mid Y\subset X\}$.
We also use the following notations.

\begin{nt}
For sets $A$ and $I$, we write $A^I$ the set of all maps $I\ra A$.
The case $I=\{1,\dots,n\}$, we simply write $A^n$ instead of $A^I$.

We write ${\cal O}_A^{(n)}$ the set $A^{A^n}$ of $n$-ary operations on $A$
and ${\cal O}_A:=\bigcup_{n\in\N}{\cal O}_A^{(n)}$.
${\cal O}^{(n)}_{\{0,\dots,m-1\}}$ and ${\cal O}_{\{0,\dots,m-1\}}$ are simply written by ${\cal O}^{(n)}_m$ and ${\cal O}_m$ respectively.
\end{nt}

\begin{nt}
Let $A$ be a set. We write $\Delta^A$ the \emph{diagonal relation}
$\{(a,a)\in A^2\mid a\in A\}$.
For a set $I$ and an equivalence relation ${\cal E}$
on $I$, we define
\[
\Delta_{{\cal E}}^A:=\{(a_i)_{i\in I}\in A^I\mid (i,j)\in {\cal E}\Rightarrow a_i=a_j\}.
\]
The case the underlying set $A$ is clear from the context, we may omit the superscript $A$.
\end{nt}

\begin{nt}
Let $n,m$ be non-negative integers, $A_1,\ldots,A_n$ be sets and $r_i\subset A_i^m$ for $i\in \{1,\ldots,n\}$.
Then we write $\prod_{i=1}^n r_i$
the $m$-ary relation on the product set $\prod_{i=1}^n A_i$ that consists of all elements 
\[
\begin{pmatrix}
\begin{pmatrix} x_{11}\\ \vdots \\ x_{n1}\end{pmatrix}
,\ldots,
\begin{pmatrix} x_{1m}\\ \vdots \\ x_{nm}\end{pmatrix}
\end{pmatrix}
\in \left(\prod_{i=1}^n A_i\right)^m 
\]
that satisfy $(x_{i1},\ldots,x_{im})\in r_i$ for all $i\in\{1,\ldots,n\}$.
\end{nt}

\subsection{Preliminaries from clone theory}
%%%%%%%%%%%%%%%%%%%%%%%%%%%%%%%
In this subsection, we fix notions of clones, algebras and related concepts.

\begin{df}
Let $A$ be a set.
A set $C$ of operations on $A$,
namely $C\subset {\cal O}_A$, is said to be
a(n operational) \emph{clone} on $A$ 
if the following conditions hold:
\begin{enumerate}
\item
For each $n\in\N$ and $i\in\{1,\dots,n\}$, the $i$-th projection $(a_1,\dots,a_n)\mapsto a_i$ belongs to $C$.
\item
For each $n,m\in\N$, $f\in C\cap{\cal O}_A^{(n)}$ and
$g_i\in C\cap {\cal O}_A^{(m)}$ for $i\in\{1,\dots,n\}$, the composition
\[
f\circ (g_i)_{i=1}^n:(a_j)_{j=1}^m\mapsto 
f(g_i(a_j)_{j=1}^m)_{i=1}^n
\]
belongs to $C$.
\end{enumerate}
\end{df}

\begin{df}
Let $A$ be a set.
A set $R$ of relations on $A$,
that is, $R\subset \bigcup_{n\in\N}{\cal P}(A^{n})$, is said to be
a \emph{relational clone} on $A$
if the following conditions hold:
\begin{itemize}
\item
$\Delta^A\in R$.
\item
If $\{r_k\}_{k=1}^K\subset R$, $r_k\subset A^{n_k}$ and
a relation $r\subset A^{m}$ is described as in the form
\[
r=
\left\{(a_{j})_{j=1}^m\in A^{m}\left| 
  \exists (a_{j})_{j=m+1}^N 
  \bigwedge_{k=1}^K (a_{f_k(j)})_{j=1}^{n_k}\in r_{k}\right.\right\},
\]
where and $f_k:\{1,\dots,n_k\}\ra \{1,\dots,N\}$ and $N\in\N$, then $r\in R$ holds. (This condition is referred as ``$r$ is defined from $\{r_k\}_{k=1}^K$
by primitive positive definition.")
\end{itemize}
\end{df}

\begin{df}
Let $A$ be a set and $n,m\in\N$.
\begin{enumerate}
\item
Let $F$ be a set of operations on $A$.
An $m$-ary relation $r$ on $A$ is said to be \emph{invariant} to $F$ if
\[
\forall i\in\{1,\dots,n\}; (a_{ij})_{j=1}^m\in r
\ \  \Longrightarrow\ \ 
(f(a_{ij})_{i=1}^n)_{j=1}^m\in r
\]
hold for all $f\in F$, where $n$ is the arity of $f$.
The set of all invariant relations of $F$ is denoted by $\Inv(F)$.
We define $\Inv_{m}(F):=\Inv(F)\cap {\cal P}(A^m)$.
\item
Let $R$ be a set of relations on $A$.
An $n$-ary operation $f$ on $A$ is said to be a \emph{polymorphism} of $R$ if
\[
\forall i\in\{1,\dots,n\}; (a_{ij})_{j=1}^m\in r
\ \  \Longrightarrow\ \ 
(f(a_{ij})_{i=1}^n)_{j=1}^m\in r
\]
hold for all $r\in R$, where $m$ is the arity of $r$.
The set of all polymorphisms of $R$ is denoted by $\Pol(R)$.
We define $\Pol_{n}(R):=\Pol(R)\cap {\cal O}_A^{(n)}$.
\end{enumerate}
\end{df}

%\begin{rem}
%For simplifying description, we also use notations such as $\Inv_X(C)$,
%where $C$ is a set of operations and $X$ is an arbitrary  finite set.
%That is defined as
%\[
%\Inv_X(C):=\{r\in {\cal P}(A^X)\mid 
%  \{(a_i)_{i=1}^{|X|}\mid (a_{\ph(x)})_{x\in X}\in r\}\in \Inv_{|X|}(C)\},
%\]
%where $\ph$ is a bijection $X\ra \{1,\dots,|X|\}$.

%Similarly, the set of set-indexed polymorphisms is defined as
%\[
%\Pol_X(R):=\{f:A^X\ra A\mid
% [(a_i)_{i=1}^{|X|}\mapsto f(a_{\ph(x)})_{x\in X}]\in \Pol_{|X|}(R)\}.
%\]
%Note that these notions do not depend on the choice of bijection $\ph$.
%\end{rem}

It is known that there is a natural one to one correspondence between clones and relational clones on a fixed finite set. The correspondence is given by $\Pol$ and $\Inv$ defined above.
\begin{thm}[\cite{BKKR},\cite{Gei}]\label{pol-inv-finite}
Let $A$ be a finite set.
\begin{enumerate}
\item
For any set $F$ of operations on $A$, $\Inv(F)$ is a relational clone on $A$.
\item
For any set $H$ of relations on $A$, $\Pol(H)$ is a clone on $A$.
\item
For a clone $C$ on $A$, $\Pol(\Inv(C))=C$ holds.
\item
For a relational clone $R$ on $A$, $\Inv(\Pol(R))=R$ holds.
\end{enumerate}
\end{thm}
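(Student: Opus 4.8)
The statement is the classical Galois correspondence, so the plan is to treat the four items in two groups. Items (1) and (2), together with the ``easy'' halves of (3) and (4), are direct verifications. That $\Inv(F)$ is a relational clone amounts to checking that $\Delta^A$ is preserved by every operation and that coordinatewise application of a fixed $f\in F$ commutes with conjunction and with existential projection, so that a relation defined from invariant relations by a primitive positive definition is again invariant; dually, $\Pol(H)$ contains all projections and is closed under composition, giving a clone. For the equalities, the inclusions $C\subseteq\Pol(\Inv(C))$ and $R\subseteq\Inv(\Pol(R))$ hold by the very definitions of $\Inv$ and $\Pol$. Thus the whole content is the two reverse inclusions $\Pol(\Inv(C))\subseteq C$ and $\Inv(\Pol(R))\subseteq R$, and it is here that finiteness of $A$ is used.

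For (3), let $f\in\Pol(\Inv(C))$ be $n$-ary. Put $m:=|A|^n$ and enumerate $A^n=\{e_1,\dots,e_m\}$. I would form the $m$-ary relation
\[
\Gamma:=\{(g(e_1),\dots,g(e_m))\mid g\in C\cap{\cal O}_A^{(n)}\}\subseteq A^m,
\]
where $g(e_j)$ denotes the value of $g$ at the point $e_j\in A^n$. Because $C$ is a clone, $\Gamma$ is closed under applying operations of $C$ coordinatewise, hence $\Gamma\in\Inv(C)$. The $n$ tuples $\pi_i:=(e_1(i),\dots,e_m(i))$ for $i\in\{1,\dots,n\}$ lie in $\Gamma$, since each is the image of the $i$-th projection, which belongs to $C$. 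As $f$ preserves $\Gamma$, applying $f$ coordinatewise to $\pi_1,\dots,\pi_n$ yields $(f(e_1),\dots,f(e_m))\in\Gamma$, so this tuple equals $(g(e_1),\dots,g(e_m))$ for some $g\in C$. Since $e_1,\dots,e_m$ exhaust $A^n$, the operations $f$ and $g$ agree everywhere, whence $f=g\in C$. The essential point is that finiteness makes $\Gamma$ a genuine relation of finite arity on which a single preservation test pins down $f$.

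For (4), I would argue dually, and this is the step I expect to be the main obstacle. Let $s\in\Inv(\Pol(R))$ be $m$-ary with elements (columns) $t_1,\dots,t_p$, and let $a_1,\dots,a_m\in A^p$ be the rows of the corresponding matrix, so $a_i(l)=t_l(i)$. Since $R$ is a relational clone it contains the full relation $A^p$ and is closed under intersection (both are primitive positive definitions), so the smallest relation $\bar s\in R$ with $s\subseteq\bar s$ exists, and one always has $s\subseteq\bar s$. Writing $g(u_1,\dots,u_k)$ for the coordinatewise application of a $k$-ary $g$, set
\[
\Gamma:=\{g(t_1,\dots,t_p)\mid g\in\Pol_p(R)\}\subseteq A^m.
\]
Taking $g$ to be projections shows $s\subseteq\Gamma$, while the hypothesis $s\in\Inv(\Pol(R))$ forces $\Gamma\subseteq s$, so in fact $\Gamma=s$. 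Hence it suffices to prove $\bar s\subseteq\Gamma$, that is, that every member of $\bar s$ equals $(g(a_1),\dots,g(a_m))$ for some $g\in\Pol_p(R)$, where $g(a_i)$ is the value of $g$ at the row $a_i\in A^p$ (this tuple is exactly $g(t_1,\dots,t_p)$). This is the crux: it requires exhibiting, for each such member, an actual polymorphism of $R$, equivalently realizing $\Gamma$ as a primitive positive definition over $R$. I would obtain this by a finiteness-driven construction parallel to (3): build a canonical relation of $R$ of arity $|A|^p$ generated by the coordinate tuples of $A^p$, and read off from membership in it the required polymorphism. Once $\bar s\subseteq\Gamma=s\subseteq\bar s$ is established, $s=\bar s\in R$.

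Finally, I would stress that finiteness of $A$ is indispensable in both reverse inclusions, since the arities $|A|^n$ and $|A|^p$ of the canonical relations must be finite for the single-test arguments to succeed, and that items (3) and (4) together say precisely that $\Pol$ and $\Inv$ restrict to mutually inverse bijections between clones and relational clones on $A$.
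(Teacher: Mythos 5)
The paper itself gives no proof of this theorem: it is quoted from the literature (Bodnar\v{c}uk--Kalu\v{z}nin--Kotov--Romov, Geiger), so your attempt has to stand on its own rather than be compared with an in-paper argument. Items (1) and (2) are indeed routine verifications, and your proof of (3) is complete and correct: it is the standard argument, with $\Gamma$ (the closure of the projection tuples under $C$, i.e.\ the graph of the $n$-generated free algebra) invariant under $C$ precisely because $C$ is closed under composition, and a single preservation test forcing $f$ to agree with some $g\in C$ on all of $A^n$.

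Item (4), however, contains a genuine gap, exactly at the point you yourself flag as the crux. You correctly reduce the problem to producing, for each element of $\bar s$, a polymorphism $g\in\Pol_p(R)$ realizing it as $(g(a_1),\dots,g(a_m))$, and you correctly name the canonical object: the relation $\Gamma_N\in R$ of arity $N=|A|^p$, namely the smallest member of $R$ containing the $p$ generic rows $r_l=(b_1(l),\dots,b_N(l))$, where $A^p=\{b_1,\dots,b_N\}$. But the phrase ``read off from membership in it the required polymorphism'' is the entire difficulty, and it is \emph{not} parallel to (3): in (3) the invariance of $\Gamma$ was automatic from clone composition, whereas here one must prove that for every $u\in\Gamma_N$ the map $g\colon b_j\mapsto u_j$ preserves every $\rho\in R$. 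This needs its own argument: given a $k$-ary $\rho\in R$ and $v_1,\dots,v_p\in\rho$, choose $h\colon\{1,\dots,k\}\ra\{1,\dots,N\}$ with $b_{h(i)}=(v_1(i),\dots,v_p(i))$; the preimage relation $\rho'':=\{u\in A^N\mid (u_{h(1)},\dots,u_{h(k)})\in\rho\}$ is primitive positive definable from $\rho$, hence lies in $R$, and it contains $r_1,\dots,r_p$; minimality of $\Gamma_N$ then gives $\Gamma_N\subset\rho''$, which says exactly that $g$ maps $(v_1,\dots,v_p)$ into $\rho$. With that established, $s$ is the projection of $\Gamma_N$ onto the coordinates indexed by its rows $a_1,\dots,a_m$ (another primitive positive definition), so $s\in R$ directly, and $\bar s$ is not even needed. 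This minimality-plus-preimage argument is the real content of Geiger's half of the theorem; without it, your treatment of (4) is a plan rather than a proof.
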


\begin{df}
A \emph{non-indexed algebra} (or simply an \emph{algebra}) is a pair of $(A,C)$ of a set $A$ and a clone on $A$.
$A$ and $C$ are called the underlying set of the algebra and the set of term operations of the algebra respectively.
Usually, an algebra is denoted by its underlying set $A$ and the clone of term operations of it is denoted by $\Clo(A)$.
We also use the notation $\Clo_n(A):=\Clo(A)\cap {\cal O}_A^{(n)}$.
\end{df}

\begin{df}
The algebra of the form $(A,{\cal O}_A)$, where $A$ is a finite set, is said to be \emph{primal}.
\end{df}
\begin{rem}
If $A$ is a finite set, the relational clone corresponding to ${\cal O}_A$ is described as
\[
\Inv_n({\cal O}_A)=\{\Delta_{{\cal E}}^A\mid {\cal E}\text{ is an equivalence relation on }\{1,\dots,n\}\}.
\]
\end{rem}
%%%%%%%%%%%%%%%%%%%%%%%%%%%%%%%%%%%%

\begin{df}
Let $A$ and $B$ be algebras. An \emph{isomorphism}
between $A$ and $B$ as non-indexed algebras is 
a bijection 
$\ph\colon A\ra B$
that satisfies the following conditions:
\begin{itemize}
\item
For any $n\in \N$ and $f\in \Clo_n(A)$, 
\[
\ph\circ f\circ \overline{\ph^{-1}}:=[(b_1,\ldots,b_n)\mapsto \ph(f(\ph^{-1}(b_1),\ldots,\ph^{-1}(b_n)))]\in \Clo_n(B).
\]
\item
For any $n\in \N$ and $g\in \Clo_n(B)$, 
\[
\ph^{-1}\circ g\circ \overline{\ph}:=[(a_1,\ldots,a_n)\mapsto \ph^{-1}(f(\ph(a_1),\ldots,\ph(a_n)))]\in \Clo_n(A).
\]
\end{itemize}
%In this context, we call $\ph\circ f\circ \overline{\ph^{-1}}$ the \emph{conjugation} of $f$ by $\ph$. 
The algebra $A$ is said to be \emph{isomorphic}
to $B$ as non-indexed algebras if there
exists an isomorphism between $A$ and $B$.
\end{df}

%%%%%%%%%%%%%%%%%%%%%%%%%%
\subsection{Preliminaries from relational structure theory}
%%%%%%%%%%%%%%%%%%%%%%%%%%

In this subsection, we describe basic definitions and facts of relational structure theory.

\begin{df}
[{\cite[Definition 2.4, 3.1]{Kea}}]
Let $A$ be an algebra.
\begin{enumerate}
\item
The set of \emph{idempotent term operations} of $A$ is written by ${\bf E}(A)$,
i.e., ${\bf E}(A):=\{e\in\Clo_1(A)\mid A\models e^2(x)=e(x)\}$.
\item
$U\subset A$ is said to be a \emph{neighbourhood} of $A$ if there is $e\in {\bf E}(A)$ such that $e(A)=U$. 
The set of all neighbourhoods of $A$ is denoted by $\mathcal{N}(A)$.
A neighbourhood $U$ of $A$ is said to be proper if $U\neq A$.
\item
Let $U\in {\cal N}(A)$.
We denote by $A|_U$ the algebra whose underlying set is $U$ and 
whose set of term operations is given by the following clone:
\[
\Clo_n(A|_U):=\{t\rest_{U^n}\mid t\in \Clo_n(A) \text{ and }t(U^n)\subset U\}. %add ":" for definition.
\]
\item
A set $\mathcal{U}$ of neighbourhoods of $A$ is said to \emph{cover} a neighbourhood $V$ of $A$ (or $\mathcal{U}$ is a cover of $V$)
if the following condition holds:
\[
\forall r,s\in \Inv(A)\;\ (\forall U\in \mathcal{U}\ ;\ r\rest_U=s\rest_U) \Rightarrow r\rest_V=s\rest_V.
\]
Here, $r\rest_U$ denotes $r\cap U^m$ where $m$ is the arity of $r$.
\end{enumerate}
\end{df}

\begin{rem}
Let $A$ be an algebra and $U\subset A$. If $U$ is the both a subalgebra of $A$ and a neighbourhood of $A$,
then $A|_U$ is isomorphic to the subalgebra $U$ of $A$ as non-indexed algebras.
In this sense, it is not confusable to denote $A|_U$ by $U$.
\end{rem}

\begin{prop}
[cf.\ {\cite[Theorem 3.3]{Kea}}, {\cite[Theorem 3.4.6]{Beh}}]\label{covercharacterization}
Let $A$ be a finite algebra and $e_1,\ldots, e_n,e\in {\bf E}(A)$.
Then the following conditions are equivalent.
\begin{enumerate}
\item
$\{e_1(A),\ldots,e_n(A)\}$ is a cover of $e(A)$.
\item
There exist a non-negative integer $m\in \N$, term operations \mbox{$\lambda \in\Clo_m(A)$}, 
\mbox{$f_1,\ldots,f_m\in \Clo_1(A)$} and $i_1,\ldots,i_m\in \{1,\ldots,n\}$ that satisfy
\[
A\models \lambda(e_{i_1}f_{1}(x),\ldots,e_{i_m}f_{m}(x))=e(x).
\]
\end{enumerate}
\end{prop}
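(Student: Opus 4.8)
The plan is to prove the two implications separately: $(2)\Rightarrow(1)$ is a direct verification, whereas $(1)\Rightarrow(2)$ is the substantive direction and will rest on choosing two suitable invariant relations living on the power $A^A$. For $(2)\Rightarrow(1)$, assume the identity $A\models \lambda(e_{i_1}f_1(x),\ldots,e_{i_m}f_m(x))=e(x)$, set $U_j:=e_j(A)$ and $V:=e(A)$, and take $r,s\in\Inv(A)$ of equal arity $p$ that agree on every $U_j$. For a tuple $\bar a=(a_1,\ldots,a_p)\in r\rest_V$ one has $e(a_l)=a_l$ by idempotency. For each $t$, applying the unary term operation $e_{i_t}f_t$ coordinatewise and using invariance of $r$ produces a tuple $\bar c_t\in r$ all of whose entries lie in $U_{i_t}$, so $\bar c_t\in r\rest_{U_{i_t}}=s\rest_{U_{i_t}}\subset s$; applying $\lambda$ to $\bar c_1,\ldots,\bar c_m$ and invoking invariance of $s$ returns the tuple $(\lambda(e_{i_1}f_1(a_l),\ldots,e_{i_m}f_m(a_l)))_l=(e(a_l))_l=\bar a$, whence $\bar a\in s\rest_V$. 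This gives $r\rest_V\subset s\rest_V$, and by the symmetry of the hypothesis the reverse inclusion, so $r\rest_V=s\rest_V$.

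For $(1)\Rightarrow(2)$ I would work in $A^A$, indexing its coordinates by the elements of $A$; since $A$ is finite this is a finitary power, so its subalgebras are exactly the invariant relations of arity $|A|$. Let $R:=\Clo_1(A)\subset A^A$ be the subalgebra identified with the unary term operations via $h\mapsto(h(a))_{a\in A}$, and let $\Theta$ be the subalgebra of $A^A$ generated by $\bigcup_j (R\cap U_j^A)$. Both are invariant relations, and under the identification $R\cap U_j^A$ is precisely the set of unary term operations with range contained in $U_j$. Since the generators of $\Theta$ lie in the subalgebra $R$, we have $\Theta\subset R$, and hence $R\rest_{U_j}=\Theta\rest_{U_j}$ for every $j$, as both sides equal $R\cap U_j^A$.

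Now I would invoke the cover hypothesis $(1)$ with $r:=R$ and $s:=\Theta$: agreement on all the $U_j$ forces $R\rest_V=\Theta\rest_V$. Because $e$ has range exactly $V$, we get $e\in R\rest_V=\Theta\rest_V\subset\Theta$. Finally I would unpack membership in $\Theta$: every element of $\Theta$ has the form $\lambda(g_1,\ldots,g_m)$ for some $\lambda\in\Clo_m(A)$ and generators $g_t$, each with range inside some $U_{i_t}$, so writing $g_t=e_{i_t}g_t=:e_{i_t}f_t$ turns the equation $e=\lambda(g_1,\ldots,g_m)$ in $A^A$ into exactly the identity required by $(2)$.

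The main obstacle — and really the only idea needed — is the choice of the pair $(R,\Theta)$ in $(1)\Rightarrow(2)$: once one recognizes that ``expressibility of $e$ from the cover'' is literally the statement $e\in\Theta$, where $\Theta$ is the subalgebra of $A^A$ generated by the $U_j$-restrictions of $\Clo_1(A)$, the cover condition applied to $(R,\Theta)$ does all the remaining work. I expect no appeal to the full $\Pol$--$\Inv$ duality of Theorem~\ref{pol-inv-finite} to be required beyond the elementary identification of subalgebras of the finitary power $A^A$ with invariant relations of arity $|A|$.
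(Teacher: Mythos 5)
Your proof is correct in both directions: the verification of $(2)\Rightarrow(1)$ is sound, and for $(1)\Rightarrow(2)$ the pair $R=\Clo_1(A)\subset A^A$ and $\Theta=$ the subalgebra of $A^A$ generated by $\bigcup_j(R\cap U_j^A)$ does exactly what you claim, since $e\in R\rest_{e(A)}=\Theta\rest_{e(A)}$ and every element of $\Theta$ has the form $\lambda(g_1,\ldots,g_m)$ with each $g_t\in\Clo_1(A)$ satisfying $g_t=e_{i_t}g_t$. The paper itself states this proposition without proof (citing Kearnes and Behrisch), and your argument is essentially the standard one given in those references, so there is nothing further to compare.
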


\begin{df}
[{\cite[Definition 5.2, 5.4]{Kea}}, {\cite[Definition 3.6.1]{Beh}}]
Let $A$ be a finite algebra and $\mathcal{U}$ be a cover of the neighbourhood $A=\id_A(A)$ of $A$.
\begin{enumerate}
\item
The algebra $A$ is said to be \emph{irreducible} if every cover of 
the algebra $A$ contains the neighbourhood $A$.
\item
A neighbourhood $U$ of $A$ is said to be \emph{irreducible} if $A|_U$ is an irreducible algebra.
\item
The cover $\mathcal{U}$ is said to be \emph{irredundant} if any $\mathcal{U}'\subsetneq \mathcal{U}$
does not cover $A$.
\item
Let $\mathcal{U}'$ be a cover of $A$. $\mathcal{U}'$ is said to be a \emph{refinement} of $\mathcal{U}$
if for any $U'\in \mathcal{U}'$ there is $U\in \mathcal{U}$ such that $U'\subset U$.
The refinement $\mathcal{U}'$ of $\mathcal{U}$ is said to be proper if 
$\mathcal{U}$ is not a refinement of $\mathcal{U}'$.
\item
The cover $\mathcal{U}$ is said to be \emph{non-refinable} if $\mathcal{U}$ has no
proper refinement. 
\end{enumerate}
\end{df}

%\begin{prop}\label{irreducibility}
%Let $A$ be a finite algebra and $\mathcal{U}$ be an irredundant non-refinable cover of $A$. Then any $U\in \mathcal{U}$ is irreducible.
%\end{prop}

\begin{df}[{\cite[Lemma 3.5]{Kea}}]
Let $A$ be an algebra and $U_1,\ldots, U_n\in \mathcal{N}(A)$.
An algebra $U_1\boxtimes\cdots\boxtimes U_n$ called the \emph{matrix product} of $(U_1,\ldots,U_n)$
is defined as follows:
\begin{itemize}
\item
The underlying set is the product set $\prod_{i=1}^n U_i$.
\item
The set of term operations $\Clo_m(U_1\boxtimes\cdots\boxtimes U_n)$ is the set of all operations described by
\[%bd again array ok
\begin{pmatrix}
\begin{pmatrix}
a_{11}\\
\vdots\\
a_{n1}
\end{pmatrix}
,\ldots,
\begin{pmatrix}
a_{1m}\\
\vdots\\
a_{nm}
\end{pmatrix}
\end{pmatrix}
\longmapsto 
\begin{pmatrix}
t_1(\mathbf{a})\\
\vdots\\
t_n(\mathbf{a})
\end{pmatrix},
\]
where $t_i$ are $nm$-ary term operations of $A$ such that $t_i(A^{nm})\subset U_i$ and 
$\mathbf{a}=(a_{ij})_{\substack{1\leq i\leq n\\ 1\leq j\leq m}}$.
\end{itemize}
The case $U_1,\dots,U_n=U$, $U_1\boxtimes \dots \boxtimes U_n$ is denoted by
$U^{[n]}$.
\end{df}

\begin{thm}
[{\cite[Theorem 5.3]{Kea}}, {\cite[Theorem 3.8.1]{Beh}}, {\cite[Theorem 4.2]{Iza}}]\label{coveruniqueness}
Let $A$ be a finite algebra.
\begin{enumerate}
\item
If $\{U_1,\ldots, U_n\}$ and $\{V_1,\ldots, V_m\}$ are irredundant non-refinable cover of $A$, then $n=m$ and there exists a permutation $\sigma$
on $\{1,\dots,n\}$ that satisfies $U_i$ is isomorphic to $V_{\sigma(i)}$.
Moreover, an isomorphism $\ph_i:U_i\ra V_{\sigma(i)}$ can be chosen from
a restriction of a unary term operation of $A$.
\item
$\{U_1,\ldots, U_n\}$ and $\{V_1,\ldots, V_n\}$ be
irredundant non-refinable covers of $A$.
Then $U_1\boxtimes\cdots\boxtimes U_n$ and $V_1\boxtimes\cdots\boxtimes V_n$ are isomorphic as non-indexed algebras.
\end{enumerate}
\end{thm}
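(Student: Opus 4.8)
The plan is to reduce every statement about covers to the term-operation identity supplied by Proposition \ref{covercharacterization}, and then to run a matching argument between the two covers. First I would record two elementary consequences of the definition of a cover. (a) Covering the top neighbourhood is inherited downward: if $\mathcal U$ covers $A=\id_A(A)$, then $r\rest_U=s\rest_U$ for all $U\in\mathcal U$ forces $r=s$, so $\mathcal U$ covers every neighbourhood of $A$. (b) Covering is transitive: if $\mathcal U$ covers a neighbourhood $W$ and each $U\in\mathcal U$ is in turn covered by a family $\mathcal W_U$, then $\bigcup_{U}\mathcal W_U$ covers $W$; via Proposition \ref{covercharacterization} this is proved by substituting the witnessing identities $\lambda_U(\ldots)=e_U(x)$ into the witnessing identity for $\mathcal U$, which again yields an identity of the required shape because $\Clo(A)$ is closed under composition.

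With these in hand I would attack part (1). Write $U_i=d_i(A)$ and $V_j=e_j(A)$ with $d_i,e_j\in{\bf E}(A)$. The first claim is that each member of an irredundant non-refinable cover is irreducible: if some $A|_{U_i}$ had a cover avoiding $U_i$, that cover consists of proper sub-neighbourhoods of $U_i$, and replacing $U_i$ by it (keeping the others) produces, via transitivity, a proper refinement of $\mathcal U$, contradicting non-refinability. The core step is a localization: since $\mathcal V$ covers $A$, it covers $U_i$ by (a); I would push this cover inside $U_i$ by composing with the retraction $d_i$ and replacing each unary term $d_i e_j(\,\cdot\,)\rest_{U_i}\colon U_i\ra U_i$ by a suitable idempotent power (which exists because $A$ is finite), obtaining neighbourhoods $W_{ij}\subseteq U_i$ of $A|_{U_i}$ that cover $U_i$. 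By irreducibility of $U_i$, some $W_{i,\sigma(i)}$ must equal $U_i$; unwinding the idempotent power then produces a unary term $f_i$ of $A$ with $f_i(U_i)\subseteq V_{\sigma(i)}$ and a unary term $g_i$ with $g_i(V_{\sigma(i)})\subseteq U_i$ satisfying $g_if_i\rest_{U_i}=\id_{U_i}$.

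Running the symmetric localization of $\mathcal U$ inside $V_{\sigma(i)}$ gives the reverse relation $f_ig_i\rest_{V_{\sigma(i)}}=\id_{V_{\sigma(i)}}$, so $\ph_i:=f_i\rest_{U_i}$ is a bijection onto $V_{\sigma(i)}$ whose inverse $g_i\rest_{V_{\sigma(i)}}$ is again a restriction of a unary term; since conjugation by a pair of mutually inverse unary-term restrictions carries term operations to term operations, $\ph_i$ is an isomorphism of non-indexed algebras. Finally I would check that $\sigma$ is a bijection, which also yields $n=m$: if $\sigma(i)=\sigma(i')$ for $i\neq i'$, then $U_i$ and $U_{i'}$ are both isomorphic to the common $V_{\sigma(i)}$ through restrictions of unary terms, and such a pair is redundant in a cover, so one of them could be deleted, contradicting irredundancy; the reverse inequality follows by symmetry.

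For part (2), I would assemble the pieces produced in part (1). Let $p_i,q_i$ be unary terms of $A$ restricting to $\ph_i$ and $\ph_i^{-1}$. Define $\Phi\colon \prod_{i=1}^n U_i\ra\prod_{i=1}^n V_{\sigma(i)}$ coordinatewise by the $\ph_i$, and compose with the coordinate permutation induced by $\sigma$ to land in $V_1\boxtimes\cdots\boxtimes V_n$; this permutation is a non-indexed isomorphism because the matrix product is symmetric in its factors. For $\Phi$ itself, a term operation of $U_1\boxtimes\cdots\boxtimes U_n$ is given componentwise by $nm$-ary terms $t_i$ of $A$ with $t_i(A^{nm})\subseteq U_i$; conjugating by the $\ph_i$ replaces $t_i$ by $p_i\circ t_i$ with inputs precomposed by the $q_i$, which is again an $nm$-ary term of $A$ with image in $V_{\sigma(i)}$, hence a term operation of the target, and the converse direction is identical with the roles of $p_i,q_i$ swapped. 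Thus $\Phi$ is an isomorphism of non-indexed algebras. I expect the localization in part (1) to be the main obstacle: one must produce genuine neighbourhoods of $A|_{U_i}$ sitting inside $U_i$ (the idempotent-power trick together with the comparison between $\Inv(A|_{U_i})$ and the restrictions of $\Inv(A)$), and then argue carefully that the witnessing unary terms are truly mutually inverse and that $\sigma$ is a bijection rather than merely a well-defined map.
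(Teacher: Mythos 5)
You should first be aware that the paper itself contains no proof of Theorem \ref{coveruniqueness}: it is imported from the cited sources (Kearnes, Behrisch, and Izawa's earlier paper), so your proposal can only be compared with the standard proof found there. Your architecture does follow that standard route: reduce covering to the identity of Proposition \ref{covercharacterization}, show that members of an irredundant non-refinable cover are irreducible, localize the cover $\{V_j\}$ inside each $U_i$ by composing with the retraction $d_i$ and passing to idempotent powers (using finiteness), and then transport term operations coordinatewise for part (2). Your preliminary facts (a) and (b), the irreducibility claim, the construction of unary terms $f_i,g_i$ with $g_if_i\rest_{U_i}=\id_{U_i}$, the redundancy argument for injectivity of $\sigma$, and the whole of part (2) are sound, modulo two details you partly flag yourself: the comparison between $\Inv(A|_{U_i})$ and $\Inv(A)\rest_{U_i}$, and the fact that the maps whose idempotent powers you take should be $(d_ie_jf_j)\rest_{U_i}$, with the unary terms $f_j$ supplied by Proposition \ref{covercharacterization}, not $(d_ie_j)\rest_{U_i}$.

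The genuine gap is the sentence claiming that ``running the symmetric localization of ${\cal U}$ inside $V_{\sigma(i)}$ gives the reverse relation $f_ig_i\rest_{V_{\sigma(i)}}=\id_{V_{\sigma(i)}}$.'' It does not. The symmetric localization produces, for $V_{\sigma(i)}$, some index $j$ and \emph{different} unary terms $f',g'$ with $f'(V_{\sigma(i)})\subset U_{j}$ and $g'f'\rest_{V_{\sigma(i)}}=\id_{V_{\sigma(i)}}$; nothing so far forces $j=i$, let alone that your particular $f_i$ is surjective onto $V_{\sigma(i)}$. A priori $f_i(U_i)$ could be a proper sub-neighbourhood: indeed $f_ig_i\rest_{V_{\sigma(i)}}$ is an idempotent with image $f_i(U_i)$, and it equals the identity exactly when $f_i$ is onto, which is what has to be proved. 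Closing the loop needs a replacement argument, and this is where non-refinability --- which in your draft is used only to get irreducibility --- does its real work: the composite $f'f_i$ maps $U_i$ onto a neighbourhood $f'f_i(U_i)\subset U_j$ equivalent to $U_i$ (witnessed by the idempotent $f'f_ig_ig'$ and the mutually inverse restrictions $f'f_i$, $g_ig'$), so $({\cal U}\sm\{U_i\})\cup\{f'f_i(U_i)\}$ is again a cover of $A$ and a refinement of ${\cal U}$; if $j\neq i$ it is a \emph{proper} refinement (properness uses irredundancy: $U_i\not\subset U_k$ for $k\neq i$, and $U_i\not\subset f'f_i(U_i)$ by cardinality), contradicting non-refinability. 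Hence $j=i$, and then $|U_i|\leq|V_{\sigma(i)}|\leq|U_i|$ forces $f_i$ to be a bijection onto $V_{\sigma(i)}$, so that $g_i\rest_{V_{\sigma(i)}}=\ph_i^{-1}$. Without this step your $\ph_i$ is only an embedding with a one-sided inverse, and part (2), which conjugates by the $\ph_i$ and their inverses, does not get off the ground.
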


\begin{df}
Let $A$ be a finite algebra. 
\begin{enumerate}
\item
We say \emph{essential part} of $A$
the matrix product $U_1\boxtimes\cdots\boxtimes U_n$ of 
an irredundant non-refinable cover $\{U_1,\ldots,U_n\}$ of $A$.
We denote by $\Ess(A)$ the (structure of an) essential part of $A$.
\item
The algebra $A$ is said to be \emph{essential} if $\Ess(A)$ is isomorphic to $A$ as a non-indexed algebra.
\end{enumerate}
\end{df}

\begin{prop}
[{\cite[Theorem 3.3]{Kea}}]\label{}
Let $A$ be a finite algebra.
Then there exist positive integer $m$ and $A'\in {\cal N}(\Ess(A)^{[m]})$ such that $A'$ is isomorphic to $A$ as a non-indexed algebra.
\end{prop}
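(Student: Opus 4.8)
The plan is to fix an irredundant non-refinable cover $\{U_1,\ldots,U_n\}$ of $A$, write $U_i=e_i(A)$ with $e_i\in{\bf E}(A)$, and realize $A$ as the image of an idempotent term operation of a power of $E:=\Ess(A)=U_1\boxtimes\cdots\boxtimes U_n$. The engine of the whole argument is Proposition \ref{covercharacterization} applied to the cover of the neighbourhood $A=\id_A(A)$ (so $e=\id_A$): it produces $m\in\N$, a term operation $\lambda\in\Clo_m(A)$, unary $f_1,\ldots,f_m\in\Clo_1(A)$ and indices $i_1,\ldots,i_m\in\{1,\ldots,n\}$ with
\[
A\models \lambda(e_{i_1}f_1(x),\ldots,e_{i_m}f_m(x))=x.
\]
I read this as saying that every $x\in A$ is reconstructible from the $m$ values $e_{i_k}f_k(x)\in U_{i_k}$, and it is natural to take the exponent of the power to be exactly this $m$, i.e.\ to work inside $E^{[m]}=E\boxtimes\cdots\boxtimes E$ (which is legitimate since $E\in{\cal N}(E)$).

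Writing the underlying set of $E$ as $\prod_{i=1}^n U_i$ and that of $E^{[m]}$ as $\left(\prod_{i=1}^n U_i\right)^m$, I would define $\Psi\colon A\ra E^{[m]}$ so that the $k$-th block of $\Psi(x)$ is the column $(e_1f_k(x),\ldots,e_nf_k(x))$. Injectivity of $\Psi$ is immediate: if $\Psi(x)$ and $\Psi(y)$ agree blockwise then in particular $e_{i_k}f_k(x)=e_{i_k}f_k(y)$ for each $k$, whence $x=\lambda(\ldots)=\lambda(\ldots)=y$ by the reconstruction identity. Next I would define $\varepsilon\colon E^{[m]}\ra E^{[m]}$ by first reconstructing $w=\lambda(z^{(1)}_{i_1},\ldots,z^{(m)}_{i_m})\in A$ from the selected coordinates of the input $z$, and then returning $\Psi(w)$. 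The reconstruction identity gives $\varepsilon\circ\varepsilon=\varepsilon$ and $\varepsilon(E^{[m]})=\Psi(A)$, so once $\varepsilon$ is known to be a term operation of $E^{[m]}$ we obtain $\Psi(A)\in{\cal N}(E^{[m]})$, and $A'=\Psi(A)$ will be the desired neighbourhood.

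To identify $\varepsilon$ and the relevant conjugated operations as term operations of $E^{[m]}$, I would unwind the definition of the matrix product twice: a unary term operation of $E^{[m]}$ is encoded by a family of term operations $s_{ji}\in\Clo(A)$ (for $1\le j\le m$, $1\le i\le n$), each mapping into $U_i$ and acting on all entries of the input, with the $(j,i)$-coordinate of the output computed by $s_{ji}$. Now the $(j,i)$-coordinate of $\varepsilon(z)$ equals $e_if_j\!\left(\lambda(z^{(1)}_{i_1},\ldots,z^{(m)}_{i_m})\right)$, an $A$-term in the entries of $z$ with image inside $e_i(A)=U_i$; hence $\varepsilon\in\Clo_1(E^{[m]})$. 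The same parametrisation shows that for each $t\in\Clo(A)$ the operation ``reconstruct each argument, apply $t$, reapply $\Psi$'' lies in $\Clo(E^{[m]})$, preserves $\Psi(A)$, and satisfies $\Psi\circ t=\hat t\circ\Psi$; and conversely, for any $S\in\Clo(E^{[m]})$ preserving $\Psi(A)$, the conjugate $\Psi^{-1}\circ S\circ\Psi$ equals $\lambda(g_{1\,i_1},\ldots,g_{m\,i_m})$ for suitable $A$-terms $g_{ji}$ built from $S$'s parametrising family and the entries of $\Psi$, hence lies in $\Clo(A)$. These two directions are exactly the conditions for $\Psi$ to be an isomorphism of non-indexed algebras onto $E^{[m]}|_{\Psi(A)}$, completing the proof.

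The conceptual steps are short, and I expect the main obstacle to be technical rather than deep: it is the arity bookkeeping in the two-fold unwinding of the matrix-product clone, where one must check that each output coordinate of $\varepsilon$ and of every conjugated operation is genuinely exhibited as an $A$-term of the correct arity whose image lands in the prescribed neighbourhood $U_i$. Once the term operations of $E^{[m]}$ are correctly parametrised by families of such $A$-terms, each required operation is visibly of that form, and the verifications that $\varepsilon$ is idempotent and that $\Psi$ respects term operations in both directions all reduce to a single application of the reconstruction identity.
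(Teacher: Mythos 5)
Your proof is correct. Note that the paper itself gives no proof of this proposition: it is quoted from \cite[Theorem 3.3]{Kea}, the same source cited for Proposition~\ref{covercharacterization}, and your argument --- applying Proposition~\ref{covercharacterization} with $e=\id_A$ to obtain the reconstruction identity, then building the embedding $\Psi$, the idempotent $\varepsilon$, and unwinding the matrix-product clone twice to verify both directions of the non-indexed isomorphism --- is essentially the standard proof underlying that citation. One cosmetic point: Proposition~\ref{covercharacterization} only yields $m\in\N$ while the statement asks for a positive integer; if $m=0$ occurs then $\lambda$ is a constant with $A\models\lambda=x$, so $|A|=1$ and the claim is trivial (or one pads $\lambda$ with a fictitious variable to force $m=1$).
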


\begin{lem}\label{two-elements-cover}
Let $A$ be a finite algebra,
${\cal U}$ be a irredundant cover of $A$ that satisfies
$\forall U\in{\cal U}; |U|=2$. Then ${\cal U}$ is non-refinable.
\end{lem}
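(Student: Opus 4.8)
The plan is to unwind the definition of non-refinability: it suffices to take an arbitrary refinement $\mathcal{U}'$ of $\mathcal{U}$ and show that $\mathcal{U}$ is in turn a refinement of $\mathcal{U}'$, i.e.\ that $\mathcal{U}'$ is not a \emph{proper} refinement. First I would record the size constraint forced by refinement: each $U'\in\mathcal{U}'$ is contained in some $U\in\mathcal{U}$ with $|U|=2$, and since a neighbourhood is the image of an idempotent it is nonempty, so $1\le |U'|\le 2$; moreover if $|U'|=2$ then $U'=U\in\mathcal{U}$. Writing $\mathcal{U}'_2$ and $\mathcal{U}'_1$ for the $2$-element and the singleton members of $\mathcal{U}'$, we thus have $\mathcal{U}'_2\subseteq\mathcal{U}$, and the whole statement reduces to proving $\mathcal{U}'_2=\mathcal{U}$: for then every $U\in\mathcal{U}$ lies in $\mathcal{U}'$, whence $U\subseteq U\in\mathcal{U}'$ and $\mathcal{U}$ refines $\mathcal{U}'$.

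The engine of the proof is that singleton neighbourhoods are redundant in any cover. If $\{c\}\in\mathcal{N}(A)$ then the witnessing idempotent $e\in{\bf E}(A)$ has image $\{c\}$, hence is the constant unary term operation $x\mapsto c$; since every $r\in\Inv(A)$ is invariant under $e\in\Clo_1(A)$, applying $e$ coordinatewise to any tuple of $r$ shows $(c,\dots,c)\in r$ whenever $r\neq\emptyset$. Consequently the restriction $r\rest_{\{c\}}$ detects nothing but the emptiness of $r$. Using this I would prove the key claim that $\mathcal{U}'_2$ already covers $A$: given $r,s\in\Inv(A)$ of the same arity that agree on every member of $\mathcal{U}'_2$, it is enough to check they also agree on every singleton of $\mathcal{U}'_1$ and then invoke that $\mathcal{U}'$ covers $A$. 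Agreement on a singleton $\{c\}$ is exactly the equivalence $r\neq\emptyset\Leftrightarrow s\neq\emptyset$, and this I would obtain by fixing one $U_0'\in\mathcal{U}'_2$ with idempotent $e_0$: applying $e_0$ coordinatewise sends a tuple of $r$ into $r\rest_{U_0'}$, so $r\neq\emptyset\Leftrightarrow r\rest_{U_0'}\neq\emptyset$, and likewise for $s$; since $r\rest_{U_0'}=s\rest_{U_0'}$ the equivalence follows.

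This step presupposes $\mathcal{U}'_2\neq\emptyset$, which I would justify first: were $\mathcal{U}'$ to consist only of singletons, its covering condition would collapse to ``$r\neq\emptyset\Leftrightarrow s\neq\emptyset$ implies $r=s$'', which fails for $r=\Delta^A$ and $s=A^2$ (both nonempty and distinct because $A$ carries a $2$-element neighbourhood, so $|A|\ge 2$). With $\mathcal{U}'_2$ established as a cover of $A$ contained in $\mathcal{U}$, the irredundancy of $\mathcal{U}$ immediately forces $\mathcal{U}'_2=\mathcal{U}$, finishing the proof.

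The routine parts are the size bookkeeping and the reduction through irredundancy; the real content is the redundancy-of-singletons claim. The main obstacle I anticipate is the careful handling of the empty relation and the convention in the cover definition that $r\rest_U=s\rest_U$ compares relations of equal arity --- one must make sure that a singleton neighbourhood genuinely contributes no information beyond emptiness, and that at least one $2$-element neighbourhood survives in $\mathcal{U}'$ to supply that single bit.
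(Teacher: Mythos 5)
Your proof is correct and follows essentially the same route as the paper's: discard the singleton members of a refinement $\mathcal{U}'$, observe that the remaining two-element members form a subset of $\mathcal{U}$ that still covers $A$, and invoke irredundancy of $\mathcal{U}$. The only differences are presentational (you argue directly that every refinement is non-proper rather than by contradiction), and you additionally supply the justification --- the constant-idempotent argument showing a singleton neighbourhood detects only emptiness, plus the existence of at least one surviving two-element member --- for the covering step that the paper merely asserts.
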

\begin{proof}
Assume ${\cal U}'$ is a proper refinement of ${\cal U}$.
Then ${\cal U}''=\{U\in {\cal U}'\mid |U|\neq 1\}$ is a proper subset of ${\cal U}$, since $\forall U\in{\cal U}; |U|=2$.
Because ${\cal U}'$ covers $A$, ${\cal U}''$ also covers $A$.
However, ${\cal U}''$ does not cover $A$ by irredundancy of ${\cal U}$.
It is contradiction.
\end{proof}

\begin{thm}[{\cite{Iza} Theorem 4.5}]\label{characterization-essential}
Let $A$ be a finite algebra. Then the following statements are pairwise equivalent.
\begin{enumerate}
\item
The algebra $A$ is essential.
\item
There exists a finite algebra $B$ such that $A$ is isomorphic to $\Ess(B)$.
\item
There exist $n\in\N,\lambda \in \Clo_n(A)$ and $e_1,\ldots,e_n\in {\bf E}(A)$ that satisfy
\begin{enumerate}
\item
$\{e_1(A),\dots,e_n(A)\}$ is an irredundant non-refinable cover of $A$,
\item
$A\models \lambda(e_1(x),\ldots,e_n(x))=x$,
\item
$A\models e_{i}\lambda(e_1(x_1),\ldots,e_n(x_n))=e_{i}(x_{i})$ for $i=1,\ldots,n$.
\end{enumerate}
\end{enumerate}
\end{thm}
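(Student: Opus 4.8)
The plan is to prove the cyclic chain $(1)\Rightarrow(2)\Rightarrow(3)\Rightarrow(1)$. The implication $(1)\Rightarrow(2)$ is immediate: if $A$ is essential then $A$ is isomorphic to $\Ess(A)$, so one takes $B=A$. The substance lies in $(3)\Rightarrow(1)$, which is a direct construction, and in $(2)\Rightarrow(3)$, where I expect the real difficulty. Throughout I use that being a cover, being irredundant, and being non-refinable are notions defined via $\Inv$ and neighbourhoods, hence invariant under isomorphism of non-indexed algebras; likewise the equations in $(3b)$ and $(3c)$ are preserved by such isomorphisms.

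For $(3)\Rightarrow(1)$ I would exhibit an explicit isomorphism onto the essential part. By $(3a)$ the cover $\{e_1(A),\dots,e_n(A)\}$ is irredundant and non-refinable, so by Theorem~\ref{coveruniqueness} we may take $\Ess(A)=e_1(A)\boxtimes\cdots\boxtimes e_n(A)$. Define $\Phi\colon A\ra \prod_{i=1}^n e_i(A)$ by $\Phi(x)=(e_1(x),\dots,e_n(x))$, with candidate inverse $(u_1,\dots,u_n)\mapsto \lambda(u_1,\dots,u_n)$. Injectivity together with $\lambda\circ\Phi=\id_A$ follows from $(3b)$, while surjectivity together with $\Phi\circ\lambda=\id$ follows from $(3c)$: given $u_i=e_i(a_i)$, the element $z=\lambda(u_1,\dots,u_n)$ satisfies $e_i(z)=e_i(a_i)=u_i$. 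It then remains to check that $\Phi$ transports term operations both ways. Recalling that every $m$-ary term operation of the matrix product has the form $\mathbf u\mapsto (s_i(\mathbf u))_{i=1}^n$ with $s_i\in\Clo_{nm}(A)$ and $s_i(A^{nm})\subset e_i(A)$, conjugating a term $t\in\Clo_m(A)$ by $\Phi$ yields the tuple whose $i$-th component is $e_i(t(\lambda(\mathbf u^{(1)}),\dots,\lambda(\mathbf u^{(m)})))$, a term of $A$ with image in $e_i(A)$, hence a matrix-product term; conversely, conjugating $(s_i)_i$ by $\Phi^{-1}=\lambda$ produces $\lambda(s_1,\dots,s_n)$, again a term of $A$. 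Thus $\Phi$ is an isomorphism of non-indexed algebras and $A\cong\Ess(A)$, i.e.\ $A$ is essential.

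For $(2)\Rightarrow(3)$, suppose $A$ is isomorphic to $M:=\Ess(B)=U_1\boxtimes\cdots\boxtimes U_n$ for an irredundant non-refinable cover $\{U_1,\dots,U_n\}$ of $B$, and fix $d_i\in{\bf E}(B)$ with $d_i(B)=U_i$. By the invariance noted above it suffices to verify $(3)$ for $M$ itself, and I would build the required data as matrix-product terms. Let $\hat e_i$ be the unary operation $\mathbf a=(a_1,\dots,a_n)\mapsto(d_1(a_i),\dots,d_n(a_i))$, whose $j$-th coordinate is the $B$-term $d_j\circ\pi_i$ with image $U_j$, and let $\hat\lambda$ be the $n$-ary operation sending $(\mathbf x_1,\dots,\mathbf x_n)$ to the diagonal $((\mathbf x_1)_1,\dots,(\mathbf x_n)_n)$, whose $i$-th coordinate is $d_i$ applied to the appropriate projection; both are legitimate terms of $M$. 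Using $d_i\rest_{U_i}=\id$, a short computation shows each $\hat e_i$ is idempotent, that $\hat\lambda(\hat e_1(\mathbf x),\dots,\hat e_n(\mathbf x))=\mathbf x$ (giving $(3b)$), and that $\hat e_i\hat\lambda(\hat e_1(\mathbf x_1),\dots,\hat e_n(\mathbf x_n))=\hat e_i(\mathbf x_i)$ (giving $(3c)$). Reading $(3b)$ through Proposition~\ref{covercharacterization}, with each $f_k=\id_M$ and $\lambda=\hat\lambda$, already shows that $\{\hat e_1(M),\dots,\hat e_n(M)\}$ covers $M$.

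The main obstacle is the remaining part of $(3a)$, namely that this cover is irredundant and non-refinable. The idea is to transfer these properties from $\{U_1,\dots,U_n\}$ on $B$ through coordinate projection. If a proper subcover, say the one omitting $\hat e_n(M)$, covered $M$, then Proposition~\ref{covercharacterization} would supply a term witnessing $\id_M$ using only $\hat e_1,\dots,\hat e_{n-1}$; projecting this witness to the $n$-th coordinate and reading off the induced $B$-terms (the $n$-th components of the $\hat e_i$ are $d_n\circ\pi_i$) would exhibit $\{U_1,\dots,U_{n-1}\}$ as a cover of $U_n$ in $B$, contradicting irredundancy of $\{U_i\}$. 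Non-refinability is obtained from the same correspondence: each $\hat e_i(M)$ is isomorphic to the irreducible neighbourhood $U_i$ via $u\mapsto(d_1(u),\dots,d_n(u))$, so no member admits a proper refinement, and an irredundant cover by irreducible neighbourhoods is non-refinable. Making the projection correspondence between covers of $M$ and covers of $B$ fully precise --- in effect the statement that forming the matrix product does not disturb the cover structure, so that $\Ess(M)\cong M$ by Theorem~\ref{coveruniqueness} --- is the technical heart of the argument and the step I expect to require the most care.
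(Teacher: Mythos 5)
This paper never proves Theorem \ref{characterization-essential}: it is quoted from \cite{Iza} (Theorem 4.5 there), so there is no in-paper proof to compare against, and I judge your argument on its own merits. Your architecture $(1)\Rightarrow(2)\Rightarrow(3)\Rightarrow(1)$ is sound. $(1)\Rightarrow(2)$ is trivial as you say, and your $(3)\Rightarrow(1)$ is correct and essentially complete: $\Phi=(e_1,\dots,e_n)$ with inverse $\lambda$ is a bijection by (3b)/(3c), and your conjugation computations verify both transport conditions in the definition of isomorphism of non-indexed algebras, so $A\iso e_1(A)\boxtimes\cdots\boxtimes e_n(A)=\Ess(A)$ by (3a). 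In $(2)\Rightarrow(3)$, your $\hat{e}_i$ and $\hat{\lambda}$ are legitimate terms of $M=U_1\boxtimes\cdots\boxtimes U_n$ and your verifications of (3b), (3c) and of the covering property are right. Your irredundancy transfer is correct in spirit but does not work as literally stated: projecting the witness $M\models\Lambda(\hat{e}_{i_1}f_1(x),\dots,\hat{e}_{i_m}f_m(x))=x$ to the $n$-th coordinate produces $\Lambda_n$ applied to the arguments $d_j((f_k(x))_{i_k})$ for \emph{all} $j\in\{1,\dots,n\}$, including $j=n$; an identity whose right-hand side contains $d_n$-headed terms is not of the shape Proposition \ref{covercharacterization} requires for the family $\{U_1,\dots,U_{n-1}\}$. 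The repair is small but must be made explicit: absorb the outer $d_j$'s into the term by setting $\lambda'(w_1,\dots,w_m):=\Lambda_n\bigl((d_j(w_k))_{j,k}\bigr)$, so that substituting $x=(d_1(b),\dots,d_n(b))$ gives $d_n(b)=\lambda'(d_{i_1}g_1(b),\dots,d_{i_m}g_m(b))$ with $g_k(b):=(f_k)_{i_k}(d_1(b),\dots,d_n(b))\in U_{i_k}$; this witnesses that $\{U_i\mid i\neq n\}$ covers $U_n$, and then transitivity of covering (immediate from the $\Inv$-definition) upgrades this to covering $B$, which is what irredundancy actually forbids --- irredundancy alone says nothing about covering $U_n$.

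The genuine gap is non-refinability of $\{\hat{e}_1(M),\dots,\hat{e}_n(M)\}$, precisely the step you yourself flag as the technical heart. Your argument rests on two facts: that each $U_i$ is irreducible, and that an irredundant cover consisting of irreducible neighbourhoods is non-refinable. Neither is among your hypotheses (you are given only that $\{U_1,\dots,U_n\}$ is an irredundant non-refinable cover of $B$), neither appears anywhere in this paper, and neither is proved in your proposal; both are substantive theorems of relational structure theory (extractable from \cite{Kea} or \cite{Beh}, but not in a line or two). Note that the only device this paper offers for establishing non-refinability is Lemma \ref{two-elements-cover}, which requires every member of the cover to have two elements and is therefore useless here, where the $U_i$ are arbitrary neighbourhoods of an arbitrary finite algebra $B$. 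To close the argument you must either prove those two structural facts, or give a direct transfer argument: show that a proper refinement of $\{\hat{e}_i(M)\}$ induces a proper refinement of $\{U_1,\dots,U_n\}$, using the isomorphisms $M|_{\hat{e}_i(M)}\iso B|_{U_i}$ (which you assert but should verify) together with the correspondence ${\cal N}(M|_{\hat{e}_i(M)})=\{V\in{\cal N}(M)\mid V\subset\hat{e}_i(M)\}$. As written, half of (3a) is asserted rather than proved, so the implication $(2)\Rightarrow(3)$ is incomplete.
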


\begin{thm}[{\cite[Theorem 4.4]{Iza}}]
\label{characterization}
Two finite algebras $A$ and $B$ are categorically equivalent if and only if 
their essential parts $\Ess(A)$ and $\Ess(B)$ are isomorphic (as non-indexed algebras).
\end{thm}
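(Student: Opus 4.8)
The plan is to prove both implications through one pivotal fact, that every finite algebra is categorically equivalent (written $\sim_c$) to its essential part, together with the observation that the isomorphism type of $\Ess(-)$ is an invariant of categorical equivalence. Throughout I use that isomorphic algebras are categorically equivalent and that $\sim_c$ is an equivalence relation, and I invoke McKenzie's description of categorical equivalence for finite algebras: $A\sim_c B$ holds if and only if $B$ is isomorphic to $A^{[n]}|_U$ for some $n$ and some neighbourhood $U$ that covers $A^{[n]}$ (an invertible idempotent). The two operations hidden in this description, namely passing to a matrix power and passing to a \emph{covering} idempotent neighbourhood, are exactly the moves that preserve $\sim_c$, and the whole argument amounts to showing that $\Ess(-)$ is blind to both.

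First I would establish the pivot $A\sim_c\Ess(A)$. The Proposition following Theorem~\ref{coveruniqueness} (cf.\ \cite[Theorem 3.3]{Kea}) supplies $m\in\N$ and $A'\in\mathcal{N}(\Ess(A)^{[m]})$ with $A'\cong A$; by the construction underlying that Proposition, $A'$ may be taken to be a covering idempotent neighbourhood of $\Ess(A)^{[m]}$. Applying the two equivalence-preserving moves gives
\[
\Ess(A)\ \sim_c\ \Ess(A)^{[m]}\ \sim_c\ \Ess(A)^{[m]}|_{A'}\ \cong\ A'\ \cong\ A .
\]
With the pivot in hand the reverse implication is immediate: if $\Ess(A)\cong\Ess(B)$ then $A\sim_c\Ess(A)\cong\Ess(B)\sim_c B$, so $A$ and $B$ are categorically equivalent.

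For the forward implication I would show that $\Ess(-)$ is invariant under the two generating operations, i.e.\ prove
\[
\Ess(A^{[n]})\cong\Ess(A)\qquad\text{and}\qquad \Ess(A|_U)\cong\Ess(A)
\]
whenever $U$ covers $A$; then, writing a categorical equivalence as $B\cong A^{[n]}|_U$, one chains $\Ess(B)\cong\Ess(A^{[n]}|_U)\cong\Ess(A)$. For the second identity, let $\{U_1,\dots,U_k\}$ be an irredundant non-refinable cover of $A$. Because $U$ covers $A$, the algebraic characterization of covers (Proposition~\ref{covercharacterization}) lets me produce neighbourhoods $W_1,\dots,W_k\subset U$ with each $W_i$ isomorphic to $U_i$ via a restriction of a unary term. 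The family $\{W_i\}$ is simultaneously an irredundant non-refinable cover of $A|_U$ and of $A$, and the matrix product $W_1\boxtimes\cdots\boxtimes W_k$ is the same whether formed inside $A|_U$ or inside $A$; Theorem~\ref{coveruniqueness}(2) then identifies it with $U_1\boxtimes\cdots\boxtimes U_k=\Ess(A)$. The matrix-power identity is handled analogously, by tracking how the coordinate neighbourhoods of $A$ induce an irredundant non-refinable cover of $A^{[n]}$ whose matrix product is again $\Ess(A)$.

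The hard part will be precisely these two invariance lemmas: the delicate point is verifying that the transported families $\{W_i\}$, and the coordinate-induced families in the matrix power, are genuinely irredundant and non-refinable covers, and that their $\boxtimes$-products are preserved under the change of ambient algebra. This is where Theorem~\ref{coveruniqueness} does the heavy lifting, since it both forces the number of irreducible pieces to match and pins the isomorphisms between pieces down to restrictions of unary terms. I expect the matrix-power case to be the more demanding of the two, because the underlying set grows under $[n]$ and one must argue that no genuinely new irreducible neighbourhood (not already accounted for by some $U_i$) can appear in an irredundant non-refinable cover of $A^{[n]}$.
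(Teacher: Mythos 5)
There is nothing in this paper to compare your proof against: Theorem \ref{characterization} is quoted from \cite[Theorem 4.4]{Iza} and no proof of it appears here. Judged on its own terms, your architecture is the standard one (and, as far as the cited source goes, essentially the actual one): McKenzie's theorem that categorical equivalence of finite algebras is generated by the two moves $A \mapsto A^{[n]}$ and $A \mapsto A|_U$ with $U$ an invertible (equivalently, by Proposition \ref{covercharacterization} with a single neighbourhood, covering) idempotent image; the pivot $A \sim_c \Ess(A)$; and invariance of $\Ess(-)$ under the two moves. The logic is non-circular, since you invoke only Theorem \ref{coveruniqueness}, Proposition \ref{covercharacterization}, the Proposition following Theorem \ref{coveruniqueness}, and external results that predate the statement.

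Two steps, however, rest on more than what this paper actually states, and you should flag them as imports rather than consequences. First, the Proposition you use for the pivot asserts only that some $A'\in\mathcal{N}(\Ess(A)^{[m]})$ is isomorphic to $A$; if $A'$ were merely a neighbourhood, the link $\Ess(A)^{[m]} \sim_c \Ess(A)^{[m]}|_{A'}$ would be unjustified, because restriction to a non-covering neighbourhood does not in general preserve categorical equivalence. The stronger claim (that $A'$ can be taken invertible/covering) is true and is what the underlying construction in \cite{Kea} delivers, but it must be cited in that strengthened form; your phrase ``by the construction underlying that Proposition'' is doing real work there. Second, producing the copies $W_i\subset U$ of the irreducibles $U_i$ is not a direct consequence of Proposition \ref{covercharacterization} alone: it is the lemma that every irreducible neighbourhood is mapped isomorphically, by a restriction of a unary term, into any covering neighbourhood --- the same lemma that drives the proof of Theorem \ref{coveruniqueness}. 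Granting these two ingredients, the rest of your sketch is completable: the identity of $W_1\boxtimes\cdots\boxtimes W_k$ computed in $A|_U$ and in $A$ follows by composing terms with the idempotents onto the $W_i$ (this part needs no covering hypothesis at all), irredundancy and non-refinability transport along unary-term isomorphisms, and the matrix-power lemma reduces to the covering-neighbourhood lemma via the diagonal $D=\{(a,\dots,a)\mid a\in A\}$, which is an invertible neighbourhood of $A^{[n]}$ with $A^{[n]}|_D$ isomorphic to $A$, so it is not the harder case you anticipate.
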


%%%%%%%%%%%%%%%%%%%%%%%%%%%%%%
\section{Classification of matrix products of primal algebras.}
\label{s-matrix-product-primal}
%%%%%%%%%%%%%%%%%%%%%%%%%%%%%%
In this section, we give a classification of essential algebras such that the irredundant non-refinable covers consist of primal algebras.
The classification is obtained by constructing a correspondence between such algebras and partial orders on finite sets.

First, we prepare some notations.
\begin{nt}
Let $n$ be a non-negative integer.
\begin{enumerate}
\item
We write $E_n$ the set of all clones $C$ on $\{0,1\}^n$ that satisfy the following conditions.
\begin{enumerate}
\item
For $f_1,\dots,f_n\in {\cal O}_{2}^{(m)}$,
the component-wise operation 
\[
\begin{pmatrix}
\begin{pmatrix} x_{11}\\ \vdots \\ x_{n1}\end{pmatrix}
,\ldots,
\begin{pmatrix} x_{1m}\\ \vdots \\ x_{nm}\end{pmatrix}
\end{pmatrix}
\mapsto
\begin{pmatrix}
f_1(x_{11},\dots,x_{1m})\\
\vdots \\
f_1(x_{n1},\dots,x_{nm})
\end{pmatrix}
\]
belongs to $C$.
\item
$\{\{0\}^{i-1}\times\{0,1\}\times\{0\}^{n-i}\}\mid i\in\{1,\dots,n\}\}$
is an irredundant non-refinable cover of $(\{0,1\},C)$.
\end{enumerate}
\item
We write $O_n$ the set of all partial order relations on $\{1,\dots,n\}$.
\end{enumerate}
\end{nt}

The next is the main theorem of this section.
\begin{thm}\label{matrix-product-primal}
Let $n$ be a non-negative integer.
Then the correspondence $C\mapsto \leq_C$ is
a bijection between $E_n$ and $O_n$,
where
\begin{align*}
&i_1\leq_C i_2\\
:\Longleftrightarrow&
\left[\prod_{i=1}^n r_i\in \Inv_2(C),r_{i_1}=\{0,1\}^2
\Rightarrow r_{i_2}=\{0,1\}^2\right].
\end{align*}
\end{thm}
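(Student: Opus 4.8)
The plan is to recast the order $\leq_C$ in terms of a lattice of congruences and then to show that condition (b) determines $C$ completely. First I would reduce to binary invariant relations. By condition (a) we have $\prod_{i=1}^{n}\mathcal{O}_2\subseteq C$, so $\Inv_2(C)\subseteq\Inv_2(\prod_{i=1}^n\mathcal{O}_2)$. A product relation $\prod_{i=1}^n r_i$ lying in $\Inv_2(C)$ must have each factor invariant to $\mathcal{O}_2$, and $\Inv_2(\mathcal{O}_2)=\{\Delta^{\{0,1\}},\{0,1\}^2\}$; hence the product relations in $\Inv_2(C)$ are exactly the relations $R_T:=\{(\mathbf a,\mathbf b)\in(\{0,1\}^n)^2\mid a_i=b_i\ \text{for all}\ i\in T\}$ for $T\subseteq\{1,\dots,n\}$ (here $r_i=\Delta^{\{0,1\}}$ iff $i\in T$). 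Writing $\mathcal{T}_C:=\{T\mid R_T\in\Inv_2(C)\}$, the defining condition unwinds to
\[
i_1\leq_C i_2\iff(\forall T\in\mathcal{T}_C)\ (i_2\in T\Rightarrow i_1\in T),
\]
from which reflexivity and transitivity of $\leq_C$ are immediate.

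Next I would establish the soft, purely order-theoretic part: that $\mathcal{T}_C$ is the family of down-sets of $\leq_C$. Each $R_T$ is a congruence of $(\{0,1\}^n,C)$, being the kernel of the projection onto the coordinates in $T$. One has $R_S\cap R_T=R_{S\cup T}$, and the congruence join $R_S\vee R_T$ (the transitive closure of $R_S\cup R_T$) is easily checked to equal $R_{S\cap T}$. Since $\Con(C)$ is closed under both meet and join, $\mathcal{T}_C$ is closed under $\cup$ and $\cap$ and contains $\emptyset$ and $\{1,\dots,n\}$; a sublattice of $(\mathcal{P}(\{1,\dots,n\}),\cup,\cap)$ with top and bottom is precisely the lattice of down-sets of the preorder $\leq_C$, and it is the down-sets of a \emph{partial} order exactly when it separates points. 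Thus $\leq_C\in O_n$ reduces to showing that $\mathcal{T}_C$ separates coordinates.

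The two remaining facts are the hard core, and both flow from condition (b) through Proposition \ref{covercharacterization}. For antisymmetry (point-separation): if distinct $i_1,i_2$ lay in the same members of $\mathcal{T}_C$, I would use that each axis $U_i$ is a two-element, hence primal, neighbourhood to manufacture — from any operation of $C$ that mixes these two coordinates — unary maps $f_k$ and a combining operation $\lambda\in C$ exhibiting $\{U_i\mid i\neq i_1\}$ as a cover of $\{0,1\}^n$, contradicting irredundancy. For maximality: the inclusion $C\subseteq\Pol(\{R_T\mid T\in\mathcal{T}_C\})=:C_{\leq_C}$ is clear, and $C_{\leq_C}$ is exactly the clone of operations whose $i$-th output component depends only on input coordinates $j$ with $j\leq_C i$; for the reverse inclusion I would reconstruct each such $t$ as a term operation of $C$ from $\prod_{i=1}^n\mathcal{O}_2$, the idempotents $e_i$ with $e_i(\{0,1\}^n)=U_i$, and $\lambda$, using that a single mixing operation together with all Boolean operations on each coordinate already generates everything compatible with $\mathcal{T}_C$. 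This generation/reconstruction argument is the step I expect to be the main obstacle: it is where the two-element primality of the axes and the precise strength of irredundancy are indispensable, and the $n=2$ case already shows it amounts to a delicate clone-lattice covering statement.

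Finally, for surjectivity I would build the inverse explicitly: given $\leq\in O_n$ set $C_\leq:=\Pol(\{R_T\mid T\ \text{a down-set of}\ \leq\})$, i.e.\ the operations whose $i$-th component depends only on coordinates $j\leq i$. I verify $C_\leq\in E_n$: it contains $\prod_{i=1}^n\mathcal{O}_2$, giving (a); the maps $e_i$ (keep coordinate $i$, zero the rest) are idempotents of $C_\leq$ with image $U_i$, and the operation $\lambda$ defined by $\lambda(\mathbf y^{(1)},\dots,\mathbf y^{(n)})_i:=y^{(i)}_i$ lies in $C_\leq$ and satisfies $\lambda(e_1(x),\dots,e_n(x))=x$, so by Proposition \ref{covercharacterization} the axes cover $\{0,1\}^n$; irredundancy holds because no down-set relation lets one recover a coordinate from the others, and non-refinability is then automatic from Lemma \ref{two-elements-cover} since $|U_i|=2$. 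A direct computation gives $\mathcal{T}_{C_\leq}=\{\text{down-sets of }\leq\}$, whence $\leq_{C_\leq}=\leq$. Combined with the maximality statement $C_{\leq_C}=C$ from the previous paragraph, the assignments $C\mapsto\leq_C$ and $\leq\mapsto C_\leq$ are mutually inverse, establishing the asserted bijection between $E_n$ and $O_n$.
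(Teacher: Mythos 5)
Your framework is sound as far as it goes, and parts of it genuinely match or improve on the paper: the identification of the binary product invariant relations with sets $T\subseteq\{1,\dots,n\}$, the lattice argument that $\mathcal{T}_C$ is closed under $\cup$ and $\cap$ (via congruence meet and join) and therefore consists of exactly the down-sets of $\leq_C$ — a slicker route than the paper's lemma, which builds the principal down-set relations by composing binary witnesses — and the surjectivity construction, which coincides with the paper's Lemma \ref{order-construction}. But the two steps you yourself flag as the ``hard core'' are exactly the ones you do not prove: antisymmetry of $\leq_C$ is left as a sketch (``manufacture unary maps $f_k$ and a combining $\lambda$''), and the maximality $C_{\leq_C}\subseteq C$ is explicitly deferred as ``the main obstacle.'' So as written the proposal establishes surjectivity but neither well-definedness nor injectivity. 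Moreover, your proposed route for maximality — generating every operation of $C_{\leq_C}$ as a term of $C$ — is the hard way and is not how this can reasonably be closed.

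The missing idea is to leave the operation side entirely and push everything through the Galois correspondence of Theorem \ref{pol-inv-finite}. Two facts do all the work. First, condition (a) forces \emph{every} invariant relation of \emph{every} arity — not only the binary product relations your argument controls — to be of the form $\prod_{i=1}^n\Delta_{{\cal E}_i}$: for $r\in\Inv_m(C)$, the component-wise operation $(x^{(1)},\dots,x^{(n)})\mapsto(x^{(1)}_1,x^{(2)}_2,\dots,x^{(n)}_n)$ lies in $C$ and recombines rows, giving $r=\prod_i\pi_i(r)$ with $\pi_i(r)\in\Inv_m(\mathcal{O}_2)$. You never state this, and without it no statement about $\Inv_2(C)$ can determine $C$. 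Second, if such a product violates order-compatibility, i.e.\ $i_1\leq_C i_2$ but ${\cal E}_{i_1}\not\supset{\cal E}_{i_2}$, then projecting onto a pair of positions in ${\cal E}_{i_2}\setminus{\cal E}_{i_1}$ (the paper's Lemma \ref{binarypart}) produces a binary invariant product with full $i_1$-component and diagonal $i_2$-component, contradicting $i_1\leq_C i_2$. Hence $\Inv(C)\subseteq R_{\leq_C}$; combined with your down-set result, extended to arity $m$ by intersecting binary relations as in Lemma \ref{construct-from-binary}, this gives $\Inv(C)=R_{\leq_C}$, and applying $\Pol$ together with $\Pol(\Inv(C))=C$ yields $C=\Pol(R_{\leq_C})$ — your maximality statement and injectivity in one stroke, with no generation argument at all. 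Antisymmetry then also falls out (note the argument above never used it, so it is valid for the preorder $\leq_C$): if distinct $i_1,i_2$ were $\leq_C$-equivalent, every relation in $\Inv(C)=R_{\leq_C}$ would have ${\cal E}_{i_1}={\cal E}_{i_2}$, and since $r\rest_{U_i}$ determines and is determined by ${\cal E}_i$, agreement of two invariant relations on $U_{i_2}$ would force agreement on $U_{i_1}$; thus $\{U_i\mid i\neq i_1\}$ covers by the very definition of cover, contradicting irredundancy — no appeal to Proposition \ref{covercharacterization} is needed.
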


We prove this theorem through this section.
The next lemma means the map $C\mapsto \leq_C$ is surjective.

\begin{lem}\label{order-construction}
Let $n$ be a non-negative integer.
Let $\leq$ be a partial order on $\{1,\dots,n\}$.
Let $R_{\leq}=\bigcup_{m\in\N} R_{\leq,m}$ be the following set of relations on $\{0,1\}^n$:
\[
R_{\leq,m}:=\left\{r\subset (\{0,1\}^n)^m\left| r=\prod_{i=1}^n\Delta_{{\cal E}_i}
\text{ s.t. }i_1\leq i_2\Rightarrow {\cal E}_{i_1}\supset{\cal E}_{i_2}\right.\right\}.
\]
Here, each ${\cal E}_i$ is an equivalence relation on $\{1,\dots,n\}$.
Then the following assertions hold.
\begin{enumerate}
\item
$R_{\leq}$ is a relational clone on $\{0,1\}^n$.
\item
For $i\in\{1,\dots,n\}$, $U_i:=\{0\}^{i-1}\times\{0,1\}\times\{0\}^{n-i}$
is a neighbourhood of $A:=(\{0,1\}^n,\Pol(R_\leq))$ and is a primal algebra.
\item
The family of neighbourhoods 
${\cal U}=\{U_i\mid i\in\{1,\dots,n\}\}$
is an irredundant non-refinable cover of $A$.
\item
The algebra $A$ is essential.
\end{enumerate}
\end{lem}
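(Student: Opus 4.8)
The plan is to verify the four assertions in order, since each builds naturally on the previous one. For assertion (1), I would check the two closure conditions defining a relational clone directly against the description of $R_{\leq}$. The diagonal $\Delta^{\{0,1\}^n}$ equals $\prod_{i=1}^n \Delta_{\cal E}$ with every ${\cal E}_i$ the full relation, so the monotonicity condition $i_1\leq i_2\Rightarrow {\cal E}_{i_1}\supset{\cal E}_{i_2}$ holds vacuously and $\Delta\in R_{\leq}$. The substantive work is closure under primitive positive definition. Here I would use the fact that each relation in $R_{\leq,m}$ is a product $\prod_{i=1}^n\Delta_{{\cal E}_i}$ of diagonal-type relations, so a primitive positive definition acts coordinate-wise: the existential quantifiers and conjunctions, when read in the $i$-th component, again produce a $\Delta_{{\cal E}_i'}$ for some equivalence relation ${\cal E}_i'$ on the new index set. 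The key point to track is that the monotonicity ${\cal E}_{i_1}\supset{\cal E}_{i_2}$ is preserved: since all the relations being combined satisfy it, and pp-definitions build the new ${\cal E}_i'$ from the old ${\cal E}_i$ by the same index-identification pattern in every coordinate, the containments survive.

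For assertion (2), I would first confirm $U_i\in{\cal N}(A)$ by exhibiting an idempotent term operation with image $U_i$; by Theorem 2.10, since $R_{\leq}=\Inv(\Pol(R_{\leq}))$, it suffices to produce an idempotent operation in $\Pol(R_{\leq})$ that projects the $j$-th coordinate to $0$ for $j\neq i$ and fixes the $i$-th. That the resulting $A|_{U_i}$ is primal amounts to showing its clone of term operations is all of ${\cal O}_{\{0,1\}}$ under the identification $U_i\cong\{0,1\}$; I expect this follows because $R_{\leq}$ imposes no nontrivial relational constraint on a single coordinate once the others are frozen at $0$, so the invariant relations of $A|_{U_i}$ are only the $\Delta_{\cal E}$, which by the Remark after Definition 2.8 characterizes primality.

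For assertion (3), I would use Proposition 2.13 (the cover characterization) to show ${\cal U}$ covers $A$: the component-wise operations available in $\Pol(R_{\leq})$ let me reconstruct $x$ from the $e_i(x)$, giving the required identity $\lambda(e_{i_1}f_1(x),\dots)=\id(x)$. Irredundancy should follow by checking that dropping any single $U_i$ leaves the $i$-th coordinate undetermined, which I would detect via a pair of invariant relations agreeing on all remaining $U_j$ but differing on $A$. Non-refinability is then immediate from Lemma 3.2, since every $|U_i|=2$. Finally, assertion (4) follows from Theorem 2.17(3): the component-wise structure of $\Pol(R_{\leq})$ supplies the operation $\lambda$ and the idempotents $e_i$ satisfying conditions (a)–(c), so $A$ is essential.

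The main obstacle I anticipate is assertion (1), specifically verifying that primitive positive definability preserves the monotonicity condition on the family $({\cal E}_i)$. The difficulty is bookkeeping: a pp-definition simultaneously introduces existentially quantified coordinates and conjoins several product relations, and I must argue that the equivalence relation ${\cal E}_i'$ read off in coordinate $i$ of the defined relation is computed by a procedure (intersection followed by the ``existential projection'' that takes the transitive closure of the induced relation on surviving indices) that is \emph{monotone} in ${\cal E}_i$. Establishing that this operation respects the containments $\supset$ uniformly across all coordinates — and in particular that existential projection does not break monotonicity — is where the real care is needed.
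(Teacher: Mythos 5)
Your plan follows essentially the same route as the paper's proof: the paper likewise treats (1) and (2) as routine, proves covering from the fact that distinct products of diagonal relations must differ in some component, witnesses irredundancy by a pair of invariant relations, gets non-refinability from Lemma \ref{two-elements-cover}, and derives (4) from Theorem \ref{characterization-essential}(3) using exactly the component-wise idempotents $e_i$ and the assembling operation you invoke (your use of Proposition \ref{covercharacterization} for covering is only a cosmetic repackaging of the same observation). One detail your sketch leaves implicit: the irredundancy witnesses must themselves lie in $R_{\leq}$, i.e.\ respect the monotonicity constraint, so the paper takes $r=\prod_{i}\Delta_{\mathcal{E}_i}$ and $s=\prod_{i}\Delta_{\mathcal{E}'_i}$ with $\mathcal{E}_i=\Delta$ exactly for $i\geq i_0$ (respectively $i>i_0$) and the full relation otherwise --- the upward-closed choice is what makes these legitimate elements of $R_{\leq,2}$, whereas the naive pair differing only in coordinate $i_0$ generally is not.
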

\begin{proof}
(1) and (2) are easily verified.

(3) $\prod_{i=1}^n \Delta_{{\cal E}_i}\neq \prod_{i=1}^n \Delta_{{\cal E}'_i}$
implies $\Delta_{{\cal E}_i}\neq\Delta_{{\cal E}'_i}$ for some $i$.
Thus ${\cal U}$ covers $A$.

Let $i_0\in\{1,\dots,n\}$.
Let 
\[
{\cal E}_i:=
\begin{cases}
\Delta & \text{if } i\geq i_0\\
\{0,1\}^2 & \text{otherwise},
\end{cases}
\]
\[
{\cal E}'_i:=
\begin{cases}
\Delta & \text{if } i> i_0\\
\{0,1\}^2 & \text{otherwise}
\end{cases}
\]
and $r:=\prod_{i=1}^n \Delta_{{\cal E}_i},s:=\prod_{i=1}^n \Delta_{{\cal E}'_i}$.
Then $r,s\in R_{\leq,2}$, $r\neq s$ and $r\rest_{U_i}=s\rest_{U_i}$ for $i\in\{1,\dots,n\}\sm\{i_0\}$ hold.
Thus the cover ${\cal U}$ is irredundant.
Non-refinability follows from Lemma \ref{two-elements-cover}.

(4) It is easily verified that the operations
$e_1,\dots,e_n$ and $d$ defined as
\[
e_i:
\begin{pmatrix} x_{1}\\ \vdots \\ x_{n}\end{pmatrix}
\mapsto 
\begin{pmatrix} \delta_{i1}x_1\\ \vdots \\ \delta_{in}x_{n}\end{pmatrix},
\]
where $\delta_{ij}x_j=0$ if $j\neq i$, $\delta_{ii}x_i=x_i$, and
\[
d:
\begin{pmatrix}
\begin{pmatrix} x_{11}\\ \vdots \\ x_{n1}\end{pmatrix}
,\ldots,
\begin{pmatrix} x_{1n}\\ \vdots \\ x_{nn}\end{pmatrix}
\end{pmatrix}
\mapsto
\begin{pmatrix}
x_{11}\\
\vdots \\
x_{nn}
\end{pmatrix}
\]
belong to $\Pol(R_\leq)$ and satisfy Equation (b) and (c) of Theorem \ref{characterization-essential} (3).
Essentiality follows from these facts and Theorem \ref{characterization-essential}.
\end{proof}

\begin{rem}
For a partial order $\leq$ on $\{1,\dots,n\}$, the clone
$\Pol(R_\leq)$ is described as follows:
Let $P_i$ denotes $\{0,1\}$, considered as the $i$-th component of $\{0,1\}^n$.
Then $\Pol_m(R_\leq)$ is the set of all operations
$(f_1,\dots,f_n)$, where $f_{i_0}:(\prod_{i\leq i_0} P_i)^m\ra P_{i_0}$,
act as
\[
(f_1,\dots,f_n):
\begin{pmatrix}
\begin{pmatrix} x_{11}\\ \vdots \\ x_{n1}\end{pmatrix}
,\ldots,
\begin{pmatrix} x_{1m}\\ \vdots \\ x_{nm}\end{pmatrix}
\end{pmatrix}
\mapsto
\begin{pmatrix}
f_1(x_{ij})_{i\leq 1,j\in\{1,\dots,m\}}\\
\vdots \\
f_n(x_{ij})_{i\leq n,j\in\{1,\dots,m\}}\\
\end{pmatrix}.
\]
\end{rem}

The clone $C=\Pol(R_\leq)$ constructed in this lemma satisfies $\leq_C=\leq$ (Notice that $\Inv_2(C)=R_{\leq,2}$ holds by Theorem \ref{pol-inv-finite}).
Thus the map defined in the statement of Theorem \ref{matrix-product-primal} is surjective.
Next, we show injectivity of the map defined in Theorem \ref{matrix-product-primal}.

\begin{lem}\label{binarypart}
Let $C\in E_n$.
Let $i_1,i_2\in\{1,\dots,n\}$. Assume there exists
$r=\prod_{i=1}^n r_i\in \Inv_m(C)$ such that
$r_{i_1}\not\supset r_{i_2}$, then $i_1\not\geq_C i_2$, namely, there exists
$r'=\prod_{i=1}^n r'_i\in\Inv_2(C)$ such that $r'_{i_1}=\Delta,r'_{i_2}=\{0,1\}^2$.
\end{lem}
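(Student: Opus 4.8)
The plan is to first pin down the exact shape of the factors $r_i$, and only then to manufacture $r'$ by a single primitive positive definition applied to $r$. The first observation is that condition (a) in the definition of $E_n$ forces each factor to be a diagonal relation. Indeed, fixing a coordinate $i$ and taking an arbitrary $g\in{\cal O}_2^{(m)}$ acting on the $i$-th coordinate together with projections on all other coordinates yields a component-wise operation, hence an element of $C$. Since $r=\prod_{i=1}^n r_i$ is invariant under all of these operations (and, excluding the degenerate case, each factor is nonempty so that arbitrary elements of $r_i$ can be realised as coordinate-$i$ parts of elements of $r$), each $r_i$ must be preserved by every operation in ${\cal O}_2$. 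As the invariant relations of the two-element primal algebra are exactly the diagonal relations, this gives $r_i=\Delta_{{\cal E}_i}$ for an equivalence relation ${\cal E}_i$ on the position set $\{1,\dots,m\}$.

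With the factors thus reduced to combinatorial data, I would translate the hypothesis. Since $\Delta_{\cal E}\subset\Delta_{{\cal E}'}$ holds precisely when ${\cal E}\supset{\cal E}'$, the assumption $r_{i_1}\not\supset r_{i_2}$ reads $\Delta_{{\cal E}_{i_2}}\not\subset\Delta_{{\cal E}_{i_1}}$, that is, ${\cal E}_{i_1}\not\subset{\cal E}_{i_2}$; so there is a pair $(p,q)\in{\cal E}_{i_1}\setminus{\cal E}_{i_2}$, and reflexivity of ${\cal E}_{i_2}$ forces $p\neq q$. I then define $r'$ from $r$ by the primitive positive definition that uses the positions $p$ and $q$ as the two free positions and places a distinct existentially quantified variable on each remaining position. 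Because $\Inv(C)$ is a relational clone by Theorem \ref{pol-inv-finite}, we immediately get $r'\in\Inv_2(C)$. The existential quantifier here ranges over tuples from $\{0,1\}^n$, so it splits coordinatewise, and each resulting conjunct constrains a single coordinate; hence $r'=\prod_{i=1}^n r'_i$, where $r'_i$ is the very same definition applied to $\Delta_{{\cal E}_i}$. A direct check shows that the only constraint surviving the existential quantification is the equality of the values at positions $p$ and $q$, so $r'_i=\Delta$ when $(p,q)\in{\cal E}_i$ and $r'_i=\{0,1\}^2$ otherwise. In particular $r'_{i_1}=\Delta$ and $r'_{i_2}=\{0,1\}^2$, which is exactly the witness required to conclude $i_1\not\geq_C i_2$.

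I expect the main obstacle to be the reduction in the first paragraph: recognising that condition (a) collapses the a priori arbitrary $m$-ary factors $r_i$ to diagonal relations $\Delta_{{\cal E}_i}$, i.e.\ to equivalence relations on positions, is what makes the whole argument combinatorial and tractable. The only remaining point demanding care is the verification that the chosen primitive positive definition commutes with the coordinatewise product, so that the factors of $r'$ may be computed one coordinate at a time from the factors of $r$; this rests on the fact that an existential over $\{0,1\}^n$-tuples decomposes into independent existentials over the single coordinates, each appearing in a single conjunct.
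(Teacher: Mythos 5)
Your proposal is correct and takes essentially the same route as the paper's proof: use the component-wise operations in $C$ to force each factor into the form $r_i=\Delta_{{\cal E}_i}$, pick $(p,q)\in {\cal E}_{i_1}\sm {\cal E}_{i_2}$, and obtain $r'$ as the projection of $r$ onto the two positions $p,q$, which factors coordinatewise into $\Delta$ at $i_1$ and $\{0,1\}^2$ at $i_2$. The only differences are cosmetic: the paper dismisses the arities $m\leq 1$ explicitly whereas you let the hypothesis rule them out, and your nonemptiness caveat is harmless because the component-wise constant operations lie in $C$, so no relation in $\Inv(C)$ can be empty.
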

\begin{proof}
Notice that $m$ must satisfy $m\geq 2$ since $C$ has all component-wise operations.
(A nullary or a unary invariant relation of a primal algebra $P$ must be $P^0$ or $P^1$ respectively.
Thus $r$ must be $\prod_{i=1}^n\{0,1\}^0$ or $\prod_{i=1}^n\{0,1\}^1$, hence $r_{i_1}\not\supset r_{i_2}$ cannot hold.)
By the assumption that $C$ has all component-wise operations, $r_i$ are described as in the form $r_i=\Delta_{{\cal E}_i}$.
${\cal E}_{i_1}\not\subset {\cal E}_{i_2}$ holds, since the assumption $r_{i_1}\not\supset r_{i_2}$.
Thus, there exists $(j_1,j_2)\in {\cal E}_{i_1}\sm{\cal E}_{i_2}$.
The projection
\begin{multline*}
r'=\pi_{\{j_1,j_2\}}(r)
:=
\left\{
\left(
\begin{pmatrix} x_{11}\\ \vdots \\ x_{n1} \end{pmatrix},
\begin{pmatrix} x_{12}\\ \vdots \\ x_{n2} \end{pmatrix}
\right)
\in (\{0,1\}^n)^2\right| \\
\left.\exists 
\begin{pmatrix}
\begin{pmatrix} y_{11}\\ \vdots \\ y_{n1} \end{pmatrix},
\dots,
\begin{pmatrix} y_{1m}\\ \vdots \\ y_{nm} \end{pmatrix}
\end{pmatrix}
\in r;
\begin{pmatrix} x_{11}\\ \vdots \\ x_{n1} \end{pmatrix}
=\begin{pmatrix} y_{1j_1}\\ \vdots \\ y_{nj_1} \end{pmatrix},
\begin{pmatrix} x_{12}\\ \vdots \\ x_{n2} \end{pmatrix}
=\begin{pmatrix} y_{1j_2}\\ \vdots \\ y_{nj_2} \end{pmatrix}
\right\}
%(x_{1j_1},\dots,x_{nj_1})
%(x_{1j_2},\dots,x_{nj_2})
%(x_{ij_1})_{i=1}^n,(x_{ij_2})_{i=1}^n)\in (\{0,1\}^n)^2\mid
%\exists ((y_{ij})_{i=1}^n)_{j=1}^m\in r;y_{ij_1}=x_{ij_1},y_{ij_2}=x_{ij_2}\}
\end{multline*}
of $r$ into components $\{j_1,j_2\}$ satisfies the condition stated in the lemma.
\end{proof}

\begin{lem}
Let $n$ be a non-negative integer, $C\in E_n$ and $i_0\in\{1,\dots,n\}$.
Then $r:=\prod_{i=1}^n r_i\in \Inv_2(C)$ holds, where
\[
r_i:=
\begin{cases}
\Delta& \text{if }i\leq_C i_0\\
\{0,1\}^2& \text{otherwise}.
\end{cases}
\]
\end{lem}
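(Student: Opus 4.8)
The plan is to build the relation $r$ of the statement as a primitive positive combination of binary invariant relations handed to us by the definition of $\leq_C$, the whole subtlety being the choice of the right pp-operation. Write $D:=\{i\mid i\leq_C i_0\}$, so that the asserted relation is $r=\prod_{i=1}^n r_i$ with $r_i=\Delta$ exactly for $i\in D$ and $r_i=\{0,1\}^2$ otherwise. I will produce a relation in $\Inv_2(C)$ whose diagonal coordinates are precisely the indices in $D$. If $D=\{1,\dots,n\}$, then $r$ is the diagonal relation $\Delta^{\{0,1\}^n}$, which lies in every relational clone, so I may assume $\{1,\dots,n\}\setminus D\neq\emptyset$.

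First I would extract witnesses. For each $j\notin D$ we have $j\not\leq_C i_0$, so by the very definition of $\leq_C$ there is a product relation $w^{(j)}=\prod_{i=1}^n w^{(j)}_i\in\Inv_2(C)$ with $w^{(j)}_j=\{0,1\}^2$ and $w^{(j)}_{i_0}\neq\{0,1\}^2$, i.e.\ $w^{(j)}_{i_0}=\Delta$ (each binary coordinate is $\Delta$ or $\{0,1\}^2$, since $C$ contains all component-wise operations, so these are the only invariant binary relations on a two-element primal factor). Applying the definition of $\leq_C$ once more, for every $i\in D$ the implication ``$w^{(j)}_i=\{0,1\}^2\Rightarrow w^{(j)}_{i_0}=\{0,1\}^2$'' holds; as $w^{(j)}_{i_0}=\Delta$, this forces $w^{(j)}_i=\Delta$. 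Thus the set of diagonal coordinates of $w^{(j)}$ contains $D$ and excludes $j$.

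The key step is to combine the $w^{(j)}$ by relational composition rather than intersection. Relational composition $t\circ t'=\{(x,z)\mid\exists y\,(x,y)\in t\wedge(y,z)\in t'\}$ is a primitive positive definition, so $\Inv_2(C)$ is closed under it; and on product relations it acts coordinatewise, with $\Delta\circ\Delta=\Delta$ while any composite involving a $\{0,1\}^2$ factor equals $\{0,1\}^2$. Hence the diagonal-coordinate set of a composite of product relations is the intersection of those of the factors. Form $s$ as the composition of all $w^{(j)}$ for $j\notin D$ (in any fixed order). Then $s\in\Inv_2(C)$ is a product relation whose diagonal coordinates are $\bigcap_{j\notin D}\{i\mid w^{(j)}_i=\Delta\}$. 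This intersection contains $D$ by the previous paragraph, and for each $j\notin D$ it omits $j$ because $w^{(j)}_j=\{0,1\}^2$; therefore it equals $D$ exactly. Consequently $s=r$, and $r\in\Inv_2(C)$ as claimed.

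I expect the only genuine obstacle to be the observation in the third paragraph: the naive attempt to intersect the witnesses fails, because intersection of product relations \emph{unions} the diagonal-coordinate sets and would in general force too many coordinates to be diagonal, making the result a proper subrelation of $r$. Passing to relational composition, which \emph{intersects} the diagonal-coordinate sets, is exactly what makes the diagonal part land on $D$ and nothing more. It is worth noting that the argument uses only the defining implication of $\leq_C$ and closure of $\Inv_2(C)$ under pp-definitions, and in particular does not presuppose that $\leq_C$ is a partial order.
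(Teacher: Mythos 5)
Your proof is correct and takes essentially the same route as the paper: the paper's own argument also picks, for each $j\not\leq_C i_0$, a witness product relation in $\Inv_2(C)$ that is full at coordinate $j$ and diagonal at $i_0$, and then forms the relational composition $s_{i_1}\circ\cdots\circ s_{i_k}$ of all these witnesses to obtain $r$. Your write-up only adds details the paper leaves implicit, namely that the contrapositive of the definition of $\leq_C$ forces each witness to be diagonal on all of $D$, that composition of product relations acts coordinatewise (so diagonal-coordinate sets intersect), and the trivial case $D=\{1,\dots,n\}$.
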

\begin{proof}
For $i\in\{1,\dots,n\}$ that satisfies $i\not\leq_C i_0$,
there exists $s_i=\prod_{i'=1}^ns_{ii'}\in\Inv(C)$ such that
$s_{ii_0}=\Delta$, $s_{ii}=\{0,1\}^2$ by the definition of $\leq_C$.
Let $\{i_1,\dots,i_k\}$ be an enumeration of $\{i\in\{1,\dots,n\}\mid i\not\leq_C i_0\}$. Then $r=s_{i_1}\circ\dots \circ s_{i_k}\in \Inv(C)$.
\end{proof}

\begin{lem}\label{construct-from-binary}
Let $n$ be a non-negative integer, $C\in E_n$.
Let $m$ be a non-negative integer and ${\cal E}_1,\dots,{\cal E}_n$ be
equivalence relations on $\{1,\dots,m\}$ that satisfy the condition
\[
i_1\leq_C i_2\ \Longrightarrow \ {\cal E}_{i_1}\supset {\cal E}_{i_2}.
\]
Then $\prod_{i=1}^n\Delta_{{\cal E}_{i}}\in\Inv_m(C)$ holds.
\end{lem}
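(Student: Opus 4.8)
The plan is to realize $\prod_{i=1}^n\Delta_{{\cal E}_i}$ as a relation obtained from the binary relations supplied by the preceding lemma through a single primitive positive definition, and then to apply the fact that $\Inv(C)$ is a relational clone (Theorem \ref{pol-inv-finite}), so that it is closed under such definitions. Write $A=\{0,1\}^n$, and for each $i_0\in\{1,\dots,n\}$ let $r^{(i_0)}=\prod_{i=1}^n r_i\in\Inv_2(C)$ be the binary relation produced by the lemma immediately preceding this one, where $r_i=\Delta$ if $i\leq_C i_0$ and $r_i=\{0,1\}^2$ otherwise. Concretely, a pair of column vectors lies in $r^{(i_0)}$ exactly when their $i$-th entries agree for every $i$ with $i\leq_C i_0$. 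These relations are the only external input I would need.

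First I would introduce the candidate relation
\[
T:=\left\{(\mathbf{x}_1,\dots,\mathbf{x}_m)\in A^m\;\middle|\;\bigwedge_{i_0=1}^n\ \bigwedge_{(j_1,j_2)\in{\cal E}_{i_0}}(\mathbf{x}_{j_1},\mathbf{x}_{j_2})\in r^{(i_0)}\right\},
\]
where $\mathbf{x}_j=(x_{1j},\dots,x_{nj})$ denotes the $j$-th column. This is visibly a primitive positive definition of $T$ from the relations $r^{(i_0)}$: one takes $N=m$ (no existentially quantified coordinates) and, for the conjunct indexed by $(i_0,(j_1,j_2))$, the map $f\colon\{1,2\}\ra\{1,\dots,m\}$ with $1\mapsto j_1$, $2\mapsto j_2$. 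Since the index set of conjuncts is finite and each $r^{(i_0)}\in\Inv(C)$, membership $T\in\Inv_m(C)$ follows at once from Theorem \ref{pol-inv-finite}. The substantive content is therefore entirely shifted onto the set-theoretic identity $T=\prod_{i=1}^n\Delta_{{\cal E}_i}$.

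The hard (though short) part will be verifying this identity, and it is exactly here that the monotonicity hypothesis is consumed. Unravelling the definitions, $\mathbf{x}\in T$ means $x_{ij_1}=x_{ij_2}$ for every $i_0$, every $(j_1,j_2)\in{\cal E}_{i_0}$ and every $i\leq_C i_0$, whereas $\mathbf{x}\in\prod_{i=1}^n\Delta_{{\cal E}_i}$ means $x_{ij_1}=x_{ij_2}$ for every $i$ and every $(j_1,j_2)\in{\cal E}_i$. For the inclusion $\prod_{i=1}^n\Delta_{{\cal E}_i}\subset T$ I would fix $i_0$, a pair $(j_1,j_2)\in{\cal E}_{i_0}$ and some $i\leq_C i_0$; the assumption $i\leq_C i_0\Rightarrow{\cal E}_i\supset{\cal E}_{i_0}$ then yields $(j_1,j_2)\in{\cal E}_i$, giving the desired equality. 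For the reverse inclusion, given $i$ and $(j_1,j_2)\in{\cal E}_i$, I would simply take $i_0=i$ and invoke reflexivity of $\leq_C$.

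The case of small $m$, where ${\cal E}_i$ imposes no nontrivial identification, is handled by the same formula with a vacuous conjunction, so that $T=A^m$ and no separate argument is needed. I expect the only point requiring genuine care to be the faithful translation between the matrix display of elements of $A^m$ and the entrywise equalities, together with confirming that the displayed defining formula is literally of the primitive positive shape demanded by the relational clone axioms; everything beyond the two inclusions above is routine bookkeeping.
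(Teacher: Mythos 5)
Your proposal is correct and takes essentially the same route as the paper: the paper defines, for each $i_0$ and each pair $(j_1,j_2)\in{\cal E}_{i_0}$, the $m$-ary relation $\Delta^{i_0}_{j_1,j_2}$ (which is precisely your conjunct $(\mathbf{x}_{j_1},\mathbf{x}_{j_2})\in r^{(i_0)}$, pp-defined from the binary relation of the preceding lemma) and then writes $\prod_{i=1}^n\Delta_{{\cal E}_i}$ as the intersection of these, which is exactly your relation $T$ built by one combined primitive positive definition. The only difference is presentational: the paper asserts the set identity without proof, while you verify both inclusions explicitly (monotonicity of the ${\cal E}_i$ for one direction, reflexivity of $\leq_C$ for the other), which is a welcome addition rather than a deviation.
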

\begin{proof}
For $i_0\in\{1,\dots,n\}$ and $j_1,j_2\in\{1,\dots,m\}$, we put
\[
\Delta^{i_0}_{j_1,j_2}:=
\left\{\left.
\begin{pmatrix}
\begin{pmatrix} x_{11}\\ \vdots \\ x_{n1}\end{pmatrix}
,\ldots,
\begin{pmatrix} x_{1m}\\ \vdots \\ x_{nm}\end{pmatrix}
\end{pmatrix}
\in (\{0,1\}^n)^m \right|
i\leq i_0\Rightarrow x_{ij_1}=x_{ij_2}
\right\}.
\]
By the previous lemma, $\Delta^{i_0}_{j_1,j_2}\in \Inv_m(C)$ holds.
Thus,
\[
\prod_{i=1}^n\Delta_{{\cal E}_i}
=\bigcap_{\substack{i\in\{1,\dots,n\}\\ (j_1,j_2)\in{\cal E}_i}} \Delta^i_{j_1,j_2}\in\Inv_m(C).
\]
\end{proof}

By these lemmas, Theorem \ref{matrix-product-primal} is proved.
\begin{cor}
The map $C\mapsto \leq_C$ defined in Theorem \ref{matrix-product-primal}
is injective.
\end{cor}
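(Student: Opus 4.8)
The plan is to show that the data $\leq_C$ determines the clone $C$ completely, which yields injectivity at once. Since everything takes place over the finite set $\{0,1\}^n$, the Galois connection of Theorem~\ref{pol-inv-finite} gives $C=\Pol(\Inv(C))$, so it suffices to prove that $\Inv(C)$ is determined by $\leq_C$. In fact I would prove the sharper statement that, for every $m\in\N$,
\[
\Inv_m(C)=R_{\leq_C,m}=\left\{\prod_{i=1}^n \Delta_{{\cal E}_i}\mid i_1\leq_C i_2\Rightarrow {\cal E}_{i_1}\supset{\cal E}_{i_2}\right\},
\]
where $R_{\leq_C,m}$ is the set introduced in Lemma~\ref{order-construction}. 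Granting this for all $m$ gives $\Inv(C)=R_{\leq_C}$, which depends only on the order $\leq_C$; hence $C=\Pol(R_{\leq_C})$ is a function of $\leq_C$ alone, and the map $C\mapsto\leq_C$ is injective.

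For the inclusion $R_{\leq_C,m}\subset\Inv_m(C)$ there is nothing new to do: this is precisely Lemma~\ref{construct-from-binary}, which produces $\prod_{i=1}^n\Delta_{{\cal E}_i}$ as an invariant relation whenever the family $({\cal E}_i)_i$ is compatible with $\leq_C$. For the reverse inclusion I would start from an arbitrary $r\in\Inv_m(C)$. Because $C$ contains all component-wise operations, $r$ equals the product $\prod_{i=1}^n r_i$ of its coordinate projections, and each factor $r_i$, being invariant under ${\cal O}_2$, has the form $\Delta_{{\cal E}_i}$ for an equivalence relation ${\cal E}_i$ on $\{1,\dots,m\}$ (the same reduction that underlies the proof of Lemma~\ref{binarypart}). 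It then remains to verify the order-compatibility $i_1\leq_C i_2\Rightarrow{\cal E}_{i_1}\supset{\cal E}_{i_2}$.

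The crux is to read off this compatibility from Lemma~\ref{binarypart} after correctly translating between the two orderings. Recall that ${\cal E}_{i_1}\supset{\cal E}_{i_2}$ is equivalent to $\Delta_{{\cal E}_{i_1}}\subset\Delta_{{\cal E}_{i_2}}$, i.e.\ $r_{i_1}\subset r_{i_2}$, since a larger equivalence relation imposes more equalities and hence cuts out a smaller diagonal. Suppose, for contradiction, that $i_1\leq_C i_2$ yet $r_{i_1}\not\subset r_{i_2}$, that is, $r_{i_2}\not\supset r_{i_1}$. Applying Lemma~\ref{binarypart} to the index pair $(i_2,i_1)$ in place of $(i_1,i_2)$ yields $i_2\not\geq_C i_1$, i.e.\ $i_1\not\leq_C i_2$, contradicting the hypothesis. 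Hence $r_{i_1}\subset r_{i_2}$ for all $i_1\leq_C i_2$, so $r\in R_{\leq_C,m}$ and the reverse inclusion holds.

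The routine points — the factorization of an invariant relation into a product under component-wise closure, and the edge cases $m\leq 1$, where every factor is forced to be the full relation and compatibility is automatic — cause no difficulty. The step I would treat as the main obstacle is the bookkeeping of the two opposite orderings: the containment order on the equivalence relations ${\cal E}_i$ is reversed when one passes to the diagonals $\Delta_{{\cal E}_i}$, so Lemma~\ref{binarypart} must be invoked with the indices in the swapped order to produce the negation $i_1\not\leq_C i_2$ rather than its converse.
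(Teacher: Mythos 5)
Your proposal is correct and follows essentially the same route as the paper's own proof: one inclusion is Lemma~\ref{construct-from-binary}, the reverse inclusion reduces an arbitrary invariant relation to a product of diagonals and invokes Lemma~\ref{binarypart} (with the index pair swapped) to contradict order-compatibility, and injectivity then follows from $\Inv(C)=R_{\leq_C}$ together with the Galois connection of Theorem~\ref{pol-inv-finite}. Your explicit bookkeeping of the reversal between containment of equivalence relations and containment of the diagonals $\Delta_{{\cal E}_i}$, and of the resulting index swap in Lemma~\ref{binarypart}, is exactly the step the paper leaves implicit.
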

\begin{proof}
By Lemma \ref{construct-from-binary}, $\Inv(C)\supset R_{\leq_C}$ holds,
where $R_{\leq_C}$ is the relational clone defined in Lemma \ref{order-construction}.
If $\Inv(C)\supsetneq R_{\leq_C}$,
there exist $m\in\N$, $i_1,i_2\in\{1,\dots,n\}$
and equivalence relations ${\cal E}_1,\dots,{\cal E}_n$ on $\{1,\dots,m\}$ 
that satisfy $i_1\leq_C i_2$, ${\cal E}_{i_1}\not\supset{\cal E}_{i_2}$ 
and $\prod_{i=1}^n \Delta_{{\cal E}_i}\in\Inv(C)$.
By Lemma \ref{binarypart}, it implies $i_1\not\leq_C i_2$, which is contradiction.
\end{proof}
The proof of Theorem \ref{matrix-product-primal} is completed.

%%%%%%%%%%%%%%%%%%%%%%
\section{Combinatorial description of congruence primal arithmetical algebras}
\label{s-cong-primal-arith}
%%%%%%%%%%%%%%%%%%%%%%
In this section, we give a characterization of algebras that are categorically equivalent to an essential algebra belonging to $\bigcup_{n\in \N}E_n$, and give a concrete construction of such algebras.

\subsection{Characterization of the algebras}
In this subsection, we prove that a finite algebra is categorically equivalent to a member of $\bigcup_{n\in \N}E_n$ if and only if the algebra is congruence primal and arithmetical.

First, we show members of $E_n$ are congruence primal and arithmetical.
%First, we prove that such algebras are characterized by congruence primality  together with arithmecity.

\begin{nt}
For $n\in\N$ and $\leq\in O_n$, we write $E_\leq:=(\{0,1\}^n,\Pol(R_\leq))$. We say $E_\leq$ the essential algebra corresponding to $\leq$.
\end{nt}

\begin{prop}\label{arith-direct}
Let $n$ be a non-negative integer and $\leq$ be a partial order on $\{1,\dots,n\}$. Then
the algebra $E_\leq$ is congruence primal and arithmetical.
\end{prop}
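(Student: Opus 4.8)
The plan is to treat the two properties separately, in each case exploiting the explicit description of $\Pol(R_\leq)$ obtained in this section. Throughout I use that, by Theorem \ref{pol-inv-finite}, $\Inv(\Clo(E_\leq))=\Inv(\Pol(R_\leq))=R_\leq$, so every question about the clone can be phrased in terms of the relational clone $R_\leq$.

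\textbf{Arithmeticity.} I would simply produce a Pixley term. On $\{0,1\}$ the ternary discriminator $d$, defined by $d(x,y,z)=z$ if $x=y$ and $d(x,y,z)=x$ otherwise, is a Pixley operation: it satisfies $d(x,y,x)=x$, $d(x,y,y)=x$ and $d(y,y,x)=x$. Let $\hat d$ be its coordinate-wise extension to $\{0,1\}^n$. Any relation of the form $\Delta_{\cal E}$ on $\{0,1\}$ is invariant under every operation of $\{0,1\}$ (entries that are forced equal on $\cal E$-blocks stay equal after applying an operation), so $\hat d$ preserves each factor $\Delta_{{\cal E}_i}$ and hence every product $\prod_{i=1}^n\Delta_{{\cal E}_i}$; thus $\hat d\in\Pol(R_\leq)=\Clo(E_\leq)$ (this is also immediate from condition (a) in the definition of $E_n$, which $\Pol(R_\leq)$ satisfies). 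Since $\hat d$ obeys the Pixley identities in each coordinate, it obeys them on $\{0,1\}^n$, so $E_\leq$ has a Pixley term and is therefore arithmetical.

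\textbf{Congruence primality.} This amounts to showing $\Clo(E_\leq)=\Pol(\Con(E_\leq))$, equivalently, via the Galois correspondence of Theorem \ref{pol-inv-finite}, that the congruences generate $\Inv(\Clo(E_\leq))=R_\leq$ as a relational clone. First I would identify the congruences. A congruence is precisely a compatible equivalence relation, i.e.\ an equivalence relation lying in $\Inv_2(\Clo(E_\leq))=R_{\leq,2}$. Conversely, every binary member of $R_\leq$ has the form $\prod_{i=1}^n r_i$ with each $r_i$ equal to the diagonal $\Delta$ or to $\{0,1\}^2$, and such a product is automatically an equivalence relation on $\{0,1\}^n$. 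Hence $\Con(E_\leq)=R_{\leq,2}$, the binary part of $R_\leq$.

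It then remains to show that $R_{\leq,2}$ generates all of $R_\leq$, which is essentially Lemma \ref{construct-from-binary}: every $m$-ary relation $\prod_{i=1}^n\Delta_{{\cal E}_i}\in R_\leq$ is the intersection of the relations $\Delta^{i_0}_{j_1,j_2}$ over the pairs $(j_1,j_2)\in{\cal E}_{i_0}$. Each $\Delta^{i_0}_{j_1,j_2}$ is in turn a primitive positive definition from a single binary relation of $R_\leq$: it merely asserts that the $j_1$-th and $j_2$-th columns are related by the binary relation $\prod_{i=1}^n r_i$ with $r_i=\Delta$ for $i\leq_C i_0$ and $r_i=\{0,1\}^2$ otherwise, the remaining columns being unconstrained. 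As this binary relation lies in $R_{\leq,2}=\Con(E_\leq)$, every relation of $R_\leq$ is pp-definable from congruences, so the relational clone generated by $\Con(E_\leq)$ is exactly $R_\leq$. Applying $\Pol$ yields $\Pol(\Con(E_\leq))=\Pol(R_\leq)=\Clo(E_\leq)$, which is congruence primality.

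\textbf{Anticipated obstacle.} The genuinely delicate point is the correct identification of the congruences, and in particular keeping straight the order-reversing passage between the equivalence relations ${\cal E}_i$ and the relations $\Delta_{{\cal E}_i}$ (so that the binary part of $R_\leq$ really consists of equivalence relations, and so that the order $\leq_C$ attached to $\Pol(R_\leq)$ matches $\leq$). Once the congruences are pinned down as $R_{\leq,2}$, the generation statement is supplied by Lemma \ref{construct-from-binary}, and both properties follow with essentially no further computation.
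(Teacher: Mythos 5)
Your proof is correct; it coincides with the paper's argument on one half and genuinely departs from it on the other. For congruence primality you do essentially what the paper does: the paper observes that $\Inv(C)$, $C=\Pol(R_\leq)$, is generated (via the intersection-of-$\Delta^{i_0}_{j_1,j_2}$ argument in the proof of Lemma \ref{construct-from-binary}) by the $n$ binary relations $r_i$, which are congruences, and concludes $\Pol(\Con(E_\leq))=\Pol(R_\leq)=\Clo(E_\leq)$ by the Galois correspondence of Theorem \ref{pol-inv-finite}; you run the same generation argument with the full set $\Con(E_\leq)=R_{\leq,2}$ of congruences (an identification the paper itself records later, in Proposition \ref{congruence-latice-of-essential}), and your writing $\leq_C$ in place of $\leq$ is harmless since $\leq_C=\leq$ for $C=\Pol(R_\leq)$, as noted after Lemma \ref{order-construction}. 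For arithmeticity, however, the paper stays inside relational structure theory: the irredundant non-refinable cover of $E_\leq$ consists of primal, hence arithmetical, neighbourhoods, and arithmeticity passes from a cover to the whole algebra --- a localization fact the paper invokes without proof. You replace this with an explicit Pixley term: the coordinate-wise discriminator $\hat d$ preserves every relation $\prod_{i=1}^n\Delta_{{\cal E}_i}$ (all component-wise operations do), so $\hat d\in\Pol(R_\leq)=\Clo(E_\leq)$, and the Pixley identities hold coordinate-wise, hence on $\{0,1\}^n$. Your route buys self-containedness and concreteness; the paper's buys brevity within its framework and a more general principle (arithmeticity descends from any cover by arithmetical neighbourhoods). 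The one step you should make explicit is that ``has a Pixley term operation $\Rightarrow$ arithmetical'' is here applied to a single algebra rather than a variety; this is valid --- the discriminator is a Mal'cev operation, $p(x,p(x,y,z),z)$ is a majority operation, and the standard permutability and distributivity arguments use only the identities on elements of the algebra --- but it deserves a sentence rather than being left implicit.
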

\begin{proof}
Put $C=\Pol(R_\leq)$.
As shown in the proof of Lemma \ref{construct-from-binary}, $\Inv(C)$ is generated by $\{r_i\mid i\in \{1,\dots,n\}\}$, where $r_i:=\prod_{i'=1}^n r_{i,i'}\in \Inv_2(C)$ and
\[
r_{i,i'}:=
\begin{cases}
\Delta& \text{if }i'\leq i\\
\{0,1\}^2& \text{otherwise}.
\end{cases}
\]
Thus $E_\leq$ is congruence primal.
All neighbourhoods belonging to an irredundant non-refinable cover of $E_\leq$ are primal, particularly are arithmetical.
An algebra having a cover consists of arithmetical algebras is arithmetical.
Thus, $E_\leq$ is arithmetical.
\end{proof}

Next, we show the converse. For a congruence primal arithmetical algebra $A$, the sketch of the construction of an algebra categorically equivalent to $A$ is as follows:
First, we consider the congruence lattice $\Con(A)$ of $A$. Next, we consider the subposet $P$ of $\Con(A)$ consists of all meet irreducible elements.
Then the matrix product of primal algebras corresponding to the reverse poset of $P$ is the algebra we want to obtain.

Let us start to describe more precise construction.
\begin{nt}
\mbox{}
\begin{itemize}
\item
For a poset $P$, we write $^rP$ the poset such that the underlying set is the same as $P$ and the order is defined as
\[
a \leq_{^rP} b\ :\Longleftrightarrow\ b\leq_P a,
\]
where $\leq_P$ is the partial order of $P$.
\item
For a distributive lattice $(L,\leq)$, $\Ir(L)$ denotes the set of all join irreducible elements of $L$.
We consider $\Ir(L)$ to be a poset equipped with the restricted partial order $\leq\rest_{\Ir(L)}=\leq\cap \Ir(L)^2$.
\item
For a finite poset $(X,\leq)$, $\J(X)$ denotes the set of all subsets $Y$ of $X$
consist of pairwise incomparable elements, i.e.,
\[
\J(X):=\{Y\subset X\mid y_1,y_2\in Y,y_1\neq y_2\Rightarrow y_1\not\leq y_2
\}.
\]
For $Y_1,Y_2\in\J(X)$, we define $Y_1\leq Y_2$ by
\[
\forall y_1\in Y_1\exists y_2\in\ Y_2; y_1\leq y_2.
\]
\end{itemize}
\end{nt}

The next correspondence between finite posets and finite distributive lattices seems to be folklore.
\begin{prop}
The following hold.
\begin{enumerate}
\item
For a finite poset $X$, $\J(X)$ is a distributive lattice.
\item
For a finite distributive lattice $L$, $\J(\Ir(L))$ is isomorphic to $L$.
\item
For a finite poset $X$, The poset $\Ir(\J(X))$ is isomorphic to $X$.
\end{enumerate}
\end{prop}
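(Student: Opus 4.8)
The plan is to prove this as the standard Birkhoff duality between finite posets and finite distributive lattices, adapted to the specific encoding $\J(X)$ used here, where elements of the lattice are antichains rather than down-sets. Before proving the three parts, I would first record the bijection that makes everything transparent: an antichain $Y\in\J(X)$ corresponds to the down-set $\downarrow\! Y := \{x\in X\mid \exists y\in Y; x\leq y\}$ that it generates, and conversely every down-set $D$ determines the antichain $\max(D)$ of its maximal elements; in a finite poset these two operations are mutually inverse bijections between $\J(X)$ and the lattice $\mathcal{D}(X)$ of down-sets ordered by inclusion. I would verify that the order $Y_1\leq Y_2$ defined in the notation (namely $\forall y_1\in Y_1\,\exists y_2\in Y_2; y_1\leq y_2$) corresponds exactly to $\downarrow\! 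Y_1\subset\downarrow\! Y_2$, so that $\J(X)$ is order-isomorphic to $(\mathcal{D}(X),\subset)$.

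For part (1), I would then transport the lattice structure: $\mathcal{D}(X)$ is closed under arbitrary unions and intersections and is therefore a lattice, and it is distributive because unions and intersections of sets satisfy the distributive law. Concretely, the join of $Y_1,Y_2$ in $\J(X)$ is $\max(\downarrow\! Y_1\cup\downarrow\! Y_2)$ and the meet is $\max(\downarrow\! Y_1\cap\downarrow\! Y_2)$; distributivity of $\J(X)$ follows immediately from that of $\mathcal{D}(X)$ via the isomorphism. For part (3), I would identify the join irreducibles of $\J(X)$ under this correspondence: a down-set is join irreducible in $\mathcal{D}(X)$ precisely when it is a principal down-set $\downarrow\! x$ for a single $x\in X$, equivalently when the corresponding antichain is a singleton $\{x\}$. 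The map $x\mapsto\{x\}$ is then a bijection $X\to\Ir(\J(X))$, and it is an order isomorphism because $\{x_1\}\leq\{x_2\}$ in $\J(X)$ holds iff $x_1\leq x_2$ in $X$, directly from the definition of the order on $\J(X)$.

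For part (2), given a finite distributive lattice $L$, I would invoke the classical Birkhoff representation: every element $a\in L$ is the join of the join irreducibles below it, and the assignment $a\mapsto\{p\in\Ir(L)\mid p\leq a\}$ sends $a$ to a down-set of $\Ir(L)$; taking maximal elements gives an antichain in $\Ir(L)$, yielding a map $L\to\J(\Ir(L))$. I would check this is a bijection (injectivity from the fact that each $a$ is recovered as the join of the irreducibles below it, surjectivity from the fact that any antichain's down-set arises as the set of irreducibles below its join) and that it preserves order, hence is a lattice isomorphism.

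The main obstacle I anticipate is not any single deep step but rather the bookkeeping of keeping the two representations — antichains versus down-sets — cleanly separated and consistently translated, together with carefully justifying the one genuinely nontrivial lattice-theoretic input, namely that in a finite distributive lattice an element is join irreducible iff the set of irreducibles below it has a unique maximal element and that join irreducibles are join-prime. Since the proposition is flagged as folklore, I would expect the author to either cite a standard reference for Birkhoff's theorem or dispatch these verifications briskly; the cleanest route is to establish the $\J(X)\iso\mathcal{D}(X)$ isomorphism once and then let all three assertions reduce to well-known statements about the down-set lattice.
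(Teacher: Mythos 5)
Your proposal is correct, but there is nothing in the paper to compare it against: the author explicitly labels this proposition as folklore and gives no proof whatsoever, immediately moving on to use it in Proposition 4.4 (the computation of $\Con(E_\leq)$). So your write-up, if included, would be supplying a proof the paper omits. Your route is the standard one and it works: translating antichains to the down-sets they generate, checking that $Y_1\leq Y_2$ iff $\downarrow Y_1\subset \downarrow Y_2$, transporting distributivity from the down-set lattice ${\cal D}(X)$, identifying the join irreducibles of ${\cal D}(X)$ with principal down-sets for part (3), and invoking join-primality of join irreducibles in a finite distributive lattice for the surjectivity step in part (2). You correctly isolate join-primality as the only nontrivial lattice-theoretic input. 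Two small points deserve explicit care if you write this out in full: first, part (3) silently requires the convention that the bottom element (here the empty antichain, which does belong to $\J(X)$) is \emph{not} counted as join irreducible, since otherwise $\Ir(\J(X))$ would have one extra element and the claimed isomorphism with $X$ would fail; second, in part (2) a bijection between lattices that preserves order in one direction only need not be a lattice isomorphism, so you should note that your map and its inverse are both order-preserving (which is immediate here, but should be said).
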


\begin{prop}\label{congruence-latice-of-essential}
Let $n$ be a non-negative integer and $\leq$ be a partial order on $\{1,\dots,n\}$.
Then the congruence lattice $\Con(E_\leq)$ is isomorphic to $\J(^r(\{1,\dots,n\},\leq))$.
\end{prop}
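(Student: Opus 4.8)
The plan is to pin down the congruences of $E_\leq$ explicitly, read off the lattice operations from their coordinate description, and then identify the resulting lattice with $\J(^r(\{1,\dots,n\},\leq))$ via complementation of down-sets.

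First I would determine $\Con(E_\leq)$. Congruences are exactly the binary compatible (invariant) relations that happen to be equivalence relations. By Lemma \ref{order-construction}(1) and Theorem \ref{pol-inv-finite} we have $\Inv(\Pol(R_\leq))=R_\leq$, so in particular $\Inv_2(\Pol(R_\leq))=R_{\leq,2}$, and hence every congruence has the form $\prod_{i=1}^n\Delta_{\mathcal{E}_i}$ with each $\mathcal{E}_i$ an equivalence relation on $\{1,2\}$. In binary arity only two choices occur in each coordinate, so such a relation is determined by the set $S=\{i\mid \Delta_{\mathcal{E}_i}=\Delta\}$ of coordinates on which equality is forced; writing $\theta_S:=\{(x,y)\in(\{0,1\}^n)^2\mid x_i=y_i\text{ for all }i\in S\}$, each $\theta_S$ is automatically an equivalence relation, so in fact $\Con(E_\leq)=R_{\leq,2}$ is the set of all these $\theta_S$. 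The defining condition $i_1\leq i_2\Rightarrow\mathcal{E}_{i_1}\supset\mathcal{E}_{i_2}$ of $R_{\leq,2}$ translates precisely into ``$S$ is a down-set of $(\{1,\dots,n\},\leq)$'': if $i_2\in S$ and $i_1\leq i_2$ then $i_1\in S$. Thus $\Con(E_\leq)=\{\theta_S\mid S\text{ a down-set of }(\{1,\dots,n\},\leq)\}$.

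Next I would compute the order. A direct check gives $\theta_S\subset\theta_{S'}\iff S\supset S'$ (forcing equality on a larger coordinate set yields a smaller congruence), so $S\mapsto\theta_S$ is an anti-isomorphism of the inclusion-ordered lattice of down-sets of $(\{1,\dots,n\},\leq)$ onto $\Con(E_\leq)$; intersection of congruences corresponds to union of sets and join to intersection. Complementation $S\mapsto\{1,\dots,n\}\setminus S$ carries down-sets of $(\{1,\dots,n\},\leq)$ ordered by $\supset$ isomorphically onto down-sets of $^r(\{1,\dots,n\},\leq)$ ordered by $\subset$, since the complement of a down-set of a poset is a down-set of the reverse poset and complementation reverses inclusion. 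Composing, $\Con(E_\leq)$ is isomorphic to the inclusion-ordered lattice of down-sets of $^r(\{1,\dots,n\},\leq)$. To finish, I would pass from down-sets to antichains: the map $Y\mapsto\mathord{\downarrow}Y$ sending an antichain of a finite poset $X$ to the down-set it generates is the folklore order isomorphism between $\J(X)$ and the inclusion-ordered lattice of down-sets of $X$, and applying it with $X={}^r(\{1,\dots,n\},\leq)$ gives $\Con(E_\leq)\cong\J(^r(\{1,\dots,n\},\leq))$. (Alternatively, since $E_\leq$ is arithmetical by Proposition \ref{arith-direct} its congruence lattice is distributive, so one may instead identify the join-irreducible congruences with the principal down-sets of $^r(\{1,\dots,n\},\leq)$, i.e.\ show $\Ir(\Con(E_\leq))\cong{}^r(\{1,\dots,n\},\leq)$, and conclude via the folklore correspondence $\J(\Ir(L))\cong L$ stated above.)

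The step I expect to require the most care is keeping the two order reversals straight: the congruence order is opposite to inclusion on the defining coordinate sets $S$, while the statement involves the reverse poset $^r(\{1,\dots,n\},\leq)$, so one must verify that these reversals combine correctly rather than cancel, i.e.\ that the answer is $\J(^rP)$ and not $\J(P)$. A small non-self-dual example (such as the three-element ``V'') confirms the orientation. The only other point needing justification is that there are no congruences beyond the product relations $\theta_S$, and this is exactly the content of $\Inv_2(\Pol(R_\leq))=R_{\leq,2}$ already established in Section \ref{s-matrix-product-primal}.
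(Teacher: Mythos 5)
Your proposal is correct and follows essentially the same route as the paper's (much terser) proof: identify $\Con(E_\leq)$ with $R_{\leq,2}$ via Theorem \ref{pol-inv-finite}, read the relations off as down-sets/up-sets of $(\{1,\dots,n\},\leq)$ with the inclusion order reversed, and conclude by the folklore antichain--down-set correspondence giving $\J(^r(\{1,\dots,n\},\leq))$. Your version merely makes explicit the steps the paper compresses, including the careful bookkeeping of the two order reversals.
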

\begin{proof}
By the definition, $\Con(E_\leq)=R_{\leq,2}$ holds and is isomorphic to the set of all upward closed subsets of $(\{1,\dots,n\},\leq)$.
It is isomorphic to $\J(^r(\{1,\dots,n\},\leq))$.
\end{proof}

The characterization of congruence primal arithmetical algebras is stated as follows.
\begin{thm}
Let $A$ be a finite algebra. Then the following conditions are equivalent.
\begin{enumerate}
\item
An irredundant non-refinable cover of $A$ consists of primal algebras.
\item
$A$ is congruence primal and arithmetical.
\end{enumerate}
\end{thm}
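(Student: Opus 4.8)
The plan is to prove the two implications separately, using the classification of Section \ref{s-matrix-product-primal} together with Theorem \ref{characterization} as a bridge between the combinatorial data (covers, matrix products) and the categorical data. Two standing facts will be used throughout: first, that an algebra possessing a cover consisting of arithmetical algebras is itself arithmetical (the fact already invoked in Proposition \ref{arith-direct}); and second, that both congruence primality and arithmeticity are preserved under categorical equivalence, so that Proposition \ref{arith-direct} can be transported along any categorical equivalence with an $E_\leq$.

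For (1) $\Rightarrow$ (2), suppose $\{U_1,\dots,U_k\}$ is an irredundant non-refinable cover of $A$ by primal algebras. Because the cover is non-refinable, each $U_i$ is an irreducible neighbourhood, and I would first observe that an irreducible primal algebra must be two-element: a primal algebra on more than two elements has a two-element neighbourhood that already covers it, since all its invariant relations are of $\Delta_{\mathcal{E}}$ type and are faithfully restricted to any two-element subset, contradicting irreducibility. Hence $\Ess(A)=U_1\boxtimes\cdots\boxtimes U_k$ is a matrix product of two-element primal algebras; being essential, and having all component-wise operations in its clone precisely because each $U_i$ is primal, it is isomorphic to a member $E_\leq$ of $\bigcup_n E_n$ (via Theorem \ref{matrix-product-primal}). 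Then Theorem \ref{characterization} makes $A$ categorically equivalent to $E_\leq$, which is congruence primal and arithmetical by Proposition \ref{arith-direct}, so $A$ inherits both properties.

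For (2) $\Rightarrow$ (1), assume $A$ is congruence primal and arithmetical. Arithmeticity gives congruence distributivity, so $L:=\Con(A)$ is a finite distributive lattice. Using the correspondence between finite posets and finite distributive lattices recorded above together with Proposition \ref{congruence-latice-of-essential}, I would choose a partial order $\leq$ on $\{1,\dots,n\}$ (arising as the reverse of the poset of irreducible elements of $L$) for which $\Con(E_\leq)\cong L=\Con(A)$. Now $A$ and $E_\leq$ are both congruence primal and arithmetical with isomorphic congruence lattices, and the central claim is that this forces them to be categorically equivalent. Granting the claim, Theorem \ref{characterization} yields $\Ess(A)\cong\Ess(E_\leq)=E_\leq$, the last equality because $E_\leq$ is essential; since the irredundant non-refinable cover of $E_\leq$ consists of two-element primal algebras, the uniqueness in Theorem \ref{coveruniqueness} transfers this to $A$, whose cover therefore consists of primal algebras.

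The main obstacle is exactly the central claim: two congruence primal arithmetical algebras with isomorphic congruence lattices are categorically equivalent. My strategy is to reconstruct the entire relational clone from the abstract lattice $L$. Arithmeticity supplies a majority term, so the Baker--Pixley theorem gives $\Inv(A)=\langle\Inv_2(A)\rangle$; congruence primality gives $\Inv(A)=\langle\Con(A)\rangle$; and congruence permutability makes relational composition of congruences coincide with their join, so that the binary compatible relations collapse to the congruences, i.e.\ $\Inv_2(A)=\Con(A)$ (as is confirmed for $E_\leq$ by the identity $\Con(E_\leq)=R_{\leq,2}=\Inv_2(\Pol(R_\leq))$). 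Since the relational-clone generation from congruences is then governed entirely by the lattice operations (meet as intersection, join as composition), an isomorphism $\Con(A)\cong\Con(B)$ matches the generating relations compatibly with all relational-clone operations. The delicate point, where the real work lies, is to upgrade this correspondence of relational clones living on different underlying sets into a genuine categorical equivalence, accounting for the matrix-power and invertible-idempotent bookkeeping; for the concrete target $E_\leq$ this can be carried out by identifying $\Ess(A)$ with $E_\leq$ through the bijection of Theorem \ref{matrix-product-primal}, matching the reconstructed $\Inv(A)$ against $R_\leq$.
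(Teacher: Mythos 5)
Your overall architecture coincides with the paper's: both implications are routed through categorical equivalence with an essential algebra $E_\leq$, using Proposition \ref{arith-direct}, Proposition \ref{congruence-latice-of-essential}, Theorem \ref{characterization} and Theorem \ref{coveruniqueness}. Your treatment of (1) $\Rightarrow$ (2) is sound, and in fact supplies details the paper leaves implicit: that an irreducible primal algebra must be two-element, and that primality of the factors forces all component-wise operations into the matrix product, so that $\Ess(A)$ lands in $\bigcup_n E_n$ and Theorem \ref{matrix-product-primal} applies. (You should still note that the axes of the matrix product form an irredundant non-refinable cover of $\Ess(A)$, the remaining condition for membership in $E_n$, but this is standard.)

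The genuine gap is in (2) $\Rightarrow$ (1), at what you yourself call the central claim: two congruence primal arithmetical algebras with isomorphic congruence lattices are categorically equivalent. The paper does not prove this claim either --- it cites Corollary 4.5 of Bergman and Berman \cite{BB}, which is exactly this statement. You instead attempt a proof and leave the decisive step open. Your ingredients (a majority term from arithmeticity, the Baker--Pixley reduction of $\Inv(A)$ to its binary part, and $\Inv_2(A)=\Con(A)$ for a congruence primal arithmetical algebra) are correct, but the passage from ``$\Inv(A)$ and $\Inv(E_\leq)$ are generated by isomorphic congruence lattices'' to ``$A$ and $E_\leq$ are categorically equivalent'' is the entire content of the Bergman--Berman result: an abstract lattice isomorphism between $\Con(A)$ and $\Con(E_\leq)$ does not by itself provide any comparison of relational clones living on different underlying sets, and that is where all the work lies --- your own wording (``the delicate point, where the real work lies'') concedes that this step is not carried out. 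Moreover, the fallback you propose, identifying $\Ess(A)$ with $E_\leq$ through the bijection of Theorem \ref{matrix-product-primal}, is circular: that theorem classifies clones already known to belong to $E_n$, i.e.\ clones on $\{0,1\}^n$ containing all component-wise operations whose axes form an irredundant non-refinable cover by two-element primal algebras; knowing that $\Ess(A)$ has this form is essentially condition (1), the very thing being proved. As written the argument therefore does not close; you need either to invoke Corollary 4.5 of \cite{BB}, as the paper does, or to supply an actual proof of that corollary.
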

\begin{proof}
(1) $\Rightarrow$ (2) follows from Proposition \ref{arith-direct} and the fact that congruence primality and arithmecity are categorically invariant properties.

(2) $\Rightarrow$ (1): Let $(\{1,\dots,n\},\leq)$ be a poset isomorphic to $^r(\Ir(\Con(A)))$. Then the essential algebra $E_\leq$ is congruence primal, arithmetical and satisfies $\Con(E_\leq)\iso\Con(A)$ by Proposition \ref{congruence-latice-of-essential}.
By Corollary 4.5 of \cite{BB}, $A$ and $E_\leq$ are categorically equivalent to each other. An irredundant non-refinable cover of $E_\leq$ consists of primal algebras,
so is an irredundant non-refinable cover of $A$.
\end{proof}

\begin{df}
For a congruence primal arithmetical algebra $A$, 
we say the (isomorphism class of) poset $^r(\Ir(\Con(A)))$ the type of $A$.
\end{df}

\subsection{Description of the algebras}

In this subsection, we exhibit a inductive description of congruence primal arithmetical algebras.

In the following paragraph,
we fix a non-negative integer $n$ and $\leq\in O_n$.
We write $C:=\Pol(R_\leq)$ and $E:=E_\leq=(\{0,1\}^n,C)$.
$P_i$ denotes the neighbourhood $\{0\}^{i-1}\times\{0,1\}\times \{0\}^{n-i}$ of $E_\leq =\{0,1\}^n$.
For notational convenience, we consider $\leq$ a partial order on $\{1,\dots,n,\infty\}$. $i_1\leq i_2$ interprets as the original partial order $\leq$ if $i_1,i_2\in\{1,\dots,n\}$ and we consider $\infty$ the top element of $\{1,\dots,n,\infty\}$.

Let $A$ be an algebra categorically equivalent to $E$.
Then there exist a positive integer $l$ and a neighbourhood $A'$ of 
$E^{[l]}\iso P_1^{[l]}\boxtimes \dots\boxtimes P_n^{[l]}$ that is isomorphic to $A$.
Thus, we can assume $A$ to be a neighbourhood of $E^{[l]}$.
Because $E^{[l_1]}$ is isomorphic to a neighbourhood of $E^{[l_2]}$ when $l_1\leq l_2$,
the case finitely many finite algebras $A_1,\dots,A_k$ categorically equivalent to $E$ is given, we may assume all of $A_i$ are neighbourhoods of $E^{[l]}$
for a common positive integer $l$. Thus, we fix a positive integer $l$ in the following context.

In this section, we use the following various notation:
\begin{itemize}
\item
For $I\subset \{1,\dots,n\}$, we write $\pi_I$ the projection $\prod_{i=1}^nP_i^{l_i}\ra \prod_{i\in I}P_i^{l_i}$.
\item
For $i\in\{1,\dots,n,\infty\}$,
$\pi_{<i}$ denotes the projection $\prod_{i'=1}^n P_{i'}^{l}\mapsto \prod_{i'<i} P_{i'}^{l}$.
Similarly, $\pi_{\leq i}$ is the projection
$\prod_{i'=1}^n P_{i'}^{l}\mapsto \prod_{i'\leq i} P_{i'}^{l}$
for $i\in\{1,\dots,n\}$.
\item
$A_{<i}:=\pi_{<i}(A)$ and $A_{\leq i}:=\pi_{\leq i}(A)$ for $A\subset \prod_{i'=1}^n P_{i'}^{l}$.
\item
For $i\in \{1,\dots,n\}$, $A\subset \prod_{i'=1}^n P_{i'}^{l}$ and $\bar{a}\in A_{<i}$, we write $A_{i,\bar{a}}$ the set $\{a_i\in P_{i}^l\mid (\bar{a},a_i)\in A_{\leq i}\}$,
where $(\bar{a},a_i)$ is the element of $\prod_{i'\leq i} P_{i'}^{l}\iso \prod_{i'< i} P_{i'}^{l}\times P_i^l$.
\end{itemize}

In the previous section, we consider the correspondence between clones on a finite set and partial orders on a finite set. In that reason, we fixed an underlying set $\{1,\dots,n\}$ of posets.
%The underlying set is fixed but p.o. is not. Thus I think poset"s" is correct.
On the other hand, we consider finite posets determined by congruence lattices of finite algebras, and (downward closed) subposets of such posets in the following paragraph.
In this reason, we sometimes use modified definitions in this section: For all definitions described by the notion of posets having an underlying sets $\{1,\dots,n\}$,
we also use the same notion except replacing an abstract finite poset $P$ a concrete finite poset $(\{1,\dots,n\},\leq)$. 
For example, we write $E_P$ the essential algebra corresponding to $P$.

The next proposition gives a inductive description of neighbourhoods of $E^{[l]}$.
\begin{thm}\label{inductive-description}
Let $n$ be a non-negative integer, $\leq$ be a partial order on $\{1,\dots,n\}$ and $l$ be a positive integer.
A non-empty set $A\subset \prod_{i=1}^n P_i^{[l]}$ is a neighbourhood of $P_1^{[l]}\boxtimes\dots\boxtimes P_n^{[l]}\iso E^{[l]}$ if and only if
the following condition holds: For $\bar{x}=(x_1,\dots,x_n)\in \prod_{i=1}^n P_i^{[l]}$
\begin{equation}\label{neighbourhood-primal}
\bar{x}\in A \Longleftrightarrow
\forall i\in\{1,\dots,n\}; x_i\in A_{i,\pi_{<i}(\bar{x})}.
\end{equation}
\end{thm}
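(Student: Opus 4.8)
The plan is to prove both implications of the biconditional, with the hard direction being the necessity of the characterization (\ref{neighbourhood-primal}), i.e.\ that if $A$ is a neighbourhood then it is reconstructible from its ``one-dimensional fibres'' $A_{i,\pi_{<i}(\bar x)}$. I would organize everything around the description of $C=\Pol(R_\leq)$ from the Remark following Lemma \ref{order-construction}: the term operations of $E$ act coordinatewise in a way where the $i$-th output depends only on the inputs indexed by $\{i'\mid i'\leq i\}$. This ``triangular'' structure lifts to $E^{[l]}$, and it is the engine for everything that follows.

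First I would dispatch the easy implication ($\Rightarrow$). A neighbourhood $A$ is the image $e(E^{[l]})$ of an idempotent term operation $e$, hence $A$ is closed under every term operation of $E^{[l]}$ restricted to $A$; in particular $A=e(E^{[l]})$ is an invariant set that respects the coordinate projections $\pi_{<i}$ and $\pi_{\leq i}$ (these are realized by term operations because projections onto downward-closed sets of coordinates are admissible in $C$). The forward direction of (\ref{neighbourhood-primal}) is then immediate from the definitions of $A_{i,\bar a}$ and $A_{\leq i}$: if $\bar x\in A$ then $\pi_{\leq i}(\bar x)\in A_{\leq i}$, which says exactly $x_i\in A_{i,\pi_{<i}(\bar x)}$ for each $i$.

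The substance is the backward implication and, in the ($\Leftarrow$) direction of the theorem, the converse construction: given an $A$ satisfying (\ref{neighbourhood-primal}) I must produce an idempotent term operation $e\in {\bf E}(E^{[l]})$ with $e(E^{[l]})=A$. Here I would argue by induction on $n$ (equivalently, along the poset $\leq$), using the triangular structure: the truncation $A_{<i}$ is a neighbourhood of the smaller matrix product $\prod_{i'<i}P_{i'}^{[l]}$ by induction, and condition (\ref{neighbourhood-primal}) exhibits $A$ as a ``fibred'' extension whose fibre over each $\bar a\in A_{<i}$ is the prescribed set $A_{i,\bar a}$. Since each $P_i$ is primal, on the single block $P_i^{[l]}$ every map into a neighbourhood is a term operation, so I can assemble a coordinatewise idempotent operation whose $i$-th component retracts $P_i^{l}$ onto the fibre $A_{i,\bar a}$ depending on the already-processed coordinates $\bar a=\pi_{<i}(\bar x)$; the admissibility of such a component in $C$ is precisely guaranteed by its dependence only on coordinates $\leq i$. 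Checking that this $e$ is idempotent, lands exactly in $A$, and is surjective onto $A$ is where (\ref{neighbourhood-primal}) gets used in both directions, and I would verify idempotency componentwise using that each fibre retraction fixes its own image.

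The main obstacle I anticipate is the well-definedness and term-operation membership of the assembled $e$: its $i$-th component must be a genuine operation on the whole $E^{[l]}$ (defined for every input, not just inputs in $A$), it must depend only on coordinates indexed by $\{i'\le i\}$ so as to lie in $C$, and the values $A_{i,\bar a}$ must be coherently chosen over all $\bar a\in A_{<i}$ so that the resulting map is total and idempotent. I expect this to require extending the fibre-retractions off $A_{<i}$ in an arbitrary-but-fixed way (using primality of the blocks to realize any such choice as a term operation) and then invoking Proposition \ref{covercharacterization} or the characterization of neighbourhoods via idempotent term operations to confirm $e(E^{[l]})=A$. The condition (\ref{neighbourhood-primal}) should make the equality $e(E^{[l]})=A$ fall out, since it says membership in $A$ is detected coordinatewise exactly along the fibres that $e$ produces.
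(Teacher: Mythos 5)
Your proposal has a genuine gap in the ($\Rightarrow$) direction, and it is visible as an internal inconsistency: your opening paragraph correctly identifies the necessity claim (``if $A$ is a neighbourhood then it is reconstructible from its fibres'') as the hard part, but your second paragraph then declares ($\Rightarrow$) ``easy'' and dispatches it with an observation that proves something else. What you establish there is only the inner implication $\bar{x}\in A\Rightarrow\forall i;\,x_i\in A_{i,\pi_{<i}(\bar{x})}$, which (as the paper itself notes) holds for \emph{every} subset $A\subset E^{[l]}$, neighbourhood or not; it carries none of the content of the theorem. The substantive half of Condition (\ref{neighbourhood-primal}) is the reverse inner implication, and it is not a formality: for the discrete order on $\{1,2\}$ with $l=1$, the set $A=\{(0,0),(1,1)\}$ has both fibres over the empty tuple equal to $\pi_1(A)=\pi_2(A)=\{0,1\}$, so $(0,1)$ satisfies the fibre condition while $(0,1)\notin A$. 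Hence this implication must be extracted from the hypothesis that $A$ is a neighbourhood, and your proposal never does so -- your remaining paragraphs concern only the ($\Leftarrow$) construction. The paper's argument is short but indispensable: write $A=e(E^{[l]})$ with $e=(e_1,\dots,e_n)\in{\bf E}(E^{[l]})$ in triangular form; since each truncation $\bar{z}\mapsto(e_{i'}(\pi_{\leq i'}(\bar{z})))_{i'\leq i}$ is an idempotent map whose image equals its fixed-point set, one gets, for $\pi_{<i}(\bar{x})\in A_{<i}$, the fixed-point description $A_{i,\pi_{<i}(\bar{x})}=\{y_i\in P_i^l\mid e_i(\pi_{<i}(\bar{x}),y_i)=y_i\}$. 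The hypothesis $\forall i;\,x_i\in A_{i,\pi_{<i}(\bar{x})}$ (which in particular forces each $\pi_{<i}(\bar{x})$ into $A_{<i}$, since the fibres are nonempty) then gives $e(\bar{x})=\bar{x}$, hence $\bar{x}\in A$. Some such argument must appear; without it only one implication of the theorem is proved.

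Your ($\Leftarrow$) outline, by contrast, is essentially the paper's proof: choose for each $i$ and each $\bar{a}\in A_{<i}$ a retraction $f_{i,\bar{a}}:P_i^l\to A_{i,\bar{a}}$ fixing $A_{i,\bar{a}}$ pointwise, assemble them triangularly as $e_i(x_{i'})_{i'\leq i}:=f_{i,(e_{i'}(\pi_{\leq i'}(\bar{x})))_{i'<i}}(x_i)$, note that any operation of this triangular shape lies in $\Clo_1(E^{[l]})$ by the description of $\Pol(R_\leq)$ (primality of the blocks makes the choice of each $f_{i,\bar{a}}$ unconstrained), and use Condition (\ref{neighbourhood-primal}) to conclude $e(E^{[l]})=A$, nonemptiness of $A$ being needed at the minimal elements of the poset. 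One caution about your framing as an induction on $n$ with ``$A_{<i}$ a neighbourhood of the smaller matrix product by induction'': to run that induction you must verify that Condition (\ref{neighbourhood-primal}) is inherited by the truncations $A_{<i}$, which itself requires extending partial tuples through nonempty fibres along the poset; the paper sidesteps this by defining all components $e_i$ simultaneously by recursion on $\leq$ rather than inducting on sub-neighbourhoods.
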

Although this theorem looks very complicated, yet describes a inductive construction (induction on the poset $(\{1,\dots,n\},\leq)$) and each algebras are described very concretely (See the remark below).

\begin{proof}
Note that
$\bar{x}\in A \Rightarrow
\forall i\in\{1,\dots,n\}; x_i\in A_{i,\pi_{<i}(\bar{x})}$
is generally true for all $A\subset E^{[l]}$.

Assume $A\in {\cal N}(E^{[l]})$, $e(E^{l]})=A$, $e=(e_1,\dots,e_n)\in {\bf E}(E^{[l]})$
and let $\bar{x}=(x_1,\dots,x_n)\in E^{[l]}$.
Note that $A_{i,\pi_{<i}(\bar{x})}=\{y_i\in P_i^l\mid e_i((x_{i'})_{i'<i},y_i)=y_i\}$.
Thus $\forall i\in\{1,\dots,n\}; x_i\in A_{i,\pi_{<i}(\bar{x})}$
implies $\bar{x}=e(\bar{x})\in A$.

Conversely, assume (\ref{neighbourhood-primal}) holds.
Then we can choose maps $f_{i,(a_{i'})_{i'<i}}:P_i^l\mapsto A_{i,(a_{i'})_{i'<i}}$ such that
$f_{i,(a_{i'})_{i'<i}}(a_i)=a_i$ for all $a_i\in A_{i,(a_{i'})_{i'<i}}$.
Here, $i$ runs $\{1,\dots,n\}$ and $(a_{i'})_{i'<i}$ runs $\prod_{i'<i}P_{i'}^l$ such that $A_{(a_{i'})_{i'<i}}\neq \emptyset$.

We define $e_i:\prod_{i'\leq i}P_{i'}^l\ra P_i^l$ as follows:
\[
e_i(x_{i'})_{i'\leq i}:=f_{i,e_{i'}(x_{i''})_{i''\leq i'}}(x_i).
\]
(This definition makes sense by induction on $i$.
Note that the case $i\in \{1,\dots,n\}$ is a minimal element with respect to $\leq$, the assumption $A\neq \emptyset$ is needed.)
Then $e=(e_1,\dots,e_n)\in {\bf E}(E^{[l]})$ and
$e(E^{[l]})=A$.
\end{proof}

\begin{rem}
A non-empty subset $A\subset \prod_{i=1}^nP_i^{l_i}$ satisfies Condition (\ref{neighbourhood-primal}) if and only if $A$ is constructed as in the following way:
We construct subsets $A_Q\subset \prod_{i\in Q}P_i^{l_i}$ for downward closed subsets $Q\subset \{1,\dots,n\}$ (with respect to $\leq$) inductively.

For the empty set: We define $A_\emptyset$ the singleton $\prod_{i\in\emptyset}P_i^{l_i}$.

The case $Q=\{j\mid j\leq i\}$: Assume $A_R\subset \prod_{j\in R}P_j^{l_j}$ are already defined for downward closed sets $R\subset Q\sm\{i\}$.
We choose non-empty subsets $A_{i,\bar{a}}\subset P_i^{l_i}$ for $\bar{a}\in A_{Q\sm\{i\}}$, and put
\[
A_Q:=\bigcup_{\bar{a}\in A_{Q\sm\{i\}}}\{\bar{a}\}\times A_{i,\bar{a}},
\]
where the first and the second entries denotes $Q\sm\{i\}$ and $i$-th components respectively.

The case $Q$ has more than two maximal elements:
we put
\[
A_Q:=\left\{\left. x\in \prod_{i\in Q}P_i^{l_i}\right|
 \pi_R(x)\in A_R\text{ for all down sets }R\subsetneq Q\right\}.
\]
Finally, we put $A:=A_{\{1,\dots,n\}}$.
\end{rem}

\begin{eg}\label{neighbourhood-1-tree}
\mbox{}
\begin{enumerate}
\item
If $\leq$ is the equality relation on $\{1,\dots,n\}$, that is $\{(i,i)\mid i\in\{1,\dots,n\}\}$,
a subset $A\subset \prod_{i=1}^n P_i^l$ is a neighbourhood of $E^l$ if and only if
$A$ can be described as $\prod_{i=1}^n A_i$ for some $A_i\subset P_i^l$.
\item \label{2tree}
Let $\leq$ be a partial order on $\{1,2,3\}$ such that $1< 2,1< 3$ and $2,3$ are incomparable.
Then a set $A\subset P_1^l\times P_2^l\times P_3^l$ satisfies Condition (\ref{neighbourhood-primal})
of Theorem \ref{inductive-description}
means that $A$ can be described as in the following form:
There exist non-empty sets $B\subset P_1^l$ and $C_b\subset P_2^l,D_b\subset P_3^l$ for $b\in B$ such that
\[
A=\bigcup_{b\in B} \{b\}\times C_b\times D_b.
\]
\item
Let $\leq$ be a partial order on $\{1,\dots,5\}$ that $1< 2< 4$, $1< 3< 4$, $3< 5$
and others are incomparable.
We choose non-empty sets $B_1\subset P_1^l$, $B_{2,b}\subset P_2^l$, $B_{3,b}\subset P_3^l$ for $b\in B_1$
and put
\[
A_{<4}:=\bigcup_{b\in B_1}\{b\}\times B_{2,b}\times B_{3,b},\ 
A_{<5}:=\bigcup_{b\in B_1}\{b\}\times B_{3,b}.
\]
Further, we choose non-empty sets $B_{4,\bar{a}}\subset P_4^l$ for $\bar{a}\in A_{<4}$
and $B_{5,\bar{a}}\subset P_5^l$ for $\bar{a}\in A_{<5}$.
Then the set
\[
A:=\left\{\left. (b_i)_{i=1}^5\in \prod_{i=1}^5 P_i^l\right|
(b_1,b_2,b_3)\in A_{<4},b_4\in B_{4,(b_1,b_2,b_3)},b_5\in B_{5,(b_1,b_3)}\right\}
\]
satisfies Condition (\ref{neighbourhood-primal}).
Conversely, subsets of $\prod_{i=1}^5 P_i^l$ satisfying Condition (\ref{neighbourhood-primal})
is constructed as above.
\end{enumerate} 
\end{eg}

Next, we consider when two neighbourhoods are isomorphic.
\begin{thm}\label{inductive-isomorphism}
Let $l$ be a positive integer and $A$ and $B$ are neighbourhoods of $E^{[l]}$.
Then $A$ and $B$ are isomorphic to each other via a term operation of $E^{[l]}$ if and only if 
there exists a family of maps $(\ph_{i,\bar{a}})_{i,\bar{a}}$ as follows:
\begin{itemize}
\item
$(i,\bar{a})$ runs tuples such that $i\in\{1,\dots,n\}$
and $\bar{a}\in A_{<i}$.
\item
For $i\in \{1,\dots,n\}$ and $\bar{a}=(a_{i'})_{i'<i}$,
$\ph_{i,\bar{a}}$ is a bijection $A_{i,\bar{a}}\ra B_{i,(\ph_{i',(a_{i''})_{i''<i'}}(a_{i'}))_{i'<i}}$.
\end{itemize}
\end{thm}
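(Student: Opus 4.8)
The plan is to first pin down the exact shape of the unary term operations of $E^{[l]}$, and then transport the inductive description of neighbourhoods (Theorem \ref{inductive-description}) across such an operation. Writing a unary term operation of $E^{[l]}=E\boxtimes\cdots\boxtimes E$ as an $l$-tuple of $l$-ary term operations of $E=E_\leq$, the description of $\Pol(R_\leq)$ shows that each of these is a triangular tuple of two-element operations, and since every $P_i$ is primal these components are unconstrained. Reorganising the $l$ outputs into the $P_i^{l}$-coordinates, one sees that a unary term operation of $E^{[l]}$ is exactly a tuple $t=(t_1,\dots,t_n)$ in which $t_i\colon \prod_{i'\leq i}P_{i'}^{l}\to P_i^{l}$ is an \emph{arbitrary} map of the coordinates $i'\leq i$; this is the same triangular pattern as the idempotents in Theorem \ref{inductive-description}. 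I read ``$A$ and $B$ are isomorphic via a term operation of $E^{[l]}$'' in the sense of Theorem \ref{coveruniqueness}(1): there are unary term operations $t,t'$ of $E^{[l]}$ whose restrictions $t|_A\colon A\to B$ and $t'|_B\colon B\to A$ are mutually inverse bijections. For $\bar a=(a_{i'})_{i'<i}\in A_{<i}$ I abbreviate by $\psi(\bar a):=(\ph_{i',\pi_{<i'}(\bar a)}(a_{i'}))_{i'<i}$ the recursively built tuple that indexes the target fibre in the statement.

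For the direction from the family to an isomorphism, given $(\ph_{i,\bar a})$ I define $t_i$ on $A_{\leq i}$ by $t_i(\bar a,x_i):=\ph_{i,\bar a}(x_i)$ and extend $t_i$ arbitrarily elsewhere; by the structural fact $t=(t_1,\dots,t_n)$ is a unary term operation of $E^{[l]}$. By induction along a linear extension of $\leq$ one checks $\pi_{<i}(t(\bar x))=\psi(\pi_{<i}(\bar x))$ for $\bar x\in A$, so that $t_i(\bar x)=\ph_{i,\pi_{<i}(\bar x)}(x_i)$ lands in $B_{i,\pi_{<i}(t(\bar x))}$; hence $t(A)\subseteq B$ by Theorem \ref{inductive-description}. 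Injectivity of $t|_A$ follows by the same induction using injectivity of each $\ph_{i,\bar a}$, and surjectivity is a dual induction that builds a preimage coordinate by coordinate, the bijectivity of $\ph_{i,\bar a}$ onto $B_{i,\psi(\bar a)}$ guaranteeing that each chosen coordinate both exists and lies in the correct fibre, so that the assembled tuple lies in $A$ by Theorem \ref{inductive-description}. Repeating the construction with the inverse family $(\ph_{i,\bar a}^{-1})$ produces a term operation $t'$ with $t'|_B=(t|_A)^{-1}$, so $A$ and $B$ are isomorphic via a term operation.

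For the converse, let $t,t'$ be mutually inverse term operations as above. Triangularity makes $\pi_{<i}\circ t$ and $\pi_{<i}\circ t'$ factor through $\pi_{<i}$, yielding well-defined maps $T_{<i}\colon A_{<i}\to B_{<i}$ and $T'_{<i}\colon B_{<i}\to A_{<i}$; projecting the identities $t'(t(\bar x))=\bar x$ and $t(t'(\bar y))=\bar y$ onto the coordinates below $i$ shows that $T_{<i}$ and $T'_{<i}$ are mutually inverse, hence bijections. Defining $\ph_{i,\bar a}(x_i):=t_i(\bar a,x_i)$, the tuple $\psi(\bar a)$ built from these $\ph_{i',\cdot}$ then coincides with $T_{<i}(\bar a)$, so $\ph_{i,\bar a}$ is a well-defined map $A_{i,\bar a}\to B_{i,\psi(\bar a)}$; it is the restriction to the fibre over $\bar a$ of the bijection $\pi_{\leq i}\circ t|_{A_{\leq i}}$, and is therefore itself a bijection. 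This produces the required family.

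The main subtlety, and the point at which a naive argument breaks, is that $t|_A$ being merely an isomorphism of non-indexed algebras is \emph{not} sufficient: the component maps $t_i|_A$ need not be injective on fibres unless the inverse is also a term operation. For instance, with the chain $1<2$ and $l=1$, the neighbourhoods $A=\{(0,0),(1,1)\}$ and $B=\{(0,0),(0,1)\}$ are both two-element primal, and the term operation with $t_1\equiv 0$, $t_2(0,0)=0$, $t_2(1,1)=1$ restricts to a bijection $A\to B$ that is abstractly an isomorphism; yet $|A_{<1}|\neq|B_{<1}|$, no fibrewise bijection family can exist, and indeed $(t|_A)^{-1}$ is not a term operation. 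Thus the proof must genuinely use both $t$ and $t'$, and the bookkeeping over a general poset (rather than a chain) has to be carried out carefully so that $\pi_{<i}$ collects exactly the coordinates on which the triangular components $t_{i'}$, $i'<i$, depend, which is precisely what makes all the induced fibrewise maps well defined.
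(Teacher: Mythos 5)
Your proof is correct, and its two constructions coincide with the paper's own: in the ``if'' direction you assemble the fibre maps $\ph_{i,\bar a}$ into a triangular unary term operation (this is exactly the paper's inductive definition of $\ph_i$), and in the ``only if'' direction you recover the family as $\ph_{i,\bar a}(x_i):=t_i(\bar a,x_i)$, which is literally the paper's definition. What you add, and what the paper glosses over, is the justification that these fibre maps are bijections: the paper's ``only if'' argument stops at ``then the family satisfies the condition'', and this step genuinely requires that the inverse of $t\rest_A$ is itself the restriction of a term operation --- your reading of ``isomorphic via a term operation'' as a pair of mutually inverse term restrictions, which is the standard notion of isomorphism of neighbourhoods in this theory. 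Your counterexample pinpointing this is valid: for the chain $1<2$ and $l=1$, with $A=\{(0,0),(1,1)\}$, $B=\{(0,0),(0,1)\}$, and $t=(t_1,t_2)$ given by $t_1\equiv 0$, $t_2(x_1,x_2)=x_2$, the map $t\rest_A$ is a term-operation restriction and is an isomorphism of the induced non-indexed algebras (both are two-element primal algebras), yet $|\pi_1(A)|=2\neq 1=|\pi_1(B)|$, so no family as in the theorem can exist; moreover no term operation carries $B$ bijectively onto $A$, since the first component of any term operation is constant on $B$. Thus under the weaker reading (some term restriction is an abstract isomorphism) the theorem would be false, while under your reading both directions hold, and your use of the inverse term operation supplies the fibrewise bijectivity that the paper leaves implicit. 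In short: same route as the paper, but with the hypothesis correctly disambiguated and the one non-trivial verification actually carried out.
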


\begin{proof}
Suppose $\ph=(\ph_1,\dots,\ph_n)\in \Clo_1(E)$ and $\ph\mc\rest_A$ is an
isomorphism $A\ra B$. Define $\ph_{i,\bar{a}}:a_i\mapsto \ph_i(\bar{a},a_i)$.
Then the family $(\ph_{i,\bar{a}})_{i,\bar{a}}$ satisfies the condition.

Conversely, assume $(\ph_{i,\bar{a}})_{i,\bar{a}}$ satisfies the above condition.
We inductively define $\ph_i:\prod_{i'\leq i}P_{i'}^l\ra P_i^l$ as
\[
(x_{i'})_{i'\leq i}\mapsto \ph_{i,\ph_{i'}(x_{i''})_{i''\leq i'}}(x_i).
\]
Then $\ph=(\ph_1,\dots,\ph_n)\in \Clo_1(E)$ and $\ph\rest_A$ is an isomorphism $A\ra B$.
\end{proof}

\begin{eg}
Let $\leq$ be the partial order of Example \ref{neighbourhood-1-tree} (\ref{2tree}).
Let $B,B'\subset P_1^l$, $C_b,C'_{b'}\subset P_2^l$, $D_b,D'_{b'}\subset P_3^l$ (for $b\in B,b'\in B'$) be non-empty sets and
\[
A:=\bigcup_{b\in B}\{b\}\times C_b\times D_b,\ 
A' :=\bigcup_{b'\in B'}\{b'\}\times C'_{b'}\times D'_{b'}.
\]
Then $A$ and $A'$ are isomorphic via a term operation of $E^{[l]}$
if and only if 
\begin{itemize}
\item
There exists a bijection $\ph:B\ra B'$ that satisfies
$|C_b|=|C'_{\ph(b)}|$ and $|D_b|=|D'_{\ph(b)}|$ for all $b\in B$.
\end{itemize}
$A$ and $A'$ are isomorphic as non-indexed algebras if and only if
the above holds or
\begin{itemize}
\item
There exists a bijection $\ph:B\ra B'$ that satisfies
$|C_b|=|D'_{\ph(b)}|$ and $|D_b|=|C'_{\ph(b)}|$ for all $b\in B$.
\end{itemize}
(The second item is the case that the isomorphism is the composition
of automorphism of $E^{[l]}$ induced by the automorphism $[1\mapsto 1,2\mapsto 3,3\mapsto 2]$ of the poset $(\{1,2,3\},\leq)$
and a term operation.)
\end{eg}

At the end of this article, we consider ``minimal" algebras in a categorical
equivalence class of congruence primal arithmetical algebras.
\begin{df}
In this article, we say a finite algebra $A$ \emph{c-minimal} if there are no $U\in {\cal N}(A)\sm\{A\}$
that are categorically equivalent to $A$.
\end{df}

\begin{thm}\label{characterization-c-minimal}
A neighbourhood $A\in {\cal N}(E^{[l]})$ is categorically equivalent to $E$
if and only if for each $i\in\{1,\dots,n\}$,
there exists $\bar{a}\in A_{<i}$ such that $|A_{i,\bar{a}}|\geq 2$.
\end{thm}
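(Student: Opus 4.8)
The plan is to reduce the statement to a computation of the essential part of $A$. By Theorem~\ref{characterization}, $A$ is categorically equivalent to $E=E_\leq$ if and only if $\Ess(A)\iso \Ess(E)=E$ (recall $E$ is essential), so it suffices to identify $\Ess(A)$. Write $S:=\{i\in\{1,\dots,n\}\mid \exists\,\bar a\in A_{<i},\ |A_{i,\bar a}|\geq 2\}$, so that the fibre condition of the statement is precisely $S=\{1,\dots,n\}$, and let $\leq'$ be the order induced by $\leq$ on $S$ (using the convention of this section permitting arbitrary finite underlying sets in the construction $E_{(\cdot)}$). The structural fact I aim for is
\[
\Ess(A)\iso E_{\leq'}.
\]
Granting this, the theorem follows: an isomorphism $E_{\leq'}\iso E_\leq$ would identify two essential algebras whose irredundant non-refinable covers consist of $|S|$ and of $n$ primal factors respectively, so the uniqueness in Theorem~\ref{coveruniqueness}(1) forces $|S|=n$, i.e.\ $S=\{1,\dots,n\}$; conversely $S=\{1,\dots,n\}$ gives $\leq'=\leq$ and hence $\Ess(A)\iso E_\leq$.

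I would prove the structural fact in two stages. First I reduce to the case $S=\{1,\dots,n\}$ by eliminating redundant coordinates one at a time. If $i_0\notin S$, every fibre $A_{i_0,\bar a}$ with $\bar a\in A_{<i_0}$ is a singleton, so on $A$ the coordinate $x_{i_0}$ is a function of $\pi_{<i_0}(x)$, whence the projection forgetting coordinate $i_0$ restricts to a bijection of $A$ onto $A^-:=\pi_{\{1,\dots,n\}\sm\{i_0\}}(A)$. The verification here is that, since every component $g_j$ of a term operation of $E^{[l]}$ depends only on the coordinates $\leq j$ (the Remark following Lemma~\ref{order-construction}) and $x_{i_0}$ is itself determined by the coordinates strictly below it, this bijection is an isomorphism of non-indexed algebras onto a neighbourhood $A^-$ of $E_{\leq^-}^{[l]}$, where $\leq^-$ is the restriction of $\leq$ to $\{1,\dots,n\}\sm\{i_0\}$; moreover the surviving fibres of $A^-$ are those of $A$, so no non-redundant coordinate becomes redundant. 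Iterating removes every redundant coordinate without changing $\Ess(A)$, reducing us to the full-fibre case over the poset $\leq'$.

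Second, assume $S=\{1,\dots,n\}$ and construct an irredundant non-refinable cover of $A$ by $n$ two-element primal algebras realizing $\leq$. For each $i$ I fix $\bar a^{(i)}\in A_{<i}$ and distinct $u_i,v_i\in A_{i,\bar a^{(i)}}$, and, following the recipe in the proof of Theorem~\ref{inductive-description}, I choose the retraction maps $f_{i,\bar a}$ so that the resulting idempotent $e^{(i)}\in{\bf E}(E^{[l]})$ has image inside $A$, collapsing every coordinate to a fixed base thread except coordinate $i$, where it retains exactly the two values $u_i,v_i$. Then $U_i:=e^{(i)}(A)$ is a two-element neighbourhood of $A$; since $E^{[l]}$ is arithmetical (being categorically equivalent to the arithmetical algebra $E$) and arithmeticity passes to neighbourhoods, each $U_i$ is a two-element arithmetical algebra, hence primal. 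To finish I must show $\{U_1,\dots,U_n\}$ covers $A$ and is irredundant: covering holds because two invariant relations of $A$ agreeing on every $U_i$ already agree on $A$, and irredundancy is proved, exactly as in Lemma~\ref{order-construction}(3), by exhibiting for each $i_0$ a pair of invariant relations of $A$ (restrictions to $A$ of suitable $\prod_i\Delta_{{\cal E}_i}\in\Inv_2(E_\leq)$) that agree on all $U_i$ with $i\neq i_0$ but differ on $U_{i_0}$; non-refinability is then automatic by Lemma~\ref{two-elements-cover}. Consequently $\Ess(A)=U_1\boxtimes\cdots\boxtimes U_n$, and identifying this matrix product with $E_\leq$ reduces to checking that its clone $C$ lies in $E_n$ and that the order $\leq_C$ equals $\leq$, after which Theorem~\ref{matrix-product-primal} yields $\Ess(A)\iso E_\leq$.

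The main obstacle is this second stage. The delicate points are that the idempotents $e^{(i)}$ be chosen so their images are genuinely \emph{two-element} neighbourhoods contained in $A$ (this is exactly where the fibre condition $|A_{i,\bar a^{(i)}}|\geq 2$ is indispensable), and that the recovered order $\leq_C$ of the matrix product coincide with $\leq$. The latter requires tracking how the binary invariant relations $\prod_i r_i\in\Inv_2(E_\leq)$ restrict to $A$ and then to the $U_i$, matching the definition of $\leq_C$ in Theorem~\ref{matrix-product-primal}; since $A\subset E_\leq^{[l]}$ inherits its invariant relations from $E_\leq$, I expect these restrictions to reproduce precisely the relations encoding $\leq$, but making this book-keeping airtight is where the bulk of the work lies.
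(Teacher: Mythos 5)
Your converse direction has the same skeleton as the paper's proof (for each $i$, produce a two-element neighbourhood of $A$ witnessing coordinate $i$, show these form an irredundant non-refinable cover, and finish by comparing essential parts), but two concrete steps are wrong and the decisive step is missing. First, the idempotents you describe need not exist: you want $e^{(i)}$ to collapse ``every coordinate to a fixed base thread except coordinate $i$''. Take $n=2$ with $1<2$, $l=2$, and $A=\{\bar0\}\times\{00,01\}\,\cup\,\{\bar1\}\times\{10,11\}\subset P_1^l\times P_2^l$, where $\bar0=00$ and $\bar1=11$ in $P_1^l=\{0,1\}^2$. This $A$ is a neighbourhood of $E^{[2]}$ satisfying your hypothesis $S=\{1,2\}$, yet no two elements of $A$ differ at coordinate $1$ and agree at coordinate $2$, because the coordinate-$2$ fibres over $\bar0$ and over $\bar1$ are disjoint; so no such $e^{(1)}$ exists. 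A two-element neighbourhood witnessing coordinate $i$ must in general let the coordinates above $i$ follow the value at coordinate $i$, which is exactly how the paper defines $e_{i,i'}$ for $i'>i$ (its value is $b_{i'}$ or $c_{i'}$ according to whether $e_{i,i}(x)=b_i$ or $c_i$). Second, the inference ``two-element arithmetical, hence primal'' is false: the two-element algebra whose clone is generated by the ternary discriminator has a Pixley term, hence is arithmetical, but it is not primal, since $\{0\}$ and $\{1\}$ are subuniverses and the transposition of $0$ and $1$ is an automorphism. Primality of the $U_i$ should instead be read off from the triangular shape of $\Clo(E^{[l]})$: every operation on the two-element set lifts to a term operation of $E^{[l]}$ whose $j$-th component, for $j\geq i$, depends only on the $i$-th coordinates of its arguments, and is constant for $j\not\geq i$.

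Third, and most seriously, the step you yourself defer as ``where the bulk of the work lies'' --- verifying that the clone of $U_1\boxtimes\cdots\boxtimes U_n$ lies in $E_n$ and that the recovered order $\leq_C$ equals $\leq$, so that Theorem \ref{matrix-product-primal} applies --- is never carried out, so the proposal is incomplete precisely at its crux. The paper shows that this bookkeeping is avoidable: since every invariant relation of $A$ is the restriction to $A$ of a relation $\prod_{i}\Delta_{{\cal E}_i}\in\Inv(E^{[l]})$, the computation proving that $\{e_1(A),\dots,e_n(A)\}$ is an irredundant cover of $A$ proves at the same time that it is an irredundant cover of $E^{[l]}$ itself (non-refinable in both cases by Lemma \ref{two-elements-cover}); Theorem \ref{coveruniqueness}(2) then yields $\Ess(A)\iso\Ess(E^{[l]})\iso E$ with no recomputation of orders, and Theorem \ref{characterization} concludes. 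For the forward direction, your Stage 1 (eliminating redundant coordinates to get the stronger fact $\Ess(A)\iso E_{\leq'}$) is likewise only sketched and is not needed: the paper argues directly that an irredundant non-refinable cover of $A$ is also one of $E^{[l]}$, so by Theorem \ref{coveruniqueness}(1) its members are images of the standard two-element neighbourhoods under unary term operations, and triangularity of those operations forces the two elements of the $i$-th member to agree at all coordinates $\not\geq i$ and to differ at coordinate $i$, which is exactly the fibre condition $|A_{i,\bar a}|\geq 2$.
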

\begin{proof}
Assume $A$ is categorically equivalent to $E$.
An irredundant non-refinable cover ${\cal U}$ of $A$ also be an irredundant non-refinable cover of $E^{[l]}$.
Thus, by Theorem \ref{coveruniqueness}, ${\cal U}$ is isomorphic to an irredundant non-refinable cover
$\{\{\bar{0}\}^{i-1}\times\{\bar{0},\bar{1}\}\times \{\bar{0}\}^{n-i}\mid i\in\{1,\dots,n\}\}$ of $E^{[l]}$,
where $\bar{0}=(0,\dots,0)\in \{0,1\}^l,\bar{1}=(1,\dots,1)\in \{0,1\}^l$.
Let $\{a,b\}=U_i\in {\cal N}(A)$ be a neighbourhood isomorphic to $\{\bar{0}\}^{i-1}\times\{\bar{0},\bar{1}\}\times \{\bar{0}\}^{n-i}$ via a term operation of $E^{[l]}$,
and $\bar{a}:=\pi_{<i}(a)=\pi_{<i}(b)$. Then $\pi_i(a)\neq \pi_i(b)$ and $\{\pi_i(a),\pi_i(b)\}\subset A_{i,\bar{a}}$.

Conversely, fix $i\in\{1,\dots,n\}$ and suppose $\bar{a}\in A_{<i}$, $b_i,c_i\in A_{i,\bar{a}}$, $b_i\neq c_i$.
We choose $a=(a_{i'})_{i'=1}^n\in A$ such that $\pi_{<i}(a)=\bar{a}$ and 
$b=(b_{i'})_{i'=1}^n,c=(c_{i'})_{i'=1}^n\in A$ such that  $\pi_{<i}(b)=\pi_{<i}(c)=\bar{a}$ and $b_{i'}=c_{i'}=a_{i'}$ for $i'\not\geq i$.
(Such $b$ and $c$ can be constructed by induction on $i'$.)
Define $e_i=(e_{i,i'})_{i'=1}^n\in \Clo_1(A)$ as
\[
e_{i,i'}(x):=
\begin{cases}
a_{i'}& i'\not\geq i\\
b_i   & i'=i,x_i=b_i\\
c_i   & i'=i,x_i\neq b_i\\
b_{i'}&i'>i,e_{i,i}(x)=b_i\\
c_{i'}&i'>i,e_{i,i}(x)=c_i,
\end{cases}
\] 
where $x_i$ is the $i$-th component of $x$.
\begin{claim} \label{e-eq-cover}
$\{e_1(A),\dots,e_n(A)\}$ is an irredundant non-refinable cover of $A$.
\end{claim}
Suppose
\[
r_1=\left(\prod_{i=1}^n \Delta_{{\cal E}_i}\right)\rest_{A},\ 
r_2=\left(\prod_{i=1}^n \Delta_{{\cal E}'_i}\right)\rest_{A}\in \Inv(A)
\]
and $r_1\neq r_2$.
Then there exists $i\in \{1,\dots,n\}$ such that ${\cal E}_i\neq {\cal E}'_i$, hence $e_i(r_1)\neq e_i(r_2)$.
Thus $\{e_1(A),\dots,e_n(A)\}$ covers $A$.

We consider the case
\[
{\cal E}_{i'}=
\begin{cases}
\{0,1\}^2 & (i'\not\geq i)\\
\Delta & (i'\geq i)
\end{cases}
\]
and
\[
{\cal E}'_{i'}=
\begin{cases}
\{0,1\}^2 & (i'\not> i)\\
\Delta & (i'> i).
\end{cases}
\]
Then $e_{i'}(r_1)=e_{i'}(r_2)$ for $i'\neq i$ and $e_i(r_1)\neq e_i(r_2)$ hold.
Thus the cover $\{e_1(A),\dots,e_n(A)\}$ is irredundant. Non-refinability follows from Lemma \ref{two-elements-cover}
and Claim \ref{e-eq-cover} is proved.

$\{e_1(A),\dots,e_n(A)\}$ also be a cover of $E^{[l]}$.
Therefore, $\Ess(A)\iso \Ess(E^{[l]})\iso E$.
\end{proof}
\resetclaim

\begin{cor}\label{condition-of-c-minimal}
Let $P$ be a finite poset, $A\subset \{0,1\}^P$.
Then a neighbourhood of $A$ is a c-minimal algebra of the type $P$
if and only if the following condition holds for each $i\in P$:
There exists (unique) $\bar{a}\in A_{<i}$
such that $|A_{i,\bar{a}}|=2$, and $|A_{i,\bar{x}}|=1$ for $\bar{x}\in A_{<i}\sm\{\bar{a}\}$.
\end{cor}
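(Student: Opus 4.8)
The plan is to recast \emph{c-minimality} as inclusion-minimality. Since $A$ is itself a neighbourhood of $E_P=E^{[1]}$, I would first record that the neighbourhoods of the algebra $A$ are precisely the neighbourhoods of $E^{[1]}$ contained in $A$: if $U=e(E^{[1]})\subseteq A$ with $e\in{\bf E}(E^{[1]})$, then $e\rest_A\in{\bf E}(A)$ and $e(A)=U$; conversely, given $U=e(A)$ with $e\in{\bf E}(A)$, extending $e$ to a term operation $\tilde e$ of $E^{[1]}$ with $\tilde e(A)\subseteq A$ and composing with an idempotent $e_0\in{\bf E}(E^{[1]})$ satisfying $e_0(E^{[1]})=A$ yields an idempotent $\tilde e\circ e_0\in{\bf E}(E^{[1]})$ whose image is $U$. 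Combining this with Theorem \ref{characterization-c-minimal}, the assertion ``$A$ is a c-minimal algebra of type $P$'' becomes ``$A$ is categorically equivalent to $E_P$ and no proper subset $U\subsetneq A$ that is a neighbourhood of $E^{[1]}$ is categorically equivalent to $E_P$'', i.e.\ $A$ is $\subseteq$-minimal among the type-$P$ neighbourhoods of $E^{[1]}$. Note moreover that the stated condition already forces a $2$-element fiber at every level, so by Theorem \ref{characterization-c-minimal} it guarantees categorical equivalence to $E_P$; hence the whole task is to identify the condition with $\subseteq$-minimality.

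For the implication ``condition $\Rightarrow$ c-minimal'' I would prove that any neighbourhood $U\subseteq A$ of $E^{[1]}$ that is categorically equivalent to $E_P$ must equal $A$, by showing $\pi_Q(U)=\pi_Q(A)$ for every downward closed $Q\subseteq P$ by induction on $|Q|$; the case $Q=P$ gives $U=A$. In the inductive step I would choose a maximal element $i$ of $Q$ and put $Q'=Q\setminus\{i\}$, so that $\{i'\mid i'<i\}\subseteq Q'$. Using the inductive description (Theorem \ref{inductive-description}) one checks that the fiber of $\pi_Q(A)$ over $\bar a\in\pi_{Q'}(A)$ is exactly $A_{i,\pi_{<i}(\bar a)}$, and likewise for $U$. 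By the induction hypothesis $\pi_{Q'}(U)=\pi_{Q'}(A)$, whence $U_{<i}=A_{<i}$, so it suffices to show $U_{i,\bar b}=A_{i,\bar b}$ for all $\bar b\in A_{<i}$. Categorical equivalence of $U$ provides, via Theorem \ref{characterization-c-minimal}, some base with a $2$-element fiber; as $U_{i,\bar b}\subseteq A_{i,\bar b}$ and all level-$i$ fibers of $A$ have size at most $2$, that base must be the unique doubled base of $A$, and the two $2$-element fibers coincide. For every other $\bar b\in A_{<i}=U_{<i}$ the fiber $U_{i,\bar b}$ is nonempty (any element of $U$ projecting to $\bar b$ witnesses this) and sits inside the singleton $A_{i,\bar b}$, hence equals it. Thus all fibers agree and $\pi_Q(U)=\pi_Q(A)$.

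For the converse I would argue contrapositively: if the condition fails at some level $i$, I would seek a proper $U\subsetneq A$ that is still a type-$P$ neighbourhood, contradicting minimality. Because $A\subseteq\{0,1\}^P$ keeps every fiber of size at most $2$, a failure at $i$ means that two distinct bases $\bar b\neq\bar b'$ of $A_{<i}$ carry $2$-element fibers. The natural attempt is to delete one ``slice'' $\{\bar x\in A\mid \pi_{<i}(\bar x)=\bar b',\ x_i=c\}$: this retains a doubled fiber at level $i$ and, by a direct verification of (\ref{neighbourhood-primal}), leaves a neighbourhood of $E^{[1]}$. \textbf{This is the step I expect to be the main obstacle.} When $i$ is not maximal, such a deletion shrinks the projections $\pi_{<j}(A)$ for levels $j>i$ and may remove the very base that supports the $2$-element fiber required at some higher level $j$, so the resulting set can fail to be categorically equivalent to $E_P$. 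The heart of the argument is therefore a \emph{dependency-aware} deletion: processing $P$ downward from its maximal elements, one commits at each level to a single doubled fiber together with the base points supporting the doubled fibers already fixed above, and discards the remaining surplus. The delicate point---and where I anticipate the real work---is to show that these choices can always be made consistently, i.e.\ that the base points pinned from above never demand two distinct values on a fiber that is meant to stay a singleton; once a single admissible surplus element is produced, the cover $\{e_1(A),\dots,e_n(A)\}$ built as in Theorem \ref{characterization-c-minimal} certifies that the smaller set is still categorically equivalent to $E_P$ and completes the contradiction.
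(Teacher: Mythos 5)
Your opening reduction (the neighbourhoods of $A$ are exactly the neighbourhoods of $E^{[1]}$ contained in $A$) is correct, and your proof of the implication ``condition $\Rightarrow$ c-minimal'' is complete: the induction over downward closed $Q\subseteq P$, comparing fibers and using Theorem \ref{characterization-c-minimal} to force the unique doubled fiber of $A$ into any type-$P$ sub-neighbourhood $U$, goes through. Note that the paper gives no proof of Corollary \ref{condition-of-c-minimal} at all (it is presented as immediate from Theorem \ref{characterization-c-minimal}), so on this half you have supplied an argument the paper only gestures at.

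The converse is where the genuine gap lies, and you located it exactly; but the obstacle you flagged is not a missing lemma, it is fatal, because the ``only if'' direction of the corollary is false. Take $P=\{1,2,m_{00},m_{01},m_{10},m_{11}\}$ with $1<2<m_{ab}$ for all $(a,b)\in\{0,1\}^2$ and the four elements $m_{ab}$ pairwise incomparable, and put
\[
A:=\{\bar{x}\in\{0,1\}^P \mid x_{m_{ab}}=0 \text{ whenever } (x_1,x_2)\neq(a,b)\}.
\]
This $8$-element set is the image of the idempotent $e\in{\bf E}(E_P)$ whose components are $e_1(\bar{x})=x_1$, $e_2(\bar{x})=x_2$, and $e_{m_{ab}}(\bar{x})=x_{m_{ab}}$ if $(x_1,x_2)=(a,b)$ and $0$ otherwise; hence $A\in{\cal N}(E_P)$. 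Every level has a doubled fiber, so $A$ has type $P$ by Theorem \ref{characterization-c-minimal}; but the corollary's condition fails at $i=2$, since \emph{both} bases in $A_{<2}=\{0,1\}$ have $2$-element fibers. Nevertheless $A$ is c-minimal: if $U\subseteq A$ is a neighbourhood of $E_P$ of type $P$, then at each level $m_{ab}$ Theorem \ref{characterization-c-minimal} forces some doubled fiber of $U$, and since $U$-fibers sit inside $A$-fibers, it can only lie over the base $(a,b)$; for each $\eps\in\{0,1\}$ there is a unique element of $A$ whose projection to the coordinates $\{1,2,m_{ab}\}$ is $(a,b,\eps)$, so both of these elements lie in $U$. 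Running over the four pairs $(a,b)$ pins all $8$ elements, whence $U=A$. This is precisely the consistency failure you anticipated in your ``dependency-aware deletion'': here every element of $A$ supports the unique doubled fiber of some maximal level, so no admissible surplus element exists. Consequently only the ``if'' half of Corollary \ref{condition-of-c-minimal} is provable; the ``only if'' half is simply wrong, and the error propagates to the paper's later invocations of it (for instance, an analogous example with $m_{00},m_{01},m_{10},m_{11}$ arranged in a chain gives an $8$-element c-minimal algebra whose type is a $6$-element chain, contradicting the theorem that c-minimal algebras of totally ordered type have cardinality $|P|+1$).
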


The next c-minimal algebra is essentially constructed in \cite{BB}.
\begin{eg}[{\cite{BB} page 187}]\label{congruence-is-c-minimal}
Let $P$ be a finite poset.
Then the set
\[
\M(P):=\{\alpha\in \{0,1\}^P\mid \alpha(i)=1,\alpha(j)=1,i\neq j\Rightarrow i\not\leq j,j\not\leq i\}
\]
is a neighbourhood of $E_P$.
Moreover, it is a c-minimal algebra of the type $P$.
\end{eg}

\begin{eg}
\mbox{}
\begin{enumerate}
\item
Let $\leq$ be a partial order on $\{1,2,3\}$ such that $1<2<3$.
Then there are two (up to isomorphism) c-minimal algebras of the type $(\{1,2,3\},\leq)$.
That are the following neighbourhoods of $E=\boxtimes_{i=1}^3 P_i$.
\[
\{(0,0,0),(1,0,0),(0,1,0),(0,0,1)\},\ \{(0,0,0),(1,0,0),(0,1,0),(0,1,1)\}.
\]
\item
Let $\leq$ be a partial order on $\{1,2,3\}$ such that $1<2,1<3$ and $2$ and $3$ are incomparable.
Then there are two (up to isomorphism) c-minimal algebras of the type $(\{1,2,3\},\leq)$.
That are the following neighbourhoods of $E=\boxtimes_{i=1}^3 P_i$.
\[
\{(0,0,0),(1,0,0),(0,1,0),(0,0,1),(0,1,1)\},\ \{(0,0,0),(1,0,0),(0,1,0),(1,0,1)\}.
\]
\item \label{1-cotree}
Let $\leq$ be a partial order on $\{1,\dots,n\}$ such that $n$ is a top element and others are pairwise incomparable.
Then there is unique (up to isomorphism) c-minimal algebras of the type $(\{1,\dots,n\},\leq)$.
That is a neighbourhood $\{0,1\}^{n-1}\times\{0\}\cup \{0\}^{n-1}\times\{1\}$ of $E=\boxtimes_{i=1}^n P_i$.

\item
Let $\leq$ be a partial order on $\{1,2,3,4,5\}$ such that $1$ is a bottom, $5$ is a top and $2,3,4$ are
pairwise incomparable, namely, $\{1,2,3,4,5\}$ is isomorphic to $M_3$.
Then there is a $7$-element c-minimal algebra of the type $M_3$,
which is
\[
\{0\}\times \{0,1\}\times\{0,1\}\times \{0\} \times\{0\}\cup
\{0\}\times \{0\}\times \{0\}\times\{0,1\}\times \{0\}\cup
\{(0,0,0,0,1)\}.
\]
However, a c-minimal algebra of the type $(\{2,3,4,5\},\leq\cap\{2,3,4,5\}^2)$
(is unique up to isomorphism and) has $9$($>7$) elements ((\ref{1-cotree}) above.).
\end{enumerate}
\end{eg}

By the description theorems
(Theorem \ref{inductive-description}, \ref{inductive-isomorphism}, and Corollary \ref{condition-of-c-minimal}),
we can assert many properties of c-minimal congruence primal arithmetical algebras. 
At the end of this paper, we describe several extremal cases.

First, we state a technical lemma.
\begin{lem}
Let $P$ be a finite poset, $P'\subset P$ be a downward closed subset.
Let $A'\subset E_{P'}=\{0,1\}^{P'}$ be a c-minimal algebra of the type $P'$.
Then there exists a c-minimal algebra $A\subset E_P=\{0,1\}^P$ of the type $P$
such that $\pi_{P'}(A)=A'$.
\end{lem}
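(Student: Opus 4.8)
The plan is to build $A$ directly by the inductive construction in the remark following Theorem \ref{inductive-description}, copying the fibres of $A'$ on the coordinates of $P'$ and doubling over a single, explicitly chosen predecessor tuple on each new coordinate. Fix a base point $a^*\in A'$ (which exists since $A'\neq\emptyset$), put $N:=P\setminus P'$, and let $z\in\{0,1\}^P$ be the element with $z_j=a^*_j$ for $j\in P'$ and $z_j=0$ for $j\in N$. Processing the elements of $P$ in a linear extension in which every element of $P'$ precedes every element of $N$, I would fix the fibres as follows. For $i\in P'$ I set $A_{i,\bar a}:=A'_{i,\bar a}$; for $i\in N$ I set $A_{i,\bar a^{(i)}}:=\{0,1\}$ with $\bar a^{(i)}:=\pi_{<i}(z)$, and $A_{i,\bar x}:=\{0\}$ for every other $\bar x\in A_{<i}$. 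Let $A:=A_{P}$ be the resulting set; by the cited remark it is a neighbourhood of $E_P$ as soon as every fibre is non-empty and is placed over a genuine element of $A_{<i}$.

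Since $P'$ is downward closed, for $i\in P'$ one has $\{j:j<i\}\subset P'$, so an induction on the construction order (using both construction rules, which reference only smaller downsets) shows that the pieces $A_Q$ for downsets $Q\subset P'$ coincide with those of $A'$; in particular $A_{<i}=A'_{<i}$, which makes the copied fibres well defined. The role of $z$ is to secure non-emptiness and legality of the doubling: a straightforward induction on downsets, using $a^*\in A'$ on $P'$ and $0\in A_{i,\bar x}$ for all $\bar x$ on $N$, shows $\pi_Q(z)\in A_Q$ for every downset $Q$. Hence each $A_{<i}$ is non-empty and $\bar a^{(i)}=\pi_{<i}(z)\in A_{<i}$, so the construction is well defined and $z\in A$.

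Next I would verify $\pi_{P'}(A)=A'$. For the inclusion $A'\subset\pi_{P'}(A)$, given $a'\in A'$ I extend it by zeros on $N$; the same induction as for $z$ (now only using $a'\in A'$ and $0\in A_{i,\bar x}$ for $i\in N$) shows the extension lies in $A$, and it projects to $a'$. For the reverse inclusion, take $x\in A$; for each $i\in P'$ the membership of $x$ gives $x_i\in A_{i,\pi_{<i}(x)}=A'_{i,\pi_{<i}(x)}$, while $\pi_{<i}(x)=\pi_{<i}(\pi_{P'}(x))$ because $\{j:j<i\}\subset P'$, so Theorem \ref{inductive-description} applied to $A'$ yields $\pi_{P'}(x)\in A'$.

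Finally, c-minimality of type $P$ will follow from Corollary \ref{condition-of-c-minimal}: for $i\in P'$ the fibres are exactly those of $A'$, so the unique two-element fibre is inherited from the c-minimality of $A'$, whereas for $i\in N$ the construction gives $|A_{i,\bar a^{(i)}}|=2$ and $|A_{i,\bar x}|=1$ for all other $\bar x$. The main obstacle is not a single deep step but the bookkeeping that keeps the recursion non-circular: one must process $P'$ first so that the copied fibres are defined on the correct domain $A_{<i}=A'_{<i}$, and one must carry the explicit witness $z$ through the non-principal (pullback) construction rule to guarantee that $A_{<i}$ is non-empty at every stage.
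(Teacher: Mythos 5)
Your proposal is correct, and it rests on exactly the same pillars as the paper's proof: Theorem \ref{inductive-description} (the fibre description of neighbourhoods) and Corollary \ref{condition-of-c-minimal}, with the same underlying construction --- each new coordinate $i\in P\sm P'$ gets the fibre $\{0\}$ over every predecessor tuple except a single chosen one, where the fibre is $\{0,1\}$. The only real difference is organizational. The paper first reduces to the case $|P\sm P'|=1$: since $P\sm P'$ is upward closed one can adjoin a single new maximal element at a time, and for one new element the extension is given by a one-line formula whose c-minimality is immediate from Corollary \ref{condition-of-c-minimal}; the general case then follows by iterating, composing the projections. You instead perform the whole extension in one shot, and your base point $z$ (equal to $a^*$ on $P'$, zero on $P\sm P'$) is precisely the device that replaces the paper's induction: in the one-element-at-a-time scheme one always holds a genuine c-minimal algebra, so the doubling point for the next coordinate can be chosen freely in its projection, whereas in the one-shot scheme one must certify that each chosen doubling point $\bar a^{(i)}=\pi_{<i}(z)$ really lies in the partially constructed $A_{<i}$ --- which your induction on downsets (using that $0$ lies in every new fibre) does. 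What the paper's route buys is brevity; what yours buys is a single explicit global description of $A$ and a direct, rather than iterated, verification of $\pi_{P'}(A)=A'$ and of the fibre conditions. Both arguments lean equally on the (unproved) Remark following Theorem \ref{inductive-description} identifying the inductively constructed pieces with the projections of the resulting neighbourhood, so you are not assuming anything the paper does not.
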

\begin{proof}
It is enough to prove for the case $|P\sm P'|=1$.
Let $\{i\}=P\sm P'$ and $a\in \pi_{<i}(A')$.
Then
\[
A:=A'\times\{0\}\cup \{x\in \{0,1\}^P\mid \pi_{<i}(x)=a\}
\]
is a c-minimal algebra of the type $P$ by Corollary \ref{condition-of-c-minimal}.
\end{proof}

\begin{thm}
Let $P$ be a finite poset. Then the following conditions are pairwise equivalent.
\begin{enumerate}
\item
All c-minimal algebras of the type $P$ have cardinality $|P|+1$.
\item
The poset $P$ is totally ordered.
\item
The distributive lattice $\J(P)$ is totally ordered.
\end{enumerate}
\end{thm}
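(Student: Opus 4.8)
The plan is to establish the cycle $(2)\Rightarrow(1)\Rightarrow(2)$ together with the separate equivalence $(2)\Leftrightarrow(3)$, which together give pairwise equivalence. The equivalence $(2)\Leftrightarrow(3)$ is purely order-theoretic and I would settle it first; the cardinality statement $(1)$ is then pinned down by a single telescoping count in the totally ordered case, while its failure in the general case is exhibited by the example $\M(P)$ of Example \ref{congruence-is-c-minimal}.

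For $(2)\Leftrightarrow(3)$: if $P$ is totally ordered, then any subset of size at least two contains two comparable elements, so the only antichains are $\emptyset$ and the singletons; hence $\J(P)$ is the chain $\emptyset<\{p_1\}<\dots<\{p_{|P|}\}$ and is totally ordered. Conversely, if $P$ is not totally ordered, choose incomparable $a,b\in P$; then $\{a\},\{b\}\in\J(P)$, and by the definition of the order on $\J(P)$ we have $\{a\}\not\leq\{b\}$ and $\{b\}\not\leq\{a\}$, so $\J(P)$ is not totally ordered. (Alternatively one may invoke $\Ir(\J(P))\iso P$ and the fact that a finite distributive lattice is a chain exactly when its poset of join irreducibles is.)

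For $(2)\Rightarrow(1)$: when $P=\{1<\dots<n\}$ is a chain, every down-set $\{1,\dots,i\}$ has a unique maximal element, so the inductive construction of the Remark following Theorem \ref{inductive-description} never enters the ``several maximal elements'' case and $A$ is built purely by stacking fibers, $A_{\leq i}=\bigcup_{\bar a\in A_{<i}}\{\bar a\}\times A_{i,\bar a}$. By the fiber condition of Corollary \ref{condition-of-c-minimal}, exactly one fiber has size $2$ and all others have size $1$, so $|A_{\leq i}|=|A_{<i}|+1$. Telescoping from $|A_\emptyset|=1$ gives $|A|=|P|+1$ for every c-minimal $A$, uniformly in the choices made, which is precisely $(1)$. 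For $(1)\Rightarrow(2)$ I would argue by contraposition: if $P$ is not totally ordered it contains an antichain $\{a,b\}$ with $a\neq b$; together with $\emptyset$ and the $|P|$ singletons this exhibits at least $|P|+2$ distinct antichains. Since $\alpha\mapsto\{i\mid\alpha(i)=1\}$ is a bijection from $\M(P)$ onto the set of antichains of $P$ and $\M(P)$ is a c-minimal algebra of type $P$, we obtain $|\M(P)|\geq|P|+2>|P|+1$, so $(1)$ fails.

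The main obstacle is the bookkeeping in $(2)\Rightarrow(1)$: one must verify that the chain hypothesis genuinely forces the construction to remain in the single-maximal-element branch at every stage, and that $\pi_{<i}(A)=A_{\{1,\dots,i-1\}}$, so that the identity $|A_{\leq i}|=|A_{<i}|+1$ is valid and the total is independent of the particular c-minimal algebra chosen. Once the telescoping is justified, the remaining implications are short, with $(1)\Rightarrow(2)$ reducing entirely to the elementary count of antichains of $P$ realized inside $\M(P)$.
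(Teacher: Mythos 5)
Your proposal is correct, and it genuinely differs from the paper in one of the three implications. Your treatment of (2)$\Rightarrow$(1) is essentially the paper's argument: the paper runs an induction on $|P|$, using Corollary \ref{condition-of-c-minimal} to write each c-minimal algebra of chain type as $A'\times\{0\}\cup\{a\}\times\{1\}$, which is exactly your observation that at each level precisely one fibre has size $2$, and your telescoping is that induction; and (2)$\Leftrightarrow$(3), which the paper dismisses as well known, you simply prove directly. The genuine divergence is in (1)$\Rightarrow$(2). The paper argues by contraposition as you do, but constructively: it picks a minimal incomparable pair $(i,j)$, checks that $P'=\{x\in P\mid x<i\}=\{x\in P\mid x<j\}$, so that $\tilde P=P'\cup\{i,j\}$ is downward closed, builds by hand a c-minimal algebra $\tilde A=A'\times\{0\}\times\{0\}\cup\{a\}\times\{0,1\}\times\{0,1\}$ of type $\tilde P$ with $|\tilde A|=|A'|+3>|\tilde P|+1$, and then invokes the preceding technical extension lemma to inflate $\tilde A$ to a c-minimal algebra of type $P$ of cardinality exceeding $|P|+1$. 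You avoid this entire construction by using the ready-made witness $\M(P)$ of Example \ref{congruence-is-c-minimal}: its elements are exactly the indicator functions of the antichains of $P$, and when $P$ is not a chain there are at least $|P|+2$ antichains ($\emptyset$, the $|P|$ singletons, and one incomparable pair), so $|\M(P)|>|P|+1$. This is shorter and eliminates both the minimal-incomparable-pair analysis and the extension lemma; the price is that it rests on the assertion of Example \ref{congruence-is-c-minimal} (that $\M(P)$ really is c-minimal of type $P$), which the paper imports from \cite{BB} without proof --- though it is easily verified from Corollary \ref{condition-of-c-minimal}, since the fibre of $\M(P)$ at $i$ over $\bar a\in \M(P)_{<i}$ has two elements precisely when $\bar a$ is the zero vector (every element of the support of $\bar a$ lies below $i$ and so cannot coexist with $i$ in an antichain). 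Both routes are sound: the paper's is self-contained, while yours buys brevity and a cleaner combinatorial picture --- the cardinality of a c-minimal algebra measured against the count of antichains --- by reusing the example.
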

\begin{proof}
(2) $\Rightarrow$ (1):
We prove by induction on $|P|$.
The case $|P|=0$, c-minimal algebras of the type $P$ are
unique and is the one-element algebra with nullary term operation.

Assume (2) $\Rightarrow$ (1) holds for the case $|P|=n$.
Let $P$ be a totally ordered set with $|P|=n+1$.
We may assume without loss of generality that $P=\{0,\dots,n\}$
with restriction of the canonical order on the set of integers.
By Corollary \ref{condition-of-c-minimal}, each (underlying set of)
c-minimal algebra of the type $P$ is described as
\[
A'\times\{0\}\cup \{a\}\times\{1\}
\]
for some c-minimal algebra $A'$ of the type $P'=\{0,\dots,n-1\}$ and $a\in A'$.
Here, the first and the second entries denote $P'$ and $n$-th components respectively.
By the induction hypothesis, a c-minimal algebra $A'$ of the type
$P'$ has the cardinality $n+1$. Therefore
$|A|=|A'|+1=n+2$.

(1) $\Rightarrow$ (2):
Assume $P$ is not totally ordered.
Let $(i,j)\in P^2$ be a minimal incomparable pair,
that is a pair of elements of $P$ that satisfies
\[
\forall i'\leq i,\forall j'\leq j\ [
(i',j')\text{ are incomparable }\Longleftrightarrow\ (i',j')=(i,j)].
\]
Note that $\{x\in P\mid x<i\}=\{x\in P\mid x<j\}$ holds.
(If $x<i$, then $(x,j)$ is comparable since minimality of $(i,j)$.
If $j\leq x$, then $j\leq x<i$. It contradicts incomparability of $(i,j)$.
Thus $x<j$.)

Put $P'=\{x\in P\mid x<i\}$.
Let $A'\subset E_{P'}$ be a c-minimal algebra of the type $P'$.
Then 
\[
\tilde{A}=A'\times \{0\}\times\{0\} \cup \{a\}\times \{0,1\}\times\{0,1\}
\]
is a c-minimal algebra of the type $\tilde{P}:=P'\cup\{i,j\}$.
Here, the first, the second and the third entries denote $P',i$ and $j$
components respectively.
In this setting,
\[
|\tilde{A}|=|A'|+3> |P'\cup\{i,j\}|+1.
\]
Let $A$ be a c-minimal algebra of the type $P$ such that $\pi_{\tilde{P}}(A)=\tilde{A}$.
Then $|A|\geq |\tilde{A}|+|P\sm\tilde{P}|>|P|+1$.

(2) $\Leftrightarrow$ (3) is well known.
\end{proof}

%\begin{thm}
%Let $P$ be a finite poset. There exists a c-minimal algebra of the type $P$ that has cardinality $|P|+1$ if and only if $P$ is a binary tree, i.e., $P$ has a unique minimum element and $\{x\in P\mid x\leq a\}$ is totally ordered for all $a\in P$.
%\end{thm}

\begin{thm}
Let $P$ be a finite poset. Then the following conditions are pairwise equivalent.
\begin{enumerate}
\item
The algebra $E_P$ is c-minimal.
\item
The poset $P$ is discrete, i.e., distinct elements are incomparable.
\item
The distributive lattice $\J(P)$ is (isomorphic to the underlying lattice of) a Boolean algebra.
\item
The algebra $E_P$ is non-indexed product of primal algebras.
\end{enumerate}
\end{thm}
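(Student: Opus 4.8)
The strategy is to take condition $(2)$, that $P$ is discrete, as a hub and to establish the three equivalences $(2)\Leftrightarrow(3)$, $(2)\Leftrightarrow(1)$ and $(2)\Leftrightarrow(4)$. The equivalence $(2)\Leftrightarrow(3)$ is essentially the folklore duality already recorded above: the poset of join irreducibles of $\J(P)$ is $P$ itself, and a finite distributive lattice is Boolean exactly when its join irreducibles form an antichain (equivalently, when its lattice of down-sets is the full power set). Hence $\J(P)$ is Boolean iff $P$ is an antichain, i.e.\ discrete.

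For $(2)\Leftrightarrow(1)$ I would use the concrete description of neighbourhoods. If $\le$ is the equality relation, then by Example \ref{neighbourhood-1-tree}(1) (taken with $l=1$) every neighbourhood of $E_P=E^{[1]}$ is a product $\prod_{i=1}^n A_i$ with $A_i\subset\{0,1\}$; for such an $A$ each $A_{<i}$ is a one-point set and $A_{i,\bar a}=A_i$, so the criterion of Theorem \ref{characterization-c-minimal} for categorical equivalence to $E$ collapses to $|A_i|\ge 2$, i.e.\ $A_i=\{0,1\}$, for every $i$. Thus the only neighbourhood categorically equivalent to $E_P$ is $E_P$ itself, which is c-minimality. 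For the converse I argue contrapositively with the algebra $\M(P)$ of Example \ref{congruence-is-c-minimal}: its elements are the $0/1$-vectors whose support is an antichain, so if $P$ is not discrete, say $i<j$, the vector supported on $\{i,j\}$ shows $\M(P)\subsetneq E_P$. Since $\Con(E_P)\iso\J({}^rP)$ by Proposition \ref{congruence-latice-of-essential} and $\Ir(\J(X))\iso X$, we get $\Ir(\Con(E_P))\iso{}^rP$, so the type ${}^r(\Ir(\Con(E_P)))$ of $E_P$ equals $P$; as $\M(P)$ is a c-minimal algebra of type $P$, the two algebras have the same type and are therefore categorically equivalent. Hence $\M(P)$ is a proper neighbourhood of $E_P$ categorically equivalent to it, so $E_P$ is not c-minimal.

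Finally, $(2)\Rightarrow(4)$ follows by reading off the explicit form of $\Pol(R_\le)$ (the Remark after Lemma \ref{order-construction}): when $\le$ is equality each coordinate operation $f_i$ depends only on the $i$-th coordinate, so the clone is the coordinatewise one and $E_P$ is the direct product of $n$ copies of the two-element primal algebra. The step I expect to be the main obstacle is the converse $(4)\Rightarrow(2)$, which I plan to handle again through congruence lattices. We always have $\Con(E_P)\iso\J({}^rP)$ by Proposition \ref{congruence-latice-of-essential}; on the other hand a non-indexed product $\prod_j Q_j$ of primal algebras has a Boolean congruence lattice. This last claim is the delicate point: each primal $Q_j$ is simple, so $\Con(Q_j)\iso\mathbf 2$, and because primal algebras are arithmetical (in particular congruence permutable) there are no skew congruences on the product, whence $\Con(\prod_j Q_j)\iso\prod_j\Con(Q_j)\iso\mathbf 2^{k}$, a Boolean lattice. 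Combining the two descriptions, $\J({}^rP)$ is Boolean, so ${}^rP$---and hence $P$---is discrete by the already proved $(2)\Leftrightarrow(3)$. I would be careful here to invoke the no-skew-congruence fact in the precise form valid for finitely many simple factors in an arithmetical variety, since this is the one place where the argument could silently go wrong; note in particular that cover uniqueness alone will not suffice, as both discrete and non-discrete $P$ yield canonical covers by $n$ two-element primals.
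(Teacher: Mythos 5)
Your proof is correct, and its architecture --- using (2) as the hub --- is the same as the paper's. Two legs coincide in substance with the paper's: for (2)$\Rightarrow$(1) the paper also argues that anything categorically equivalent to $E_P$ is a product $\prod_i A_i$ with $|A_i|\geq 2$, so no proper neighbourhood of $E_P$ qualifies; your version, routed through Example \ref{neighbourhood-1-tree}(1) and Theorem \ref{characterization-c-minimal} with $l=1$, is the same argument with the citations made explicit, and (2)$\Leftrightarrow$(3) is dismissed as well known in both. For (1)$\Rightarrow$(2) the paper exhibits, for a comparable pair $i<j$, the concrete proper type-$P$ neighbourhood $\{(a_k)_{k\in P}\mid (a_i,a_j)\neq (1,1)\}$, whereas you reuse $\M(P)$ from Example \ref{congruence-is-c-minimal}; this is the same idea (produce a proper neighbourhood of type $P$, hence categorically equivalent to $E_P$) with a different witness, and your route additionally needs the chain ``same type $\Rightarrow$ isomorphic congruence lattices $\Rightarrow$ categorically equivalent'', which Corollary 4.5 of \cite{BB} and the paper's characterization theorem indeed supply. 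The genuine divergence is (4)$\Rightarrow$(2): the paper dispatches (2)$\Leftrightarrow$(4) with ``directly follows from the definition of $E_P$'', while you give an actual argument, computing the congruence lattice of a product of primal algebras and comparing it with $\Con(E_P)\iso\J({}^r P)$; this fills a real expository gap. One correction there: the absence of skew congruences is \emph{not} a consequence of congruence permutability, as your parenthetical suggests (the group $\Z_2$ is simple and congruence permutable, yet $\Z_2\times\Z_2$ has the diagonal congruence); what does the work is congruence distributivity (the Fraser--Horn property), available here because the coordinatewise combination of the factors' Pixley/discriminator terms is a Pixley term for the product --- your flagged ``precise form valid in arithmetical varieties'' is the right statement, so this is a wording fix rather than a gap. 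Alternatively, you could avoid the general theory entirely: the product clone contains all coordinatewise operations, so by Theorem \ref{pol-inv-finite} its binary invariant relations are exactly the products of binary invariant relations of the simple primal factors, which yields a Boolean congruence lattice directly and stays within the paper's own Galois-connection toolkit.
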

\begin{proof}
(1) $\Rightarrow$ (2): Let $i,j\in P$ and $i<j$.
Then $\{(a_{k})_{k\in P}\in E_P\mid (a_i,a_j)\neq (1,1)\}$
is a type $P$ proper neighbourhood of $E_P$.

(2) $\Rightarrow$ (1):
An algebra $A$ categorically equivalent to $E_P$ is of the form
$A=\prod_{i\in P}A_i$ ($|A_i|\geq 2$ for all $i\in P$) with component-wise operations.
Thus the case $|A_i|=2$ for all $i\in P$, namely, $A=E_P$ is c-minimal.

(2) $\Leftrightarrow$ (3) is well known.

(2) $\Leftrightarrow$ (4) directly follows from the definition of $E_P$.
\end{proof}

\begin{thm}
Let $P$ be a finite poset.
Then the isomorphism classes of c-minimal algebras of the type $P$ are unique
if and only if
$P$ is a depth 1 co-forest, i.e., $|\{x\in P\mid x>a\}|\leq 1$ for all $a\in P$.
\end{thm}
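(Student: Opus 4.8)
The plan is to combine the explicit description of c-minimal algebras in Corollary \ref{condition-of-c-minimal} with the isomorphism criterion of Theorem \ref{inductive-isomorphism}, handling the two implications separately. Throughout I use that, by Corollary \ref{condition-of-c-minimal}, a c-minimal algebra of type $P$ is the same datum as a choice, for each $i\in P$, of a \emph{distinguished point} $d_i\in A_{<i}$ (the unique $\bar a$ with $|A_{i,\bar a}|=2$), all other fibres $A_{i,\bar x}$ being singletons; and that the single value occupying a singleton fibre is irrelevant up to isomorphism, since the bijections $\ph_{i,\bar a}$ of Theorem \ref{inductive-isomorphism} can move it freely.

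For the implication ``$P$ a depth $1$ co-forest $\Rightarrow$ uniqueness'', first note that the hypothesis $|\{x>a\}|\le 1$ forbids both a chain of length $3$ and an element lying below two others, so $P$ is a disjoint union of isolated points and up-stars $\{b_1,\dots,b_k\}<c$ (each leaf minimal, no two apexes sharing a leaf). By Theorem \ref{inductive-description} the defining condition (\ref{neighbourhood-primal}) decouples across connected components, so every neighbourhood, hence every c-minimal algebra, factors as a product over the components; it therefore suffices to treat one component. A point is trivial. For an up-star the leaves force $A_{<c}=\{0,1\}^{\{b_1,\dots,b_k\}}$ to be the full cube, and the free bijections $\ph_{b_i,*}\colon\{0,1\}\to\{0,1\}$ act on this cube as coordinate flips, which are transitive. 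Hence any two choices of the distinguished point $d_c$ are matched by a term-operation isomorphism, so the c-minimal algebra of a component is unique up to isomorphism; taking the product of the component isomorphisms gives uniqueness for all of $P$.

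For the converse I argue contrapositively. If $P$ is not a depth $1$ co-forest then some element lies below at least two others, and passing to a minimal element beneath it shows that in fact some \emph{minimal} element $a\in P$ satisfies $|\{x>a\}|\ge 2$; fix such an $a$ and two distinct $b,c>a$. For a c-minimal algebra $A$ define the boolean invariant
\[
\Theta(A):=\bigwedge\nolimits_{a,\,b\ne c>a}\big[\pi_a(d_b)=\pi_a(d_c)\big],
\]
the conjunction ranging over all minimal $a$ and all pairs $b\neq c$ lying above $a$. Because $a$ is minimal, the $a$-component of any term-operation isomorphism is a \emph{single} bijection $\ph_{a,*}$ applied uniformly to every point; since such an isomorphism carries distinguished points to distinguished points, it preserves each equality $\pi_a(d_b)=\pi_a(d_c)$, so $\Theta$ is invariant under the isomorphisms of Theorem \ref{inductive-isomorphism}, and the universal quantification makes it invariant under poset automorphisms as well. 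Finally I exhibit two c-minimal algebras of distinct $\Theta$: the algebra with all distinguished points and all singleton values equal to $\bar 0$ has $\Theta=\text{true}$, whereas setting $d_c=\bar 0$ but choosing $d_b$ with $a$-coordinate $1$ (possible because $a$ is minimal, so its fibre is $A_{a,*}=\{0,1\}$ and some point of $A$ has $a$-coordinate $1$) yields, via Corollary \ref{condition-of-c-minimal}, a c-minimal algebra with $\Theta=\text{false}$. These cannot be isomorphic, so the c-minimal algebras of type $P$ are not unique.

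The main obstacle is exactly the isomorphism-invariance of $\Theta$ against \emph{all} isomorphisms rather than only the term-operation ones: a poset automorphism may permute the bad configurations, and an isomorphism based at a \emph{non-minimal} $a$ would transform the two $a$-coordinates by possibly different bijections $\ph_{a,\pi_{<a}(d_b)}$ and $\ph_{a,\pi_{<a}(d_c)}$, which would break the comparison. Both difficulties are dissolved by the two design choices above, namely placing the witness at a minimal element and quantifying $\Theta$ over all such witnesses; after that, the only remaining work is the routine inductive verification (on the poset $P$) that the two displayed subsets indeed satisfy (\ref{neighbourhood-primal}) and the singleton-fibre condition of Corollary \ref{condition-of-c-minimal}.
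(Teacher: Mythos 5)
Your proof is correct in substance, and it takes a genuinely different route from the paper in both directions, most markedly in the converse. For ``depth $1$ co-forest $\Rightarrow$ uniqueness'' the paper inducts on $|P|$, removing a maximal element $i$ and using that in a depth $1$ co-forest the elements below $i$ are incomparable to everything else, so the restriction splits as a product; your decomposition into connected components (isolated points and up-stars), followed by transitivity of coordinate flips on the cube $\{0,1\}^{\{b_1,\dots,b_k\}}$, is the same phenomenon organized globally rather than by peeling off one maximal element at a time. It is slightly cleaner, but your one-word claim that condition (\ref{neighbourhood-primal}) ``decouples'' across components deserves the same short verification that the paper gives for its product splitting $A'=\pi_Q(A')\times\{0,1\}^{P_{<i}}$. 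For ``uniqueness $\Rightarrow$ depth $1$ co-forest'', the paper separates $\M(P)$ from a second algebra by its property (*), which compares the sets of $(y_j,y_k)$-extensions of two elements of $A_{\leq i}$ agreeing below $i$; building the second algebra there requires a case split on whether $j,k$ are comparable and the technical lemma on extending c-minimal algebras from downward closed subposets. Your invariant $\Theta$ --- equality of the $a$-coordinates of the distinguished points $d_b,d_c$, with the witness pushed down to a \emph{minimal} element $a$ --- buys real simplification: minimality of $a$ forces every term-operation isomorphism of Theorem \ref{inductive-isomorphism} to act on the $a$-coordinate by a single uniform bijection, so invariance of each conjunct is immediate, no case analysis on $b,c$ is needed, and the $\Theta$-false algebra is produced in one step from Corollary \ref{condition-of-c-minimal} (your $\Theta$-true algebra is in fact exactly $\M(P)$).

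One caveat applies equally to your argument and to the paper's: both conclude non-isomorphism from an invariant that is verified to be preserved only by term-operation isomorphisms and by coordinate permutations induced by automorphisms of $P$. That \emph{every} isomorphism of non-indexed algebras between two c-minimal algebras of type $P$ factors as a composite of these two kinds is used implicitly; the paper asserts the analogous statement only in a worked example and never proves it in general, and its property (*), being phrased via the coordinate projections, is no more manifestly isomorphism-invariant than your $\Theta$. You at least flag this issue explicitly and design $\Theta$ (minimal witness, universal quantification) so that both kinds of isomorphisms visibly preserve it, so your proof is at the same level of rigor as the paper's and somewhat shorter.
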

\begin{proof}
We write $P_{<l}:=\{x\in P\mid x<l\}$ and $P_{\leq l}:=\{x\in P\mid x\leq l\}$ for $l\in P$.

Depth $1$ co-forest $\Rightarrow$ Uniqueness:
We prove by induction on $|P|$.
The case $P=\emptyset$ is clear.

Let $P$ be a poset and $i\in P$ be a maximal element of $P$.
Then $P'=P\sm\{i\}$ is a downward closed set and a depth 1 co-forest.
Put $Q=\{x\in P'\mid x\not< i\}\neq\emptyset$.

By the induction hypothesis, c-minimal algebras of the type $P'$
are unique up to isomorphism. Let $A'\in{\cal N}(E_{P'})$ be a c-minimal algebra of the type $P'$.

%\textbf{Case 1.} $\{x\in P\mid x<i\}= \emptyset$.
%
%$A=A'\times \{0,1\}\subset {\cal N}(E_P)$ is the unique c-minimal algebra of the type $P$ up to isomorphism.
%
%\textbf{Case 2.} $\{x\in P\mid x<i\}\neq \emptyset$.

By the assumption that $P$ is a depth 1 co-forest, each $j<i$ is incomparable to any elements of $P'=P\sm\{i\}$.
Thus, $A'$ can be written as
\[
A'=\pi_Q(A')\times \{0,1\}^{P_{<i}}.
\]
Therefore, a c-minimal neighbourhood of $E_P$ of the type $P$ is described as in the form
\[
A_{a_0,\eps}=\bigcup_{a\in \{0,1\}^{P_{<i}}}(\pi_Q(A')\times\{a\}\times \{\eps_a\})
 \cup (\pi_Q(A')\times\{a_0\}\times \{0,1\}),
\]
where $\eps_a\in\{0,1\}$ for $a\in\{0,1\}^{P_{<i}}$, and $a_0\in \pi_{<i}(A')=\{0,1\}^{P_{<i}}$.
Here, the first, the second and the third entries denotes $Q$, $P_{<i}$ and $i$-th components.
By this description, it is easily verified that the structure of $A_{a_0,\eps}$ does not depend on $a_0$ or $\eps_a$.
Thus, the structure of c-minimal algebras of the type $P$ is unique.

Uniqueness $\Rightarrow$ Depth 1 co-forest:
Let $P$ be a poset that is not a depth 1 co-forest.
We show that there is a c-minimal algebra that is not isomorphic to $\M(P)$ in Example \ref{congruence-is-c-minimal}.
First, notice that $\M(P)$ satisfies the following property:

(*) For $i,j,k\in P$ and $a,b\in A_{\leq i}$ such that $i<j,i<k,j\neq k$ and $\pi_{<i}(a)=\pi_{<i}(b), a\neq b$,
one of the sets
\[
\{(x_1,x_2)\in\{0,1\}^2\mid \exists(y_l)_{l\in P}; (y_j,y_k)=(x_1,x_2),(y_l)_{l\leq i}=a\}
\]
or
\[
\{(x_1,x_2)\in\{0,1\}^2\mid \exists(y_l)_{l\in P}; (y_j,y_k)=(x_1,x_2),(y_l)_{l\leq i}=b\}
\]
is a singleton.

Thus it is enough to show that there is a c-minimal algebra of the type $P$ that does not satisfy (*).

Let $i,j,k\in P$ such that $i<j,i<k,j\neq k$, $A_1\subset \{0,1\}^{P_{\leq i}}$ be a c-minimal algebra of the type $P_{\leq i}$ and $a_{<i}\in \{0,1\}^{P_{<i}}$ be the element that satisfies
$(a_{<i},0),(a_{<i},1)\in A_1$.

\textbf{Case 1}. $j$ and $k$ are comparable.

We may assume $j<k$.
Let $A_2'$ be a c-minimal algebra of the type $P_{<j}$ such that $\pi_{\leq i}(A_2')=A_1$.
Let $b\in A_2'$ be an element such that $\pi_{\leq i}(b)=(a_{<0},0)$ and define
\[
A_2:=A_2'\times\{0\}\cup \{x\in \{0,1\}^{P_{<j}}\mid \pi_{<j}(x)=\pi_{<j}(b)\}\times\{1\}.
\]
Here, the first and the second entries denotes $P_{<j}$ and $j$-th components.
Then $A_2\subset \{0,1\}^{P_{\leq j}}$ is a c-minimal algebra of the type $P_{\leq j}$.

Next, let $A_3'$ be a c-minimal algebra of the type $P_{<k}$ such that $\pi_{\leq j}(A_3')=A_2$, $c\in A_3'$ such that $\pi_{\leq i}(c)=(a_{<i},1)$
and define
\[
A_3:=A_3'\times \{0\}\cup \{x\in\{0,1\}^{P_{<k}}\mid \pi_{<k}(x)=\pi_{<k}(c)\}\times\{1\}.
\]
Here, the first and the second entries denotes $P_{<k}$ and $k$-th components.
Then $A_3$ is a c-minimal algebra of the type $P_{\leq k}$.
Finally, let $A$ be a c-minimal algebra of the type $P$ that $\pi_{\leq k}(A)=A_3$. Then $A$ does not satisfy Condition (*).

\textbf{Case 2}. $j$ and $k$ are incomparable.

Put $Q_2':=P_{<j}\cup P_{<k}$ and $Q_2:=Q_2'\cup\{j,k\}$.
Let $A_2'$ be a c-minimal algebra of the type $Q_2'$ such that $\pi_{<i}(A_2')=A_1$.
Let $b\in \pi_{<j}(A_2')$ and $c\in\pi_{<k}(A_2')$ be elements that satisfy
$\pi_{<i}(b)=(a_{<i},0),\pi_{<i}(c)=(a_{<i},1)$.
We define
\[
A_2:=\{(x_l)_{l\in Q_2}\in \{0,1\}^{Q_2}\mid
(x_j,x_k)=(0,0)\text{ or }(x_l)_{l<j}=b \text{ or }(x_l)_{l<k}=c \}.
\]
Then $A_2$ is a c-minimal algebra of the type $Q_2$, since $A_2\subset \{0,1\}^{Q_2}=E_{Q_2}$ satisfies the condition stated in Corollary \ref{condition-of-c-minimal}.
Let $A$ be a c-minimal algebra of the type $P$ such that $\pi_{P_2}(A)=A_2$. Then $A$ does not satisfy Condition (*).
\end{proof}

\end{document}